\newtheorem{theorem}{Theorem}[section]
\newtheorem{proposition}[theorem]{Proposition}
\begin{document}

\begin{center}
{\Large  Finite energy solutions and critical conditions of
nonlinear equations in $R^n$}
\end{center}

\vskip 5mm

\begin{center}

{\sc Yutian Lei} \\
\vskip 3mm
Institute of Mathematics\\
School of Mathematical Sciences\\
Nanjing Normal University\\
Nanjing, 210023, China\\
email: leiyutian@njnu.edu.cn

\end{center}

\vskip 5mm {\leftskip5mm\rightskip5mm \normalsize
\noindent{\bf{Abstract}}
This paper is concerned with the critical conditions of nonlinear
elliptic equations with weights and the corresponding integral equations with Riesz
potentials and Bessel potentials. We show that the equations and some energy functionals
are invariant under the scaling transformation if and only if the critical
conditions hold. In addition, the Pohozaev identity shows that those critical
conditions are the necessary and sufficient conditions for
existence of the finite energy positive solutions or weak solutions.
Finally, we discuss respectively the existence of
the negative solutions of the $k$-Hessian equations in the subcritical case,
critical case and supercritical case. Here the Serrin exponent and the
critical exponent play key roles.

\par
\noindent{\bf{Keywords}}: critical exponents,
finite energy solution, Riesz potential,
Bessel potential, k-Hessian equation.

\par
{\bf{MSC2010}} 35B09, 35G20, 35J48, 45E10, 45G05}

\section{Introduction}  

In this paper, we consider the relation between the critical
conditions and the finite energy solutions for several semilinear,
quasilinear and fully nonlinear elliptic equations.

If $n \geq 3$, and $u$ belongs to the homogeneous
Sobolev space $\mathcal{D}^{1,2}(R^n)$ such that the Sobolev inequality holds
\begin{equation} \label{svi}
\|u\|_{L^{q+1}(R^n)}^2 \leq C\|\nabla u\|_{L^2(R^n)}^2,
\end{equation}
then $q=\frac{n+2}{n-2}$. In fact, after
the scaling transformation
\begin{equation} \label{scal}
u_\mu(x):=\mu^{\sigma}u(\mu x), \quad \mu>0,
\end{equation}
by (\ref{svi}) we can see that
$
\|u_\mu\|_{L^{q+1}(R^n)}^2 \leq C\mu^{n-2-\frac{2n}{q+1}}\|\nabla
u_\mu\|_{L^2(R^n)}^2.
$
Since $u_\mu$ also satisfies (\ref{svi}), $q$ must be equal to
$\frac{n+2}{n-2}$.

The Euler-Lagrange equation which the extremal function of (\ref{svi})
satisfies is the Lane-Emden equation
\begin{equation} \label{el2}
-\Delta u=u^{q}, \quad u>0 ~in~ R^n.
\end{equation}
Eq. (\ref{el2}) and the energy $\|u\|_{L^{q+1}(R^n)}$ are
invariant under the scaling transformation if and only if
$q=\frac{n+2}{n-2}$. In fact, $u_\mu$
solves (\ref{el2}) implies $\sigma=\frac{2}{q-1}$.
The energy $\|u_\mu\|_{L^{q+1}(R^n)}=\|u\|_{L^{q+1}(R^n)}$
implies $\sigma=\frac{n}{q+1}$. Thus, $q=\frac{n+2}{n-2}$.
On the contrary, if $q=\frac{n+2}{n-2}$, (\ref{el2}) is
invariant under the conformal transformation.

The critical exponent $\frac{n+2}{n-2}$ plays the key roles on the
existence and nonexistence of this Lane-Emden equation. We
refer to \cite{GS} by Gidas and Sprunk for details.

The solution $u$ is called a finite energy solution if $u \in L^{q+1}(R^n)$.
It is not difficult to verify that $u
\in C^\infty(R^n) \cap \mathcal{D}^{1,2}(R^n)$ is equivalent to $u
\in C^\infty(R^n) \cap L^{q+1}(R^n)$. In addition, $\|\nabla
u\|_{L^2(R^n)}=\|u\|_{L^{q+1}(R^n)}$.
The classification result by Chen and Li \cite{CL-Duke} shows that
(\ref{el2}) has the finite energy solutions if and only if
$q=\frac{n+2}{n-2}$. On the contrary, all the solutions of (\ref{el2})
in the critical case are the finite energy solution.

Next, we consider the Lane-Emden system
\begin{equation} \label{le2}
 \left \{
   \begin{array}{l}
      -\Delta u=v^{q_2}, ~u,v>0~in~R^n,\\
      -\Delta v=u^{q_1}, ~q_1,q_2>1.
   \end{array}
   \right.
\end{equation}
Instead of the critical exponent $q=\frac{n+2}{n-2}$,
the critical condition which $q_1,q_2$ satisfy
is
\begin{equation} \label{cc}
\frac{1}{q_1+1}+\frac{1}{q_2+1}=1-\frac{2}{n}.
\end{equation}
It also comes into play in the study of the existence for (\ref{le2}). When
$\frac{1}{q_1+1}+\frac{1}{q_2+1} \leq \frac{n-2}{n}$, the existence of
classical positive solutions had been verified by Mitidieri, Serrin and Zou
(cf. \cite{Mi}, \cite{SZ-1998}). Nonexistence of
positive solution is still open when
$\frac{1}{q_1+1}+\frac{1}{q_2+1}>\frac{n-2}{n}$ except for the
case of $n \leq 4$ (cf. \cite{Souplet}). This Liouville type
property is the well known Lane-Emden conjecture.

All the results above can be generalize to an
integral equation involving the Reisz potential
(cf. \cite{CLO} and \cite{YLi})
\begin{equation} \label{hls}
    u(x)= \displaystyle\int_{R^n}\frac{u^{q}(y)dy}{|x-y|^{n-\alpha}},
\end{equation}
where $n\geq 3$, $\alpha \in (0,n)$, $q>0$.
It is also invariant under the conformal transformation.
For the weighted equations, such as the Hardy-Sobolev type, the
Caffarelli-Kohn-Nirenberg type and the weighted
Hardy-Littlewood-Sobolev type, the invariant is still true under the scaling
transformation. However,
the invariant is absent under the translation. On the other hand,
for the equations involving the Bessel potentials,
the invariant is true under the translation, but false under the scaling.

The following system corresponding (\ref{hls}) is related to the study of the extremal
functions of the Hardy-Littlewood-Sobolev inequality (cf. \cite{L})
\begin{equation} \label{Hls}
 \left \{
   \begin{array}{l}
   u(x)= \displaystyle\int_{R^n}\frac{v^{q_2}(y)dy}{|x-y|^{n-\alpha}}\\
   v(x)= \displaystyle\int_{R^n}\frac{u^{q_1}(y)dy}{|x-y|^{n-\alpha}},
   \end{array}
   \right.          
\end{equation}
Recently, \cite{LeiLi} shows that the Euler-Lagrange system (\ref{Hls})
and energy functionals $\|u\|_{L^{q_1+1}(R^n)}$ and $\|v\|_{L^{q_2+1}(R^n)}$
are invariant under the scaling transformation
\begin{equation} \label{scalsy}
u_\mu(x)=\mu^{\sigma_1}u(\mu x), \quad
v_\mu(x)=\mu^{\sigma_2}v(\mu x),
\end{equation}
if and only if the condition $\frac{1}{q_1}+\frac{1}{q_2}=1-\frac{\alpha}{n}$
holds. In addition, (\ref{Hls}) has finite energy solutions if and only
if $q_1,q_2$ satisfy such a critical condition. However, it is open
whether or not all positive solutions in the critical
case are the finite energy solutions.

In this paper, we always assume $n \geq 3$, $q,q_1,q_2>1$. We expect to generalize
the argument above to other nonlinear equations, including higher order and
fractional order semilinear equations, p-Laplace equation and system, and
k-Hessian equations.

In Section 2, we point out the relation between finite energy
solutions and weak solutions, and prove that the critical conditions are the
necessary and sufficient conditions for the existence of the finite energy
solutions of the equations involving the Riesz potentials. For the equations
involving the Bessel potentials, we prove that subcritical conditions are the
necessary conditions for the existence of finite energy solutions. This shows
the corresponding energy functional has no minimizer in critical case. We
present the minimum by the least energy whose Euler-Lagrange equation involves
the Riesz potential (cf. Theorem \ref{th3.3}).

In Section 3, we study the Caffarelli-Kohn-Nirenberg
type p-Laplacian equation and system, and surprisingly find that the critical condition of
the system is degenerate to two simple cases when we investigate the invariant
of the system and the energy functionals under the scaling transformation:
either $p=2$, or the system is reduced to a single equation (cf. Theorem \ref{th4.7}).
Unfortunately, the system has no variational structure, and hence we cannot use the Pohozaev
identity to verify whether or not there exists a nondegenerate critical condition determining
the existence of the finite energy solutions.

Finally, in Section 4, we study a k-Hessian equation. We present the nonexistence of negative
solution when the exponent is smaller than the Serrin exponent. In addition,
we find a radial solution with slow decay rate in the supercritical case
(cf. Theorem \ref{th4.10}), and
another radial solution with fast decay rate in the critical case. Based on this
result, we prove the critical condition is the necessary and sufficient condition
for the existence of finite energy solutions (cf. Theorem \ref{th4.5}).

\section{Semilinear equations}

\subsection{Hardy-Sobolev type equations}

We search the values of $q$ such that the classical Hardy-Sobolev
inequality holds
\begin{equation} \label{hsi}
(\int_{R^n}|x|^{-t}u^{q+1}dx)^{\frac{2}{q+1}} \leq
C_n\int_{R^n}|\nabla u|^2dx,
\end{equation}
for all $u \in \mathcal{D}^{1,2}(R^n)$. Here $n \geq 3$, $t \in
(0,2)$.

In order to verify this inequality still holds for $u_\mu$ (cf.
(\ref{scal})), we have
$$\begin{array}{ll}
&(\displaystyle\int_{R^n}|x|^{-t}u_\mu^{q+1}(x)dx)^{\frac{1}{q+1}}
=\mu^{\sigma-\frac{n-t}{q+1}}(\displaystyle\int_{R^n}
|y|^{-t}u^{q+1}(y)dy)^{\frac{1}{q+1}}\\[3mm]
&\leq
C_n\mu^{\sigma-\frac{n-t}{q+1}}(\displaystyle\int_{R^n}|\nabla
u(y)|^2dy)^{1/2} \leq C_n \mu^{\frac{n-2}{2}-\frac{n-t}{q+1}}
(\displaystyle\int_{R^n}|\nabla u_\mu(x)|^2dx)^{1/2},
\end{array}
$$
and hence $\frac{n-2}{2}-\frac{n-t}{q+1}=0$, which implies
$q=\frac{n+2-2t}{n-2}$.

The extremal functions in $\mathcal{D}^{1,2}(R^n) \setminus \{0\}$ of
(\ref{hsi}) can be obtained by investigating the functional
$$
E(u)=\|\nabla
u\|_{L^2(R^n)}^2(\displaystyle\int_{R^n}|x|^{-t}
u^{q+1}dx)^{\frac{-2}{q+1}}.
$$

Consider the Euler-Lagrange equation
\begin{equation} \label{hsel}
-\Delta u=|x|^{-t}u^q, \quad u>0~in ~R^n.
\end{equation}
In view of
$
-\Delta u_\mu(x)=-\mu^{\sigma+2}\Delta u(\mu
x)=\mu^{\sigma+2-t-q\sigma}|x|^{-t}u_\mu^q(x),
$
we can see that $\sigma=\frac{2-t}{q-1}$ if and only if $u_\mu$
solves (\ref{hsel}). In addition, noting
\begin{equation} \label{energy}
\int_{R^n}|x|^{-t}u_\mu^{q+1}(x)dx
=\mu^{\sigma(q+1)}\displaystyle\int_{R^n}|x|^{-t}u^{q+1}(\mu
x)dx
=\mu^{\sigma(q+1)-n+t}\displaystyle\int_{R^n}|y|^{-t}u^{q+1}(y)dy,
\end{equation}
we can see that $\sigma=\frac{n-t}{q+1}$ if and only if the energy
$\||x|^{\frac{-t}{q+1}}u\|_{L^{q+1}(R^n)}$ is invariant under the
scaling (\ref{scal}). Eliminating $\sigma$ we also obtain that $q$
is the critical exponent $\frac{n+2-2t}{n-2}$.

\begin{theorem} \label{th2.3}
Eq. (\ref{hsel}) has a weak solution in $\mathcal{D}^{1,2}(R^n)$ if and only if
$q=\frac{n+2-2t}{n-2}$.
\end{theorem}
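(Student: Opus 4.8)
The plan is to prove the two implications separately: necessity of the exponent $q=\frac{n+2-2t}{n-2}$ via a Pohozaev identity, and sufficiency via the variational characterization of the extremal functions of the Hardy-Sobolev inequality (\ref{hsi}). The elimination of the dilation exponent that already appears in the discussion preceding the statement is exactly the mechanism that will force the critical value of $q$ in the necessity part.

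For necessity, suppose $u \in \mathcal{D}^{1,2}(R^n)$ is a nontrivial positive weak solution of (\ref{hsel}). First I would record the energy identity obtained by testing the equation against $u$ itself,
\begin{equation*}
\int_{R^n}|\nabla u|^2\,dx = \int_{R^n}|x|^{-t}u^{q+1}\,dx,
\end{equation*}
which is legitimate because $u \in \mathcal{D}^{1,2}(R^n)$ and the Hardy-Sobolev embedding make both sides finite. Next I would test against the generator of dilations $x\cdot\nabla u$; the integrations by parts on a ball $B_R$, using $\nabla\cdot(|x|^{-t}x)=(n-t)|x|^{-t}$ on the right and the identity for $\int_{B_R}\Delta u\,(x\cdot\nabla u)\,dx$ on the left, give, up to surface integrals over $\partial B_R$,
\begin{equation*}
\frac{n-2}{2}\int_{B_R}|\nabla u|^2\,dx = \frac{n-t}{q+1}\int_{B_R}|x|^{-t}u^{q+1}\,dx + (\text{boundary terms}).
\end{equation*}
Letting $R\to\infty$ and comparing the resulting Pohozaev relation with the energy identity, I divide the two (the integrals are strictly positive for a nontrivial positive solution) to obtain $\frac{n-2}{2}=\frac{n-t}{q+1}$, i.e. $q=\frac{n+2-2t}{n-2}$.

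The delicate point, and the step I expect to be the main obstacle, is justifying that the boundary contributions vanish; because the weight $|x|^{-t}$ is singular at the origin, I must also control a shrinking sphere $\partial B_\rho$ and not only the sphere at infinity. The mechanism is that $h(R):=\int_{\partial B_R}(|\nabla u|^2+|x|^{-t}u^{q+1})\,dS$ is integrable in $R$ on $(0,\infty)$, since its integral equals $\int_{R^n}(|\nabla u|^2+|x|^{-t}u^{q+1})\,dx<\infty$; hence one can select radii $R_k\to\infty$ and $\rho_k\to 0$ with $R_k\,h(R_k)\to 0$ and $\rho_k\,h(\rho_k)\to 0$, along which every boundary term (each being of size $R\,h(R)$) disappears. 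To make $x\cdot\nabla u$ an admissible test object I would first invoke standard interior elliptic regularity for (\ref{hsel}) away from the origin, so that $u\in C^2(R^n\setminus\{0\})$ and the integrations by parts are valid after mollification; the required integrability of $x\cdot\nabla u$ against the equation then follows from the finiteness of the two energies.

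For sufficiency, when $q=\frac{n+2-2t}{n-2}$ the equation (\ref{hsel}) is precisely the Euler-Lagrange equation of the functional $E$ introduced before the statement, after the normalization fixed by the scaling $u\mapsto u_\mu$ of (\ref{scal}). It therefore suffices to produce a minimizer of $E$ over $\mathcal{D}^{1,2}(R^n)\setminus\{0\}$. Existence of such an extremal for the Hardy-Sobolev inequality in the range $t\in(0,2)$ is classical: radial decreasing rearrangement reduces the problem to a compact one-dimensional minimization, and the extremal can in fact be written explicitly as a radial profile of the form $(1+|x|^{2-t})^{-(n-2)/(2-t)}$ up to scaling. The minimizer may be taken nonnegative (replace it by its modulus) and nontrivial, so by the strong maximum principle it is strictly positive; after rescaling to absorb the Lagrange multiplier it is a positive weak solution of (\ref{hsel}). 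This closes the equivalence.
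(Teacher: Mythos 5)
Your necessity argument and the paper's rest on the same dilation-invariance mechanism, but they execute it at different levels. You derive the classical PDE-level Pohozaev identity: test against the dilation generator $x\cdot\nabla u$, integrate by parts over annuli, and kill the boundary contributions at $0$ and at $\infty$ along radii selected via the integrability of $h(R)$; then compare with the energy identity. The paper instead works at the level of the functional: a weak solution is a critical point of the Rayleigh quotient $E(u)=\|\nabla u\|_{L^2(R^n)}^2\bigl(\int_{R^n}|x|^{-t}u^{q+1}dx\bigr)^{-2/(q+1)}$, and the exact scaling law $E(u(\cdot/\mu))=\mu^{n-2-\frac{2(n-t)}{q+1}}E(u)$ forces the exponent $n-2-\frac{2(n-t)}{q+1}$ to vanish, since $E(u)>0$. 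The paper's route needs no integration by parts, no boundary terms, and no pointwise regularity of $u$; your route makes explicit the analysis that the one-line computation hides, but it pays for this with your step ``$u\in C^2(R^n\setminus\{0\})$ by interior regularity,'' which is not automatic for supercritical $q$ (weak solutions of supercritical problems need not be locally bounded), whereas the functional argument sidesteps this entirely. Your sufficiency is essentially identical to the paper's: the radial profile $(1+|x|^{2-t})^{-(n-2)/(2-t)}$ you mention is exactly the explicit solution (\ref{clsf}), so the rearrangement/compactness machinery you invoke is dispensable.

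There is, however, one genuine flaw: your justification of the energy identity is circular. You assert that $\int_{R^n}|x|^{-t}u^{q+1}dx$ is finite ``because of the Hardy-Sobolev embedding,'' but inequality (\ref{hsi}) holds precisely when $q=\frac{n+2-2t}{n-2}$ --- which is what the necessity direction is trying to prove; for a general $q$ there is no a priori bound of the weighted integral by $\|\nabla u\|_{L^2(R^n)}^2$. This is not cosmetic, because finiteness of that integral is what makes your function $h(R)$ integrable on $(0,\infty)$, which is the engine of your boundary-term selection. The repair is standard and is exactly what the paper does in Proposition \ref{prop2.2}(1): test the equation against $u$ on balls $B_R(0)$, control $\int_{\partial B_R(0)}u\,\partial_\nu u\,ds$ by H\"older along radii $R_j\to\infty$ with $R_j\int_{\partial B_{R_j}(0)}(|\nabla u|^2+u^{2n/(n-2)})\,ds\to 0$ (such radii exist because $u\in\mathcal{D}^{1,2}(R^n)$ and the unweighted Sobolev embedding is exponent-independent), and conclude $\int_{R^n}|x|^{-t}u^{q+1}dx=\int_{R^n}|\nabla u|^2dx<\infty$ by monotone convergence. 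With that established first, and the regularity caveat above addressed or bypassed, the rest of your argument goes through.
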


\begin{proof}
In fact, if $q=\frac{n+2-2t}{n-2}$, the radial function
\begin{equation} \label{clsf}
u(x)=c(\frac{d}{d^2+|x|^{2-t}})^{\frac{n-2}{2-t}}
\end{equation}
belongs to $\mathcal{D}^{1,2}(R^n)$ and solves (\ref{hsel}). Here $c,d>0$.

On the contrary, since the weak solution is a critical point of the
functional $E(u)$, we have the Pohozaev identity
$
[\frac{d}{d\mu}E(u(\frac{x}{\mu}))]_{\mu=1}=0.
$
Noting
$
E(u(\frac{x}{\mu}))=\mu^{n-2-\frac{2(n-t)}{q+1}}E(u(x)),
$
we get $q=\frac{n+2-2t}{n-2}$.
\end{proof}

Clearly, if $q=\frac{n+2-2t}{n-2}$, then $u \in
\mathcal{D}^{1,2}(R^n)$ implies $|x|^{\frac{-t}{q+1}}u \in
L^{q+1}(R^n)$ by the Hardy-Sobolev inequality. A natural question
is, for a general exponent $q$, when the energy
$\||x|^{\frac{-t}{q+1}}u\|_{L^{q+1}(R^n)}$ is finite.

\begin{proposition} \label{prop2.2}
(1) If $u \in C^2(R^n) \cap \mathcal{D}^{1,2}(R^n)$ solves (\ref{hsel}), then
$\||x|^{\frac{-t}{q+1}}u\|_{L^{q+1}(R^n)}^{q+1}<\infty$. In addition,
$\||x|^{\frac{-t}{q+1}}u\|_{L^{q+1}(R^n)}^{q+1}=\|\nabla u\|_{L^{2}(R^n)}^2$.

(2) Assume $u \in C^2(R^n)$ solves (\ref{hsel}) and
$\int_{R^n}|x|^{-t}u^{q+1}dx<\infty$. If $u \in L^{\frac{2n}{n-2}}(R^n)$, then $u
\in \mathcal{D}^{1,2}(R^n)$, and $\int_{R^n}|\nabla u|^2dx
=\int_{R^n}|x|^{-t}u^{q+1}dx$.
\end{proposition}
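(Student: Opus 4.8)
The plan is to derive both parts from a single cut-off integration-by-parts identity obtained by testing (\ref{hsel}) against $u$ localized near infinity. Let $\eta_r \in C_c^\infty(R^n)$ be a standard cut-off with $\eta_r \equiv 1$ on $B_r$, $\eta_r \equiv 0$ outside $B_{2r}$, and $|\nabla \eta_r| \leq C/r$. Since $u \in C^2$, the function $\phi = u\eta_r^2$ is an admissible compactly supported test function, so multiplying (\ref{hsel}) by $\phi$ and integrating by parts (there are no boundary terms) yields
$$\int_{R^n} |\nabla u|^2 \eta_r^2 \, dx + 2\int_{R^n} u\,\eta_r \,\nabla u \cdot \nabla \eta_r \, dx = \int_{R^n} |x|^{-t} u^{q+1} \eta_r^2 \, dx.$$
The weight $|x|^{-t}$ enters only in the right-hand integrand, which is finite on each $B_{2r}$ because $t<2<n$ makes $|x|^{-t}$ locally integrable while $u$ is bounded on compacta; the integration by parts itself involves only $-\Delta u$ and $\nabla u$ and is therefore unproblematic.

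The decisive estimate is the control of the cross term. By Cauchy-Schwarz and the bound on $\nabla\eta_r$,
$$\Bigl|2\int u\,\eta_r \,\nabla u \cdot \nabla\eta_r \,dx\Bigr| \leq 2\Bigl(\int |\nabla u|^2 \eta_r^2\,dx\Bigr)^{1/2}\Bigl(\frac{C}{r^2}\int_{B_{2r}\setminus B_r} u^2\,dx\Bigr)^{1/2}.$$
Applying H\"older with the exponents $2^\ast/2 = n/(n-2)$ (where $2^\ast=2n/(n-2)$) and its conjugate $n/2$ to the last factor, and using $|B_{2r}\setminus B_r|^{2/n} \leq C r^2$, the quantity $r^{-2}\int_{B_{2r}\setminus B_r} u^2$ is dominated by $\varepsilon_r := C\bigl(\int_{B_{2r}\setminus B_r} u^{2^\ast}\bigr)^{(n-2)/n}$. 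Since $u \in L^{2^\ast}(R^n)$ --- which holds in part (1) via the Sobolev embedding $\mathcal{D}^{1,2}(R^n)\hookrightarrow L^{2^\ast}(R^n)$ and is assumed outright in part (2) --- the tail $\varepsilon_r \to 0$ as $r\to\infty$, and $\varepsilon_r$ is bounded uniformly in $r$ by $C\|u\|_{L^{2^\ast}}^{2}$.

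For part (1) we additionally have $\nabla u \in L^2$, so $\int |\nabla u|^2\eta_r^2 \to \|\nabla u\|_{L^2}^2$ (monotone convergence) while the cross term, bounded by $2\|\nabla u\|_{L^2}\varepsilon_r^{1/2}$, tends to $0$. Hence the left-hand side converges to the finite limit $\|\nabla u\|_{L^2}^2$, whereas the right-hand side increases monotonically (its integrand is nonnegative) to $\int_{R^n}|x|^{-t}u^{q+1}\,dx$. Equating the two limits gives simultaneously the finiteness $\||x|^{-t/(q+1)}u\|_{L^{q+1}}^{q+1}=\int_{R^n}|x|^{-t}u^{q+1}\,dx<\infty$ and the asserted identity.

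For part (2) the gradient integrability is not yet known, so the same identity must be read as a quadratic inequality for $A_r := \int |\nabla u|^2\eta_r^2$. Writing $M := \int_{R^n}|x|^{-t}u^{q+1}\,dx < \infty$, the identity and the cross-term bound give $A_r \leq 2 A_r^{1/2}\varepsilon_r^{1/2} + M$, whence $A_r^{1/2} \leq \varepsilon_r^{1/2} + (\varepsilon_r + M)^{1/2}$. Because $\varepsilon_r$ is bounded uniformly in $r$, this produces a uniform bound $\int_{B_r}|\nabla u|^2\,dx \leq A_r \leq C$, and letting $r\to\infty$ shows $\nabla u \in L^2$, i.e. $u \in \mathcal{D}^{1,2}(R^n)$. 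With gradient integrability now in hand, the argument of part (1) applies verbatim to pass to the limit and recover $\int_{R^n}|\nabla u|^2\,dx = \int_{R^n}|x|^{-t}u^{q+1}\,dx$. I expect the genuine obstacle to lie exactly in this part-(2) bootstrap: one must first extract a uniform $L^2$ bound on $\nabla u$ from the quadratic inequality before the limit passage becomes legitimate, rather than presupposing $\nabla u \in L^2$.
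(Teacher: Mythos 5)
Your proposal is correct, and its part (2) is essentially the paper's proof: the paper tests against $u\zeta_R^2$, splits the cross term by Young's inequality ($\tfrac14$-absorption into the gradient term plus $C\int u^2|\nabla\zeta_R|^2$), bounds the latter by H\"older against the $L^{2n/(n-2)}$ norm exactly as you do, and concludes a uniform bound on $\int|\nabla u|^2\zeta_R^2$ before letting $R\to\infty$; your quadratic-inequality step $A_r \le 2A_r^{1/2}\varepsilon_r^{1/2}+M$ is just an equivalent way of performing that absorption, both resting on the finiteness of $A_r$ for fixed $r$ (from $u\in C^2$). Where you genuinely diverge is part (1): the paper integrates on balls $B_R(0)$ and must kill the flux term $\int_{\partial B_R}u\,\partial_\nu u\,ds$, which it does by a co-area/averaging argument selecting radii $R_j\to\infty$ with $R\int_{\partial B_R}\bigl(|\nabla u|^2+u^{2n/(n-2)}\bigr)ds\to 0$, followed by H\"older on the sphere. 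You instead reuse the cutoff identity and kill the cross term via the tail decay $\varepsilon_r\to 0$ of the $L^{2n/(n-2)}$ norm (Sobolev embedding of $\mathcal{D}^{1,2}$), so no boundary integrals or sequence selection ever appear; your treatment is more uniform across the two parts and slightly cleaner, while the paper's sphere identity has the side benefit that it is literally the identity re-invoked at the end of its part (2). Two small points to tighten: the monotone convergence of the right-hand side needs $\eta_r$ monotone in $r$ (true for the standard radial cutoff $\eta(x/r)$, but worth saying — nonnegativity of the integrand alone does not give monotonicity of the integrals), and the pointwise equation degenerates at $x=0$ because of the weight, so the integration by parts should formally be done on $B_{2r}\setminus B_\delta$ with $\delta\to 0$ (the inner boundary term vanishes since $u\in C^2$); the paper glosses over this as well.
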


\begin{proof}
(1) Multiplying (\ref{hsel}) by $u$ and integrating on
$B_R(0)$, we have
\begin{equation} \label{fushi}
\int_{B_R(0)}|\nabla u|^2dx-\int_{\partial B_R(0)}u
\partial_{\nu}uds =\int_{B_R(0)}|x|^{-t}u^{q+1}dx.
\end{equation}
By virtue of $u \in \mathcal{D}^{1,2}(R^n)$, there exists $R=R_j \to
\infty$ such that
\begin{equation} \label{changwei}
R\int_{\partial B_R(0)}(|\nabla u|^2+u^{\frac{2n}{n-2}})ds \to 0.
\end{equation}
By this result and the H\"older inequality, we get
$$
|\int_{\partial B_R(0)}u \partial_{\nu}uds| \leq
\|\partial_{\nu}u\|_{L^2(\partial
B_R(0))}\|u\|_{L^{\frac{2n}{n-2}}(\partial B_R(0))} |\partial
B_R(0)|^{\frac{1}{2}-\frac{n-2}{2n}} \to 0
$$
when $R=R_j \to \infty$. Inserting this result into (\ref{fushi}),
we can see $\int_{R^n}\frac{u^{q+1}dx}{|x|^t} =\int_{R^n}|\nabla
u|^2dx$.

(2)
On the contrary, take smooth function $\zeta(x)$ satisfying
$$
\left \{
   \begin{array}{lll}
&\zeta(x)=1, \quad &for~ |x| \leq 1;\\
&\zeta(x) \in [0,1], \quad &for~ |x| \in [1,2];\\
&\zeta(x)=0, \quad &for~ |x| \geq 2.
   \end{array}
   \right.
$$
Define the cut-off function
\begin{equation} \label{cut}
\zeta_R(x)=\zeta(\frac{x}{R}).
\end{equation}
Multiplying (\ref{hsel}) by $u\zeta_R^2$ and integrating on
$B_{2R}(0)$, we have
\begin{equation} \label{heyue}
\int_{B_{2R}(0)}|\nabla u|^2 \zeta_R^2dx
=2\int_{B_{2R}(0)}u\zeta_R\nabla u \nabla \zeta_R dx +\int_{B_{2R}(0)}
|x|^{-t}u^{q+1} \zeta_R^2dx.
\end{equation}
Clearly, there exists $C>0$ which is independent of $R$, such that
$$
|\int_{B_{2R}(0)}u\zeta_R\nabla u \nabla \zeta_R dx| \leq
\frac{1}{4}\displaystyle\int_{B_{2R}(0)}|\nabla u|^2 \zeta_R^2dx
+C\int_{B_{2R}(0)}u^2|\nabla \zeta_R|^2dx.
$$
If $u \in L^{\frac{2n}{n-2}}(R^n)$, there holds
$$
\int_{B_{2R}(0)}u^2|\nabla \zeta_R|^2dx \leq
\frac{C}{R^2}(\int_{B_{2R}(0)}u^{\frac{2n}{n-2}}dx)^{1-\frac{2}{n}}
|B_{2R}(0)|^{\frac{2}{n}} \leq C.
$$
Inserting these results into (\ref{heyue}) and noting
$\int_{R^n}|x|^{-t}u^{q+1}dx<\infty$, we get
$
\int_{B_{2R}(0)}|\nabla u|^2 \zeta_R^2dx \leq C,
$
where $C>0$ is independent of $R$. Letting $R \to \infty$, we have
$\nabla u\in L^2(R^n)$, and hence $u \in \mathcal{D}^{1,2}(R^n)$.
Thus, (\ref{changwei}) still
holds, and from (\ref{fushi}) we also deduce $\int_{R^n}|\nabla
u|^2dx =\int_{R^n}|x|^{-t}u^{q+1}dx$.
\end{proof}

The positive solution $u \in C^2(R^n)$ is called a finite energy
solution of (\ref{hsel}), if
$$
\int_{R^n}|x|^{-t}u^{q+1}dx<\infty.
$$
In the critical case $q=\frac{n+2-2t}{n-2}$, (\ref{clsf}) is a finite
energy solution. On the contrary, if (\ref{hsel}) has a finite energy
solution with $q\leq \frac{n+2-2t}{n-2}$, then Proposition \ref{prop2.2}
and Theorem \ref{th2.3} imply $q=\frac{n+2-2t}{n-2}$.

\vskip 5mm

The argument above can be generalized to the higher order system
involving two coupled equations
\begin{equation} \label{hssy}
 \left \{
   \begin{array}{l}
      (-\Delta)^{l} u=|x|^{-t}v^{q_2}, ~u>0~in~R^n,\\
      (-\Delta)^{l} v=|x|^{-t}u^{q_1}, ~v>0~in~R^n.
   \end{array}
   \right.
\end{equation}
Here $l\in [1,n/2)$ is an integer.

\begin{proposition} \label{prop2.4}
Under the scaling transformation (\ref{scalsy}), the equation (\ref{hssy}) and
the energy functionals
$\||x|^{\frac{-t}{q_1+1}}u\|_{L^{q_1+1}(R^n)}$ and
$\||x|^{\frac{-t}{q_2+1}}v\|_{L^{q_2+1}(R^n)}$ are invariant, if and only if
$q_1$ and $q_2$ satisfy the critical condition
\begin{equation} \label{criticalcondition}
\frac{1}{q_1+1}+\frac{1}{q_2+1}=\frac{n-2l}{n-t}.
\end{equation}
\end{proposition}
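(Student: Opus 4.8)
The plan is to follow exactly the scaling bookkeeping carried out for the single equation (\ref{hsel}), now tracking two exponents $\sigma_1,\sigma_2$ and reading off four algebraic constraints which, after eliminating $\sigma_1$ and $\sigma_2$, collapse to the single relation (\ref{criticalcondition}). Since the system (\ref{hssy}) is symmetric in the two equations, the whole argument has a mirror structure, and the content is ultimately the solvability of a small linear system in the scaling exponents.

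First I would compute how each equation transforms. Since $u_\mu(x)=\mu^{\sigma_1}u(\mu x)$ gives $(-\Delta)^l u_\mu(x)=\mu^{\sigma_1+2l}[(-\Delta)^l u](\mu x)$, substituting the first equation of (\ref{hssy}) together with $v(\mu x)=\mu^{-\sigma_2}v_\mu(x)$ yields $(-\Delta)^l u_\mu(x)=\mu^{\sigma_1+2l-t-\sigma_2 q_2}|x|^{-t}v_\mu^{q_2}(x)$, so invariance of the first equation forces $\sigma_1+2l-t=\sigma_2 q_2$. The symmetric computation on the second equation gives $\sigma_2+2l-t=\sigma_1 q_1$. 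Next, repeating the energy computation (\ref{energy}) for $u_\mu$ and $v_\mu$, the change of variables $y=\mu x$ produces $\int_{R^n}|x|^{-t}u_\mu^{q_1+1}dx=\mu^{\sigma_1(q_1+1)+t-n}\int_{R^n}|y|^{-t}u^{q_1+1}dy$ and its analogue for $v_\mu$, so invariance of the two energy functionals is equivalent to $\sigma_1=\frac{n-t}{q_1+1}$ and $\sigma_2=\frac{n-t}{q_2+1}$.

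For the forward implication I would combine these four relations. Adding $\sigma_2$ to the first equation constraint and $\sigma_1$ to the second shows $\sigma_1(q_1+1)=\sigma_2(q_2+1)=\sigma_1+\sigma_2+2l-t$; feeding in the energy values $\sigma_i(q_i+1)=n-t$ reduces this to $\sigma_1+\sigma_2=n-2l$. Substituting $\sigma_i=\frac{n-t}{q_i+1}$ then gives $(n-t)\left(\frac{1}{q_1+1}+\frac{1}{q_2+1}\right)=n-2l$, which is exactly (\ref{criticalcondition}).

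For the converse I would run the algebra backwards: assuming (\ref{criticalcondition}), define $\sigma_1:=\frac{n-t}{q_1+1}$ and $\sigma_2:=\frac{n-t}{q_2+1}$, so the two energy constraints hold by construction, and then verify the two equation constraints. Writing $\sigma_2 q_2=(n-t)\frac{q_2}{q_2+1}=(n-t)-\frac{n-t}{q_2+1}$, the constraint $\sigma_1+2l-t=\sigma_2 q_2$ rearranges precisely to $(n-t)\left(\frac{1}{q_1+1}+\frac{1}{q_2+1}\right)=n-2l$, i.e. (\ref{criticalcondition}); the second constraint follows by the same manipulation with indices swapped. Thus all four relations hold simultaneously, establishing invariance. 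I do not anticipate a genuine obstacle here, since the work is an elimination in a linear system of scaling exponents; the only subtlety is confirming that the two equation constraints and the two energy constraints are mutually consistent exactly on the critical locus, which the symmetric structure of (\ref{hssy}) guarantees.
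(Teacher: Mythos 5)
Your proposal is correct and follows essentially the same route as the paper: the same four scaling constraints (two from the equations, two from the energy functionals) followed by elimination of $\sigma_1,\sigma_2$, with your converse making explicit the choice $\sigma_i=\frac{n-t}{q_i+1}$ that the paper leaves implicit in its remark that ``the calculation above still implies the sufficiency.'' The only cosmetic difference is the elimination bookkeeping: the paper passes through $\frac{q_1q_2-1}{(q_1+1)(q_2+1)}=\frac{2l-t}{n-t}$ and an algebraic identity, while you pass through $\sigma_1+\sigma_2=n-2l$, which yields the same condition.
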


\begin{proof}
Set $y=\mu x$. By (\ref{scalsy}) and (\ref{hssy}), we have
$$
(-\Delta)^lu_\mu(x)=\mu^{\sigma_1+2l}(-\Delta)^lu(y)
=\mu^{\sigma_1+2l}|y|^{-t}v^{q_2}(y)
=\mu^{\sigma_1+2l-t-q_2\sigma_2}|x|^{-t}v_\mu^{q_2}(x).
$$
Eq. (\ref{hssy}) is invariant under the scaling (\ref{scalsy})
implies $\sigma_1+2l-t-q_2\sigma_2=0$ and
$\sigma_2+2l-t-q_1\sigma_1=0$. By the same derivation of
(\ref{energy}), we also obtain $\sigma_1(q_1+1)-n+t=0$ and
$\sigma_2(q_2+1)-n+t=0$ by the invariant of
$\||x|^{\frac{-t}{q_1+1}}u\|_{L^{q_1+1}(R^n)}$ and
$\||x|^{\frac{-t}{q_2+1}}v\|_{L^{q_2+1}(R^n)}$. Eliminating
$\sigma_1$ and $\sigma_2$, we can see
$
\frac{q_1q_2-1}{(q_1+1)(q_2+1)}=\frac{2l-t}{n-t}.
$
In view of $q_1q_2-1=(q_1+1)(q_2+1)-(q_1+1)-(q_2+1)$, it follows
(\ref{criticalcondition}).

On the contrary, the calculation above still implies the sufficiency.
\end{proof}

It seems difficult to generalized this process to the system
involving $m$ equations with $m \geq 3$. How to obtain the
critical conditions of the system involving $m$ equations is an
interesting problem.

The classical solutions $u,v$ of (\ref{hssy}) are called
{\it finite energy solutions} if
$$
\||x|^{\frac{-t}{q_1+1}}u\|_{L^{q_1+1}(R^n)}<\infty, \quad
\||x|^{\frac{-t}{q_2+1}}v\|_{L^{q_2+1}(R^n)}<\infty.
$$

\begin{theorem} \label{th2.5}
Eq. (\ref{hssy}) has finite energy solutions
if and only if (\ref{criticalcondition}) holds.
\end{theorem}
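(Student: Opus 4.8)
The plan is to mirror the single-equation treatment (Theorem \ref{th2.3} together with Proposition \ref{prop2.2}) by exploiting the variational structure of (\ref{hssy}). I would introduce the bilinear Dirichlet form $A(u,v)=\int_{R^n}D^lu\,D^lv\,dx$, where $D^l$ is chosen so that $A(u,v)=\int_{R^n}v(-\Delta)^lu\,dx=\int_{R^n}u(-\Delta)^lv\,dx$ (concretely $D^l=\Delta^{l/2}$ when $l$ is even and $D^l=\nabla\Delta^{(l-1)/2}$ when $l$ is odd, with signs arranged accordingly), together with the energy functional
$$
I(u,v)=A(u,v)-\frac{1}{q_1+1}\int_{R^n}|x|^{-t}u^{q_1+1}dx-\frac{1}{q_2+1}\int_{R^n}|x|^{-t}v^{q_2+1}dx,
$$
whose critical points are exactly the solutions of (\ref{hssy}). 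Write $B_1=\int_{R^n}|x|^{-t}u^{q_1+1}dx$ and $B_2=\int_{R^n}|x|^{-t}v^{q_2+1}dx$ for the two finite energies.

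For the necessity direction I would extract two families of identities. Pairing the first equation of (\ref{hssy}) with $v$ and the second with $u$ and integrating by parts gives the Nehari-type identities $A(u,v)=B_2$ and $A(u,v)=B_1$, so in particular $B_1=B_2=A(u,v)=:E$. Next, the pure dilation $u_\mu(x)=u(x/\mu)$, $v_\mu(x)=v(x/\mu)$ scales the three terms of $I$ by $\mu^{n-2l}$, $\mu^{n-t}$, $\mu^{n-t}$ respectively; since $(u,v)$ is a critical point, $\frac{d}{d\mu}I(u_\mu,v_\mu)|_{\mu=1}=0$ yields the Pohozaev identity $(n-2l)A(u,v)=(n-t)\bigl(\frac{1}{q_1+1}B_1+\frac{1}{q_2+1}B_2\bigr)$. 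Substituting $A=B_1=B_2=E>0$ collapses this to $(n-2l)=(n-t)\bigl(\frac{1}{q_1+1}+\frac{1}{q_2+1}\bigr)$, which is exactly (\ref{criticalcondition}).

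For the sufficiency direction I would, as in Theorem \ref{th2.3}, produce a finite energy solution in the critical case. Since the fundamental solution of $(-\Delta)^l$ is a multiple of $|x|^{-(n-2l)}$, the system (\ref{hssy}) in the critical case is equivalent to the integral system with Riesz potentials discussed in this section, which is the Euler-Lagrange system of a weighted Hardy-Littlewood-Sobolev (Stein-Weiss) inequality; its extremal functions exist and, being maximizers, automatically lie in the spaces making $B_1$ and $B_2$ finite, giving the desired finite energy solution. When $q_1=q_2$ one may instead exhibit directly the explicit radial bubble generalizing (\ref{clsf}).

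The main obstacle is the rigorous justification of the integrations by parts in the higher-order case $l\geq 2$. Both the Nehari identities and the Pohozaev identity require that the boundary integrals over $\partial B_R(0)$ — now involving $u$, $v$ and their derivatives up to order $2l-1$ — vanish along a suitable sequence $R=R_j\to\infty$. As in (\ref{changwei}), I would obtain this by first upgrading the finite energy hypothesis to pointwise decay of $u$, $v$ and their derivatives via the integral representation and interior elliptic estimates, and then choosing $R_j$ so that the integrable boundary densities tend to zero. Controlling the full hierarchy of derivative boundary terms produced by $(-\Delta)^l$, rather than the single term $\int_{\partial B_R(0)}u\,\partial_\nu u\,ds$ of the second-order case, is the technically delicate part.
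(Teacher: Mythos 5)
Your sufficiency argument is essentially the paper's: in the critical case one takes Lieb's extremal pair $(U,V)\in L^{q_1+1}\times L^{q_2+1}$ for the weighted (Stein--Weiss) Hardy--Littlewood--Sobolev inequality \cite{L}, chooses $\beta_1(q_1+1)=\beta_2(q_2+1)=t$, and sets $u=|x|^{\beta_1}U$, $v=|x|^{\beta_2}V$ to get a finite energy solution of the integral system (\ref{kunsha}) and hence of (\ref{hssy}). That half is fine.

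The necessity half, however, has a genuine unresolved gap --- the very one you flag in your last paragraph. Your Nehari identities $A(u,v)=B_1=B_2$ and the dilation Pohozaev identity $(n-2l)A=(n-t)\bigl(\frac{B_1}{q_1+1}+\frac{B_2}{q_2+1}\bigr)$ are only formal: the finite energy hypothesis controls $\int_{R^n}|x|^{-t}u^{q_1+1}dx$ and $\int_{R^n}|x|^{-t}v^{q_2+1}dx$ and nothing else. It does not give $A(u,v)<\infty$, it does not place $(u,v)$ in a function space on which $I$ is differentiable (so ``critical point $\Rightarrow$ derivative along the dilation path vanishes'' is not available: $x\cdot\nabla u$ is not an admissible variation), and it gives no control of the boundary integrals over $\partial B_R(0)$ involving derivatives of $u,v$ up to order $2l-1$ that appear when $(-\Delta)^l$ is integrated by parts. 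For $l=1$ the paper closes this kind of gap in Proposition \ref{prop2.2} only under an \emph{additional} integrability hypothesis; for $l\geq 2$ your proposed remedy (pointwise decay of all derivatives via ``the integral representation'') is precisely the missing work, and the representation itself requires the super polyharmonic property.

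This is exactly where the paper takes a different route designed to avoid your obstacle. It first invokes the equivalence result of \cite{CL-cpaa} to replace the differential system (\ref{hssy}) by the integral system (\ref{kunsha}), and then runs the Pohozaev identity in \emph{integral form} (after \cite{CDM}): differentiating $u(\mu x)=\mu^{2l-t}\int_{R^n}\frac{v^{q_2}(\mu z)dz}{|z|^t|x-z|^{n-2l}}$ at $\mu=1$ gives $x\cdot\nabla u=(2l-t)u+\int_{R^n}\frac{z\cdot\nabla v^{q_2}(z)dz}{|z|^t|x-z|^{n-2l}}$; multiplying by $|x|^{-t}u^{q_1}$, applying Fubini, and integrating by parts once produces only the boundary term $R^{1-t}\int_{\partial B_R(0)}(u^{q_1+1}+v^{q_2+1})ds$, which vanishes along a sequence $R_j\to\infty$ \emph{solely} because of the finite energy assumption; no derivative of $u$ or $v$ ever reaches a boundary. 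Combined with the symmetry identity (\ref{equa}) (the analogue of your $B_1=B_2$), this yields (\ref{criticalcondition}). So the passage to the integral equation is not a cosmetic choice: it is the mechanism that converts the uncontrollable hierarchy of higher-order boundary terms into a single term controlled by the hypothesis, and without it (or a fully developed decay theory for $u,v$ and their derivatives) your necessity argument does not close.
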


\begin{proof}
When $\frac{1}{q_1+1}+\frac{1}{q_2+1}=\frac{n-2l+\beta_1+\beta_2}{n}$,
Lieb \cite{L} obtained a pair of extremal functions $(U,V) \in
L^{q_1+1}(R^n) \times L^{q_2+1}(R^n)$ of the weighted
Hardy-Littlewood-Sobolev inequality, which
solves the integral system
$$
 \left \{
   \begin{array}{l}
      U(x) = \displaystyle\frac{1}{|x|^{\beta_1}}\int_{R^{n}} \frac{V^{q_2}(y)}{|y|^{\beta_2}|x-y|^{n-2l}}  dy\\
      V(x) = \displaystyle\frac{1}{|x|^{\beta_2}}\int_{R^{n}} \frac{U^{q_1}(y)}{|y|^{\beta_1}|x-y|^{n-2l}}
      dy.
   \end{array}
   \right.
$$
If (\ref{criticalcondition}) is true, we can choose $\beta_1$ and
$\beta_2$ satisfying $\beta_1(q_1+1)=\beta_2(q_2+1)=t$. Taking
$u(x)=|x|^{\beta_1}U(x)$ and $v(x)=|x|^{\beta_2}V(x)$, we can see
that $(u,v)$ solves
\begin{equation} \label{kunsha}
 \left \{
   \begin{array}{l}
      u(x) = \displaystyle\int_{R^{n}} \frac{v^{q_2}(y)}{|y|^{t}|x-y|^{n-2l}}  dy\\
      v(x) = \displaystyle\int_{R^{n}} \frac{u^{q_1}(y)}{|y|^{t}|x-y|^{n-2l}}
      dy.
   \end{array}
   \right.
\end{equation}
In addition, according to the radial symmetry and integrability results (cf.
\cite {CJ}, \cite{CJ2}) and the asymptotic behavior of $(U,V)$ (cf.
\cite{LLM}), $u$ and $v$ are finite energy solutions. By the properties of the
Riesz potentials, it follows that $(u,v)$ solves (\ref{hssy})
from (\ref{kunsha}).

On the contrary, according to the equivalence results in \cite{CL-cpaa},
the classical solutions of (\ref{hssy}) also satisfy (\ref{kunsha}).
In the following, we use the Pohozaev identity of integral forms
introduced in \cite{CDM} to deduce (\ref{criticalcondition}).

By (\ref{kunsha}) we have
\begin{equation} \label{equa}
\begin{array}{ll}
&\displaystyle\int_{R^n}\frac{u^{q_1+1}(x)}{|x|^t}dx=\int_{R^n}\frac{u^{q_1}(x)}{|x|^t}
\int_{R^{n}} \frac{v^{q_2}(y)}{|y|^{t}|x-y|^{n-2l}}  dydx\\[3mm]
&=\displaystyle\int_{R^n}\frac{v^{q_2}(y)}{|y|^{t}}
\int_{R^{n}} \frac{u^{q_1}(x)}{|x-y|^{n-2l}}  dx dy
=\int_{R^n}\frac{v^{q_2+1}(y)}{|y|^t}dy.
\end{array}
\end{equation}
For $\mu>0$, from (\ref{kunsha}) it follows
$$
x \cdot \nabla u(x)=\frac{d}{d\mu}u(\mu x)|_{\mu=1}
=(2l-t)u(x)+\int_{R^n}\frac{z \cdot \nabla v^{q_2}(z)dz}{|z|^t|x-z|^{n-2l}}
$$
Multiplying by $|x|^{-t}u^{q_1}(x)$ and integrating on $R^n$, we get
\begin{equation} \label{duanwu}
\begin{array}{ll}
&\displaystyle\int_{R^n}\frac{u^{q_1}(x)}{|x|^t}x \cdot \nabla u(x)dx
-(2l-t)\int_{R^n}\frac{u^{q_1+1}(x)}{|x|^t}dx\\[3mm]
&=\displaystyle\int_{R^n}\frac{u^{q_1}(x)}{|x|^t}
\int_{R^n}\frac{z \cdot \nabla v^{q_2}(z)dz}{|z|^t|x-z|^{n-2l}}dx\\[3mm]
&=\displaystyle\int_{R^n}\frac{z \cdot \nabla v^{q_2}(z)}{|z|^t}
\int_{R^n}\frac{u^{q_1}(x)}{|x|^t|z-x|^{n-2l}}dxdz
=\int_{R^n}\frac{z \cdot \nabla v^{q_2}(z)}{|z|^t}v(z)dz.
\end{array}
\end{equation}
Since $u,v$ are finite energy solutions, we can find $R=R_j \to \infty$
such that
$$
R^{1-t}\int_{\partial B_R(0)}(u^{q_1+1}+v^{q_2+1})ds \to 0.
$$
Thus, integrating (\ref{duanwu}) by parts yields
$$
(\frac{t-n}{q_1+1}-2l+t)\int_{R^n}\frac{u^{q_1+1}}{|x|^t}dx
=\frac{q_2(t-n)}{q_2+1}\int_{R^n}\frac{v^{q_2+1}}{|x|^t}dx.
$$
Combining with (\ref{equa}) we obtain (\ref{criticalcondition}).
\end{proof}

\paragraph{Remark 2.1.}
We have two direct corollaries:

(1) If $l=1$, then (\ref{hssy}) has finite energy solutions
if and only if $\frac{1}{q_1+1}+\frac{1}{q_2+1}=\frac{n-2}{n-t}$.

(2) If $q_1=q_2$ and $u=v$, then (\ref{hssy}) has finite energy solution
if and only if $q_1=\frac{n+2l-2t}{n-2l}$.

Noting the conditions in Theorems \ref{th2.3} and \ref{th2.5}, it is
convenient for us to discuss the finite energy solutions for integral equations,
and the weak solutions in $\mathcal{D}^{l,2}(R^n)$ for the differential equations
respectively.

\subsection{WHLS type integral system}

Let $1 < r, s < \infty$, $0 <\lambda < n$, $\beta_1+\beta_2 \geq 0$
and $\beta_1+\beta_2 \leq \alpha$. The weighted Hardy-Littlewood-Sobolev
(WHLS) inequality states that (cf. \cite{SW2})
\begin{equation} \label{Whls}
   \left|\int_{R^n}\int_{R^n}
   \frac{f(x)g(y)}{|x|^{\beta_1}|x-y|^{n-\alpha}|y|^{\beta_2}}dxdy
   \right| \le C_{\beta_1,\beta_2,s,\alpha,n }\|f\|_r \|g\|_s
\end{equation}
where $1 - \frac{1}{r} - \frac{n-\alpha}{n} < \frac{\beta_1}{n} < 1 -
\frac{1}{r}$.
If the inequality (\ref{Whls}) still holds for the scaling functions
$f_\mu$ and $g_\mu$ (cf. (\ref{scalsy})), then we can deduce
\begin{equation} \label{B1}
\frac{1}{r} + \frac{1}{s} +
\frac{n-\alpha + \beta_1+\beta_2}{n} = 2 .
\end{equation}

In order to obtain the sharp constant in the WHLS inequality
(\ref{Whls}), we maximize the functional
$$
    J(f,g) = (\|f\|_r \|g\|_s)^{-1} \int_{R^n}\int_{R^n}\frac{f(x)g(y)}{|x|^{\beta_1}|x-y|^{n-\alpha}|y|^{\beta_2}} dx dy.
$$
The corresponding
Euler-Lagrange equations are the following integral system:
$$
 \left \{
   \begin{array}{l}
      \lambda_1 r {f(x)}^{r-1} =\displaystyle \frac{1}{|x|^{\beta_1}}\int_{R^{n}} \frac{g(y)}{|y|^{\beta_2}|x-y|^{n-\alpha}}  dy,\\
      \lambda_2 s {g(x)}^{s-1} =\displaystyle \frac{1}{|x|^{\beta_2}}\int_{R^{n}} \frac{f(y)}{|y|^{\beta_1}|x-y|^{n-\alpha}}  dy,
   \end{array}
   \right.
$$
where $f, g \geq 0$, and $\lambda_1 r =\lambda_2 s =J(f,g)$.

If $f \in L^r(R^n)$ and $g \in L^s(R^n)$, then the Pohozaev identity
$\frac{dJ(f(x\mu^{-1}),g(x\mu^{-1}))}{d\mu}|_{\mu=1}=0$ still implies (\ref{B1}).
In fact, for $\mu > 0$,
$$
J(f(\frac{x}{\mu}),g(\frac{x}{\mu}))=\mu^{2n-(n-\alpha+\beta_1+\beta_2)-\frac{n}{r}
-\frac{n}{s}}J(f(x),g(x)).
$$
Thus, $\frac{dJ(f(x/\mu),g(x/\mu))}{d\mu}|_{\mu=1}=0$ leads to (\ref{B1}).

Let $u = c_1 f^{r-1}$, $v = c_2 g^{s-1}$, and $q_1 =\frac{1}
{r-1}$, $q_2 = \frac{1}{s-1}$. Choosing suitable $c_1$ and $c_2$,
we obtain that the corresponding
Euler-Lagrange equations are the following integral system
\begin{equation}
 \left \{
   \begin{array}{l}
      u(x) = \displaystyle\frac{1}{|x|^{\beta_1}}\int_{R^{n}} \frac{v^{q_2}(y)}{|y|^{\beta_2}|x-y|^{n-\alpha}}  dy\\
      v(x) = \displaystyle\frac{1}{|x|^{\beta_2}}\int_{R^{n}} \frac{u^{q_1}(y)}{|y|^{\beta_1}|x-y|^{n-\alpha}}  dy
   \end{array}
   \right.\label{ELWHLS}          
 \end{equation}
 where $\beta_1+\beta_2 \leq \alpha$, and
     \begin{equation} \label{YYY}
     \left \{
   \begin{array}{l}
   u, v \geq 0, \;\; 0 <q_1, q_2<\infty,\;
    0 <\alpha < n, \; \beta_1 \geq 0,\beta_2  \geq 0,\; \\
  \displaystyle \frac{\beta_1}{n}  <
  \frac{1}{q_1+1}<\frac{n-\alpha+\beta_1}{n},\;  \frac{\beta_2}{n}  <
  \frac{1}{q_2+1}<\frac{n-\alpha+\beta_2}{n}.
\end{array}
   \right.
   \end{equation}

The equation (\ref{ELWHLS}) and the energy functionals
$\|u\|_{L^{q_1+1}(R^n)}$, $\|v\|_{L^{q_2+1}(R^n)}$ are invariant under the scaling transformation
(\ref{scalsy}), if and only if
\begin{equation} \label{ELWHLScond2}
\frac{1}{q_1+1}+\frac{1}{q_2+1}=\frac{n-\alpha+\beta_1+\beta_2}{n}.
\end{equation}
Clearly, (\ref{ELWHLScond2}) is equivalent to (\ref{B1}).

Since $(f,g) \in L^r(R^n) \times L^s(R^n)$ implies $(u,v) \in
L^{q_1+1}(R^n) \times L^{q_2+1}(R^n)$, we call such a pair of solutions $(u,v)$
the {\it finite energy solutions}. In addition, (\ref{ELWHLScond2}) is called
the {\it critical condition}.

\begin{theorem} \label{th2.2}
Eq. (\ref{ELWHLS}) has the finite energy
positive solutions in $C_{loc}^1(R^n \setminus \{0\})$
if and only if $p,q$ satisfy the critical
condition (\ref{ELWHLScond2}).
\end{theorem}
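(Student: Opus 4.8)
The plan is to establish the two implications separately, following the template of Theorem \ref{th2.5}: the sufficiency will rest on the extremal functions of the WHLS inequality, and the necessity on an integral-form Pohozaev identity. For the sufficiency, suppose (\ref{ELWHLScond2}) holds; since it is equivalent to (\ref{B1}), the WHLS inequality (\ref{Whls}) is invariant under the scaling (\ref{scalsy}), and under the assumptions (\ref{YYY}) its sharp constant is attained by a pair of nonnegative extremal functions $(f,g)\in L^r(R^n)\times L^s(R^n)$ (cf. \cite{SW2}, \cite{L}). These extremizers satisfy the Euler--Lagrange equations of $J$, and after the substitution $u=c_1f^{r-1}$, $v=c_2g^{s-1}$ with $q_1=\frac{1}{r-1}$, $q_2=\frac{1}{s-1}$ and suitable constants $c_1,c_2$, they solve (\ref{ELWHLS}). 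Because $(r-1)(q_1+1)=r$ and $(s-1)(q_2+1)=s$, one gets $\int_{R^n}u^{q_1+1}dx=c_1^{q_1+1}\|f\|_r^r$ and $\int_{R^n}v^{q_2+1}dx=c_2^{q_2+1}\|g\|_s^s$, both finite; hence $(u,v)$ is a finite energy positive solution, whose $C_{loc}^1(R^n\setminus\{0\})$ regularity away from the weight singularity follows from the regularity theory for Riesz-potential integral systems (cf. \cite{CJ}, \cite{CJ2}).

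For the necessity, let $(u,v)\in C_{loc}^1(R^n\setminus\{0\})$ be a finite energy positive solution. First I would derive the energy identity: multiplying the first equation of (\ref{ELWHLS}) by $u^{q_1}$, integrating, applying Fubini, and then invoking the second equation gives
$$\int_{R^n}u^{q_1+1}dx=\int_{R^n}\int_{R^n}\frac{u^{q_1}(x)v^{q_2}(y)}{|x|^{\beta_1}|y|^{\beta_2}|x-y|^{n-\alpha}}dxdy=\int_{R^n}v^{q_2+1}dy,$$
the analog of (\ref{equa}). Next, differentiating $u(\mu x)$ at $\mu=1$ after the substitution $y=\mu z$ in the first equation yields
$$x\cdot\nabla u(x)=(\alpha-\beta_1-\beta_2)u(x)+\frac{1}{|x|^{\beta_1}}\int_{R^n}\frac{z\cdot\nabla v^{q_2}(z)}{|z|^{\beta_2}|x-z|^{n-\alpha}}dz.$$
Multiplying by $u^{q_1}(x)$, integrating over $R^n$, and using Fubini together with the second equation converts the resulting double integral into $\int_{R^n}(z\cdot\nabla v^{q_2}(z))v(z)dz$; the weights $|x|^{-\beta_1}$ and $|y|^{-\beta_2}$ cancel precisely through this cross-substitution, which is the structural reason the computation closes.

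It then remains to integrate by parts. Writing $u^{q_1}\nabla u=\frac{1}{q_1+1}\nabla(u^{q_1+1})$ and $v\nabla v^{q_2}=\frac{q_2}{q_2+1}\nabla(v^{q_2+1})$ and integrating over an annulus $B_R(0)\setminus B_\epsilon(0)$ produces boundary terms on $\partial B_R$ and $\partial B_\epsilon$. The finite energy condition lets me select sequences $R_j\to\infty$ and $\epsilon_j\to 0$ along which $R_j\int_{\partial B_{R_j}}(u^{q_1+1}+v^{q_2+1})ds\to 0$ and $\epsilon_j\int_{\partial B_{\epsilon_j}}(u^{q_1+1}+v^{q_2+1})ds\to 0$, so all boundary contributions vanish, exactly as in the integral Pohozaev identity of \cite{CDM}. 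The identity then reduces to
$$\Big(\frac{n}{q_1+1}+\alpha-\beta_1-\beta_2\Big)\int_{R^n}u^{q_1+1}dx=\frac{nq_2}{q_2+1}\int_{R^n}v^{q_2+1}dx,$$
and substituting the energy identity and dividing by the common positive energy gives precisely (\ref{ELWHLScond2}).

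I expect the decisive difficulty to be the rigorous vanishing of the boundary terms. Since $(u,v)$ is only $C^1$ away from the origin and carries the singular weights $|x|^{-\beta_i}$, I must verify not only that a suitable sequence $R_j\to\infty$ exists (which follows from $u\in L^{q_1+1}$, $v\in L^{q_2+1}$) but also that no spurious contribution survives as $\epsilon_j\to 0$, where the kernels and the solutions themselves may be singular. Controlling the decay at infinity and the integrability near the origin, ideally by invoking the radial symmetry, integrability and asymptotic estimates for WHLS systems (cf. \cite{CJ}, \cite{CJ2}, \cite{LLM}), is the technical heart of the necessity argument.
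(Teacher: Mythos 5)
Your proposal is correct and follows essentially the same route as the paper: sufficiency via Lieb's extremal functions for the WHLS inequality (with regularity lifting for the $C_{loc}^1$ claim), and necessity via the integral-form Pohozaev identity obtained by differentiating $u(\mu x)$ at $\mu=1$, applying Fubini, and killing the boundary terms at $0$ and $\infty$ along sequences chosen using the $L^{q_1+1}\times L^{q_2+1}$ integrability. The technical point you flag (vanishing of the inner boundary term as $\epsilon_j\to 0$) is resolved in the paper exactly as you suggest, by extracting sequences $d_m\to 0$ with $d_m\int_{\partial B_{d_m}}u^{q_1+1}ds\to 0$, which follows directly from finiteness of the energy.
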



\begin{proof}

{\it Sufficiency.}

According to \cite{L}, the existence of the extremal functions of
the WHLS inequality implies our conclusion. In fact, those extremal functions
are finite energy solutions. By a regularity lifting process, the extremal
functions also belong to $C_{loc}^1(R^n \setminus \{0\})$.

{\it Necessity.}

Denote $n-\alpha+\beta_1+\beta_2$ by $\bar{\lambda}$.
For $x \neq 0$ and $\mu>0$, we have
$$
u(\mu x)=\int_{R^n}\frac{v^{q_2}(y)dy}{|\mu x|^{\beta_1}|\mu
x-y|^{n-\alpha}|y|^{\beta_2}}
=\mu^{n-\bar{\lambda}}\int_{R^n}\frac{v^{q_2}(\mu
z)dz}{|x|^{\beta_1}|x-z|^{n-\alpha}|z|^{\beta_2}}.
$$
Differentiating with respect to $\mu$ and then letting $\mu=1$, we get
\begin{equation} \label{poho}
x\cdot\nabla u(x)=(n-\bar{\lambda})u
+\lim_{d \to 0}\int_{R^n\setminus B_d(0)}\frac{z\cdot\nabla v^{q_2}(z)dz}
{|x|^{\beta_1}|x-z|^{n-\alpha}|z|^{\beta_2}}.
\end{equation}
Multiplying by $u^{q_1}$ and integrating on $R^n \setminus B_d(0)$,
we have
\begin{equation} \label{shuix}
\begin{array}{ll}
&\displaystyle\lim_{d \to 0} \int_{R^n \setminus B_d(0)}u^{q_1}(x\cdot\nabla u(x))dx
=(n-\bar{\lambda})\int_{R^n}u^{q_1+1}(x)dx\\[3mm]
&\quad +\displaystyle\lim_{d \to 0}\int_{R^n\setminus B_d(0)}u^{q_1}(x)
\int_{R^n\setminus B_d(0)}\frac{z\cdot\nabla v^{q_2}(z)dz}
{|x|^{\beta_1}|x-z|^{n-\alpha}|z|^{\beta_2}}dx.
\end{array}
\end{equation}
Integrating by parts, we get
$$\begin{array}{ll}
K_1&:=\displaystyle\lim_{d \to 0} \int_{R^n \setminus B_d(0)}u^{q_1}(x\cdot\nabla u(x))dx
=\lim_{d \to 0} \frac{1}{q_1+1} \int_{R^n \setminus B_d(0)}(x\cdot\nabla u^{q_1+1}(x))dx\\[3mm]
&=\displaystyle\lim_{r \to \infty} \frac{r}{q_1+1}\int_{\partial B_r(0)}u^{q_1+1}(x)ds
-\lim_{d \to 0} \frac{d}{q_1+1}\int_{\partial B_d(0)}u^{q_1+1}(x)dx\\[3mm]
&\quad -\displaystyle\frac{n}{q_1+1}\int_{R^n}u^{q_1+1}(x)dx.
\end{array}
$$
In view of $u \in L^{q_1+1}(R^n)$, we can find $r=r_j \to \infty$ and
$d=d_m \to \infty$ such that
$$
\lim_{r \to \infty} r\int_{\partial B_r(0)}u^{q_1+1}(x)ds
=\lim_{d \to 0} d\int_{\partial B_d(0)}u^{q_1+1}(x)dx=0,
$$
and hence
$$
K_1=-\frac{n}{q_1+1}\int_{R^n}u^{q_1+1}(x)dx.
$$
Using the Fubini theorem, we have
$$\begin{array}{ll}
K_2&:=\displaystyle\lim_{d \to 0}\int_{R^n\setminus B_d(0)}u^{q_1}(x)
\int_{R^n\setminus B_d(0)}\frac{z\cdot\nabla v^{q_2}(z)dz}
{|x|^{\beta_1}|x-z|^{n-\alpha}|z|^{\beta_2}}dx\\[3mm]
&=\displaystyle\lim_{d \to 0}\int_{R^n\setminus B_d(0)}z\cdot\nabla v^{q_2}(z)
\int_{R^n\setminus B_d(0)}\frac{u^{q_1}(x)dx}
{|z|^{\beta_2}|z-x|^{n-\alpha}|x|^{\beta_1}} dz\\[3mm]
&=\displaystyle\lim_{d \to 0}\int_{R^n\setminus B_d(0)}(z\cdot\nabla v^{q_2}(z))v(z)dz.
\end{array}
$$
Similar to the calculation of $K_1$, we also obtain
$$
K_2=-\frac{q_2n}{q_2+1}\int_{R^n}v^{q_2+1}(z)dz.
$$
Inserting $K_1$ and $K_2$ into (\ref{shuix}), we have
$$
-\frac{n}{q_1+1}\int_{R^n}u^{q_1+1}(x)dx=(n-\bar{\lambda})\int_{R^n}u^{q_1+1}(x)dx
-\frac{q_2n}{q_2+1}\int_{R^n}v^{q_2+1}(z)dz.
$$

By (\ref{ELWHLS}) and the Fubini theorem, we also have
$$\begin{array}{ll}
&\quad\displaystyle\int_{R^n}u^{q_1+1}(x)dx=\int_{R^n}u^{q_1}(x)u(x)dx
=\int_{R^n}u^{q_1}(x)\int_{R^{n}} \frac{v^{q_2}(y)}{|x|^{\beta_1}|x-y|^{n-\alpha}|y|^{\beta_2}}dx\\[3mm]
&=\displaystyle\int_{R^n}v^{q_2}(y)\int_{R^{n}} \frac{u^{q_1}(x)dx}{|y|^{\beta_2}|y-x|^{n-\alpha}|x|^{\beta_1}}dy
=\int_{R^n}v^{q_2}(y)v(y)dy=\int_{R^n}v^{q_2+1}(y)dy.
\end{array}
$$
Combining two results above yields (\ref{ELWHLScond2}).
\end{proof}

\subsection{Equations with Bessel potentials}

Same as (\ref{el2}), the fractional order equation
\begin{equation} \label{wuqiong}
(-\Delta)^{\alpha/2}u=u^q, \quad u>0 ~in~ R^n,
\end{equation}
is still invariant under the conformal transformation as long as
$q=\frac{n+\alpha}{n-\alpha}$. Here the fractional order
differential operator $(-\Delta)^{\alpha/2}$ can be defined via
the properties of the Riesz potential (cf. \cite{Stein}).
According to the results in \cite{CL-cpaa}
and \cite{CLO}, it is equivalent to the integral equation (\ref{hls}).
In addition, the fact
$
\delta^{-\alpha/2}=c_\alpha\int_0^\infty exp(-t\delta)
t^{\alpha/2} \frac{dt}{t}
$
shows that the kernel of the Riesz potential can be written as a
static heat kernel. Namely, besides (\ref{hls}), we obtain another
integral equation which is equivalent to (\ref{wuqiong}):
\begin{equation} \label{heat}
u(x)=\int_{R^n}u^q(y)\int_0^\infty (4\pi
t)^{\frac{\alpha-n}{2}}\exp(-\frac{|x-y|^2}{4t})\frac{dt}{t}dy.
\end{equation}

If replacing the static heat kernel
$
H(x)=c_\alpha\int_0^\infty (4\pi
t)^{\frac{\alpha-n}{2}}\exp(-\frac{|x-y|^2}{4t})\frac{dt}{t}
$
by the Bessel kernel
$
g_\alpha(x)=c_\alpha\int_0^\infty (4\pi t)^{\frac{\alpha-n}{2}}
\exp(-\frac{|x-y|^2}{4t}-\frac{t}{4\pi})\frac{dt}{t},
$
then we have a new integral equation
\begin{equation} \label{hanzaifen}
u(x)=\int_{R^n} g_\alpha(x-y)u^q(y)dy, \quad u>0 ~in~ R^n,
\end{equation}
which is equivalent to the fractional order equation (cf.
\cite{HL})
\begin{equation} \label{tongt}
(id-\Delta)^{\alpha/2}u=u^q, \quad u>0 ~in~ R^n.
\end{equation}
This equation is not invariant under the scaling (\ref{scal}).

When $\alpha=2$, (\ref{tongt}) becomes a semilinear equation
\begin{equation} \label{schr}
-\Delta u+u=u^q, \quad u>0~in~R^n.
\end{equation}
Here $q>1$. It can be used to describe the solitary wave of the
Schr\"odinger equation. A known result implied in Chapter 8 of
\cite{Caz} is $q < \frac{n+2}{n-2}$ (namely $q$ is subcritical) if
$u \in H^1(R^n)$ is a weak solution of (\ref{schr}).

Next, we investigate the relation between weak solutions and
finite energy solutions of (\ref{schr}).

\begin{proposition} \label{prop3.1}
Assume $u$ is a positive solution of (\ref{schr}). Then $u
\in C^2(R^n) \cap L^{q+1}(R^n)$ if and only if $u \in H^1(R^n)$.
In addition, $\|u\|_{H^1(R^n)}^2=\|u\|_{L^{q+1}(R^n)}^{q+1}$.
\end{proposition}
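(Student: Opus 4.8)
The plan is to prove the equivalence by the multiplier method, exactly as in Proposition \ref{prop2.2}, the decisive new ingredient being the zeroth-order term $+u$ in (\ref{schr}): tested against $u$ it produces $\int u^2$, and hence the full $H^1$ norm rather than only $\|\nabla u\|_{L^2}$. Throughout I would use the cut-off $\zeta_R$ of (\ref{cut}) together with the identity obtained by multiplying (\ref{schr}) by $u\zeta_R^2$ and integrating,
\[
\int_{R^n}|\nabla u|^2\zeta_R^2\,dx+\int_{R^n}u^2\zeta_R^2\,dx
=\int_{R^n}u^{q+1}\zeta_R^2\,dx-2\int_{R^n}u\zeta_R\,\nabla u\cdot\nabla\zeta_R\,dx .
\]

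First I would treat $u\in H^1(R^n)\Rightarrow u\in C^2(R^n)\cap L^{q+1}(R^n)$. By the subcriticality recalled from \cite{Caz}, $q+1\le \frac{2n}{n-2}$, so the Sobolev embedding $H^1(R^n)\hookrightarrow L^{q+1}(R^n)$ gives $u\in L^{q+1}(R^n)$ immediately; for $C^2$ regularity I would pass to the equivalent integral equation (\ref{hanzaifen}) with $\alpha=2$ and invoke the regularity lifting of \cite{HL} (equivalently, a standard elliptic bootstrap for $(\mathrm{id}-\Delta)u=u^q-u$). The energy identity is then cleanest from the weak formulation: since $u\in H^1\cap L^{q+1}$, the function $u$ is an admissible test function and $u^q\in L^{(q+1)/q}$ pairs with $u\in L^{q+1}$, so testing (\ref{schr}) against $u$ yields $\|u\|_{H^1(R^n)}^2=\int_{R^n}u^{q+1}\,dx=\|u\|_{L^{q+1}(R^n)}^{q+1}$ with no boundary terms; for a purely classical solution the same identity follows by the $R_j\to\infty$ boundary-vanishing trick used in Proposition \ref{prop2.2}(1).

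The substantive direction is $u\in C^2(R^n)\cap L^{q+1}(R^n)\Rightarrow u\in H^1(R^n)$. Starting from the displayed identity, Young's inequality on the last term gives
\[
\tfrac12\int_{R^n}|\nabla u|^2\zeta_R^2\,dx+\int_{R^n}u^2\zeta_R^2\,dx
\le\int_{R^n}u^{q+1}\,dx+2\int_{R^n}u^2|\nabla\zeta_R|^2\,dx,
\]
so everything hinges on controlling the annular error $\int_{R^n}u^2|\nabla\zeta_R|^2\le \frac{C}{R^2}\int_{B_{2R}\setminus B_R}u^2\,dx$. Since $L^{q+1}$ does not dominate $L^2$ pointwise at infinity, I would use H\"older on the annulus, $\int_{B_{2R}\setminus B_R}u^2\le(\int_{B_{2R}\setminus B_R}u^{q+1})^{2/(q+1)}|B_{2R}\setminus B_R|^{(q-1)/(q+1)}$, bounding the error by $C(\int_{B_{2R}\setminus B_R}u^{q+1})^{2/(q+1)}R^{[q(n-2)-(n+2)]/(q+1)}$. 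As the tail $\int_{B_{2R}\setminus B_R}u^{q+1}\to0$, this tends to $0$ as $R\to\infty$ whenever the $R$-exponent is nonpositive, i.e. whenever $q\le\frac{n+2}{n-2}$. Letting $R\to\infty$ and using monotone convergence on the left then delivers $\nabla u\in L^2$ and $u\in L^2$, i.e. $u\in H^1(R^n)$, and the same limit recovers the energy identity.

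The main obstacle is precisely this control of the error at infinity, equivalently showing $\int_{B_R}u^2=o(R^2)$. For $q\le\frac{n+2}{n-2}$ the H\"older bound closes the estimate, and this (sub/critical) range is exactly where finite energy solutions are known to live; outside it the cut-off estimate degenerates, and one must instead exploit the feature separating the Bessel equation from its Riesz analogue—the kernel $g_2$ in (\ref{hanzaifen}) decays exponentially, so the representation $u=g_2*u^q$ forces genuine decay of $u$ and hence $u\in L^2$ directly. I expect the bookkeeping of the annular term, and the verification that no nonzero boundary contribution survives along the chosen sequence $R_j\to\infty$, to be the only delicate points; the remainder is the multiplier calculus already carried out in Proposition \ref{prop2.2}.
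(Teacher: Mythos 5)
Your proposal is correct and takes essentially the same route as the paper's own proof, which likewise tests (\ref{schr}) with $u\zeta_R^2$, controls the error term $\int_{R^n}u^2|\nabla\zeta_R|^2dx$ by H\"older together with subcriticality of $q$, and derives the identity $\|u\|_{H^1(R^n)}^2=\|u\|_{L^{q+1}(R^n)}^{q+1}$ by multiplying by $u$ and killing the sphere terms along a sequence $R_j\to\infty$. Your small variants are all sound and in places sharper than the paper's: the Sobolev embedding shortcut for $H^1\Rightarrow L^{q+1}$ (the paper instead re-tests the equation, which avoids invoking subcriticality there), the annular form of H\"older, which also covers the critical exponent $q=\frac{n+2}{n-2}$ where the paper's bound $CR^{\frac{n(q-1)}{q+1}-2}$ merely stays bounded, and your explicit acknowledgment that the substantive direction needs $q\leq\frac{n+2}{n-2}$ a priori --- a point the paper glosses over by asserting ``$q$ is subcritical'' from \cite{Caz}, although that result presupposes $u\in H^1(R^n)$, i.e.\ exactly what Step 2 is trying to prove.
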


\begin{proof}
{\it Step 1.}
If $u \in H^1(R^n)$ is a weak solution, then $u \in C^2(R^n)$ (cf. \cite{Caz}).
Testing by $u\zeta_R^2$ yields
\begin{equation} \label{xiong}
\int_{B_{2R}(0)}\nabla u\nabla(u\zeta_R^2)dx+\int_{B_{2R}(0)}u^2\zeta_R^2dx
=\int_{B_{2R}(0)}u^{q+1}\zeta_R^2dx.
\end{equation}
Therefore, by the H\"older inequality, from $u \in H^1(R^n)$ we deduce that
$
\int_{B_{2R}(0)}u^{q+1}\zeta_R^2dx \leq C,
$
where $C>0$ is independent of $R$. Letting $R \to \infty$, we get $u \in L^{q+1}(R^n)$.

{\it Step 2.}
On the contrary, if $u \in C^2(R^n) \cap L^{q+1}(R^n)$, multiplying
by $u\zeta_R^2$ and integrating on $B_{2R}(0)$, we also have (\ref{xiong}).
In view of $u \in L^{q+1}(R^n)$, it follows
$$
\int_{B_{2R}(0)}u^2|\nabla \zeta_R|^2dx
\leq \|u\|_{L^{q+1}(R^n)}^2(\int_{B_{2R}(0)}|\nabla \zeta_R|^{\frac{2(q+1)}{q-1}}dx)
^{\frac{q-1}{q+1}} \leq CR^{\frac{n(q-1)}{q+1}-2}.
$$
Since $q$ is subcritical, $\lim_{R \to \infty}\int_{B_{2R}(0)}u^2|\nabla \zeta_R|^2dx=0$.
Thus, we can easily see $u \in H^1(R^n)$.

{\it Step 3.} We claim $\|u\|_{H^1(R^n)}^2=\|u\|_{L^{q+1}(R^n)}^{q+1}$.
In fact, under each assumption, $u \in C^2(R^n)$. Multiplying (\ref{schr}) by $u$,
we get
\begin{equation} \label{jubu}
\int_{B_R(0)}(|\nabla u|^2+|u|^2)dx=\int_{B_R(0)}u^{q+1}dx+\int_{\partial B_R(0)}
u\partial_{\nu}uds.
\end{equation}
By virtue of $u \in H^1(R^n) \cap L^{q+1}(R^n)$, we can find $R=R_j \to \infty$
such that
$$
R\int_{\partial B_R(0)}(|\nabla u|^2+u^{q+1})ds \to 0.
$$
Therefore, by the H\"older inequality, we have
$$\begin{array}{ll}
|\displaystyle\int_{\partial B_R(0)}u \partial_{\nu}uds|
&\leq (R\displaystyle\int_{\partial B_R(0)}
|\nabla u|^2dx)^{\frac{1}{2}}
(R\int_{\partial B_R(0)}u^{q+1}dx)^{\frac{1}{q+1}}
|\partial B_R|^{\frac{1}{2}-\frac{1}{q+1}}\\[3mm]
&\leq CR^{(n-1)(\frac{1}{2}-\frac{1}{q+1})-(\frac{1}{2}+\frac{1}{q+1})}.
\end{array}
$$
Since $q$ is subcritical,
$\lim_{R \to \infty} |\int_{\partial B_R(0)}u \partial_{\nu}uds|=0$.
Inserting this into (\ref{jubu}), we obtain
$$
\int_{R^n}(|\nabla u|^2+|u|^2)dx=\int_{R^n}u^{q+1}dx.
$$
Theorem \ref{prop3.1} is proved.
\end{proof}

\begin{proposition} \label{prop3.2}
Assume $u$ solves (\ref{hanzaifen}), then
$u \in H^{\alpha/2}(R^n)$ if and only if $u \in L^{q+1}(R^n)$. In addition,
$\|u\|_{H^{\alpha/2}(R^n)}^2=\|u\|_{L^{q+1}(R^n)}^{q+1}$.
\end{proposition}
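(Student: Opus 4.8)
The plan is to move the whole question onto the Fourier side, exploiting that (\ref{hanzaifen}) is equivalent to the fractional equation (\ref{tongt}) and that the Bessel kernel is exactly the multiplier $\hat g_\alpha(\xi)=(1+|\xi|^2)^{-\alpha/2}$, so that convolution with $g_\alpha$ inverts $(id-\Delta)^{\alpha/2}$. With the convention $\|u\|_{H^{\alpha/2}(R^n)}^2=\int_{R^n}(1+|\xi|^2)^{\alpha/2}|\hat u(\xi)|^2\,d\xi$ (which for $\alpha=2$ reduces to the norm used in Proposition \ref{prop3.1}), taking Fourier transforms in (\ref{hanzaifen}) and using the convolution theorem gives $\hat u=\hat g_\alpha\,\widehat{u^q}$, that is
\[
(1+|\xi|^2)^{\alpha/2}\hat u(\xi)=\widehat{u^q}(\xi).
\]
Everything then reduces to turning this relation into the energy identity.

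Granting the needed integrability, the identity is one line via Parseval: substituting the displayed relation into the norm and using that $u,u^q$ are real,
\[
\|u\|_{H^{\alpha/2}(R^n)}^2=\int_{R^n}(1+|\xi|^2)^{\alpha/2}\hat u\,\overline{\hat u}\,d\xi=\int_{R^n}\widehat{u^q}\,\overline{\hat u}\,d\xi=\int_{R^n}u^{q+1}\,dx=\|u\|_{L^{q+1}(R^n)}^{q+1}.
\]
Once this is established (with both sides allowed a priori to be $+\infty$ but provably equal), both halves of the ``if and only if'' are automatic, since the left side is finite precisely when the right side is, and the displayed equality is exactly the claimed norm identity. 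Thus the proposition collapses to justifying this single computation. This is the fractional analogue of the three-step argument of Proposition \ref{prop3.1}, but organized through Plancherel rather than cut-offs.

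The main obstacle is exactly that justification, because the nonlocal operator $(id-\Delta)^{\alpha/2}$ has no Leibniz rule, so the cut-off/integration-by-parts device available for $\alpha=2$ in Proposition \ref{prop3.1} cannot be imitated directly. I would argue as follows. For the implication $u\in L^{q+1}\Rightarrow u\in H^{\alpha/2}$: from $u\in L^{q+1}$ one gets $u^q\in L^{(q+1)/q}$, so $\widehat{u^q}$ is a genuine tempered distribution and the relation above determines $\hat u$; invoking the subcriticality of $q$ (forced for finite energy solutions of Bessel equations, exactly as recalled for (\ref{schr}) in Proposition \ref{prop3.1}) gives the dual Sobolev embedding $L^{(q+1)/q}(R^n)\hookrightarrow H^{-\alpha/2}(R^n)$, hence $(1+|\xi|^2)^{-\alpha/4}\widehat{u^q}\in L^2$ and therefore $(1+|\xi|^2)^{\alpha/4}\hat u\in L^2$, i.e. $u\in H^{\alpha/2}$. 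The converse follows from the Sobolev embedding $H^{\alpha/2}(R^n)\hookrightarrow L^{q+1}(R^n)$ in the same range. To make the Parseval pairing rigorous rather than formal I would truncate on the Fourier side (or mollify and truncate $u$), apply the honest $L^2$ Plancherel identity to the approximants, and pass to the limit using the membership just established together with monotone/dominated convergence; the delicate point to check is that no mass is lost at infinity in the limit, the analogue of the vanishing surface integrals in Proposition \ref{prop3.1}, which here is supplied by the $L^2$ convergence of $(1+|\xi|^2)^{\alpha/4}\hat u$. Alternatively, one may localise via the Caffarelli--Silvestre extension and run a cut-off argument in the upper half-space parallel to Proposition \ref{prop3.1}, which makes the vanishing of the boundary contribution transparent.
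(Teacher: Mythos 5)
Your proposal follows exactly the paper's argument: Fourier-transforming (\ref{hanzaifen}) to get $(1+4\pi^2|\xi|^2)^{\alpha/2}\hat u=(u^q)^{\wedge}$, multiplying by $\bar{\hat u}$, and applying Parseval to obtain $\|u\|_{H^{\alpha/2}(R^n)}^2=\|u\|_{L^{q+1}(R^n)}^{q+1}$, from which both directions of the equivalence follow. The extra care you take in justifying the pairing (truncation, dual Sobolev embedding) is more than the paper provides, but the route is the same.
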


\begin{proof}
From (\ref{hanzaifen}), we have
$\hat{u}(\xi)=\hat{g}_\alpha(\xi) (u^q)^{\wedge}(\xi)$, or $(1+4\pi^2
|\xi|^2)^{\frac{\alpha}{2}}\hat{u}(\xi)=(u^q)^{\wedge}(\xi)$.
Multiplying by $\bar{\hat{u}}$ and using the Parseval identity,
we get
$$
\int_{R^n}(1+4\pi^2|\xi|^2)^{\frac{\alpha}{2}}|\hat{u}|^2d\xi
=\int_{R^n}(u^q)^{\wedge}\bar{\hat{u}}d\xi
=\int_{R^n}u^{q+1}dx.
$$
Therefore, the proof is easy to complete.
\end{proof}

\begin{theorem} \label{th3.1}

(1) If $q<\frac{n+\alpha}{n-\alpha}$, (\ref{hanzaifen}) has a positive solution
in $L^{q+1}(R^n)$. Moreover, if $\alpha>1$, then $u \in C^1(R^n)$.

(2) If (\ref{hanzaifen}) has a positive solution $u \in C^1(R^n) \cap L^{q+1}(R^n)$,
then $q<\frac{n+\alpha}{n-\alpha}$.
\end{theorem}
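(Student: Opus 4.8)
The plan is to handle the two parts by entirely different mechanisms. Part (1) is an existence statement, which I would obtain variationally, exploiting the mass term $id$ in (\ref{tongt}); part (2) is a necessity statement, which I would derive from a Pohozaev identity combined with the energy identity of Proposition \ref{prop3.2}. Throughout I use that a nonnegative nontrivial critical point of the natural functional solves (\ref{hanzaifen}) by the equivalence already recorded before (\ref{tongt}).

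For part (1) I would work with
$$I(u)=\frac{1}{2}\int_{R^n}(1+4\pi^2|\xi|^2)^{\alpha/2}|\hat u(\xi)|^2 d\xi-\frac{1}{q+1}\int_{R^n}u_+^{q+1}dx$$
on $H^{\alpha/2}(R^n)$. Since $q>1$ the nonlinearity is superquadratic, so $I$ has the mountain-pass geometry: $I(0)=0$; by the continuous embedding $H^{\alpha/2}(R^n)\hookrightarrow L^{q+1}(R^n)$, which holds precisely because $q+1<\frac{2n}{n-\alpha}$, one has $I\ge\beta>0$ on a small sphere $\|u\|_{H^{\alpha/2}}=\rho$; and $I(te)\to-\infty$ along any fixed nonnegative $e$. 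The obstruction is the usual loss of compactness on $R^n$ arising from the translation invariance of (\ref{tongt}). I would remove it by restricting $I$ to the radial subspace $H^{\alpha/2}_{\mathrm{rad}}(R^n)$, where the embedding into $L^{q+1}(R^n)$ is compact for every subcritical exponent; a mountain-pass sequence there then converges to a nontrivial critical point, which is a genuine critical point of $I$ by the principle of symmetric criticality. Passing to $|u|$ makes the solution nonnegative, and the strict positivity of the Bessel kernel forces $u(x)=\int_{R^n}g_\alpha(x-y)u^q(y)dy>0$ everywhere.

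For the $C^1$ regularity when $\alpha>1$, I would first run a regularity-lifting bootstrap on (\ref{hanzaifen}) as in \cite{CLO} and \cite{HL}, using $g_\alpha\in L^1(R^n)$ together with $u\in L^{q+1}(R^n)$, to upgrade $u$ to $L^\infty(R^n)$, so that $u^q\in L^1\cap L^\infty$. The point where $\alpha>1$ enters is that the Bessel kernel satisfies $|\nabla g_\alpha(x)|\le C|x|^{\alpha-n-1}$ near the origin and decays exponentially at infinity, so $\nabla g_\alpha\in L^1(R^n)$ exactly when $\alpha>1$; then $\nabla u=(\nabla g_\alpha)*u^q$ is continuous and $u\in C^1(R^n)$.

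For part (2) I would combine two identities. First, since $u\in L^{q+1}(R^n)$, Proposition \ref{prop3.2} gives $u\in H^{\alpha/2}(R^n)$ and the energy identity $Q_0:=\int_{R^n}(1+4\pi^2|\xi|^2)^{\alpha/2}|\hat u|^2 d\xi=\int_{R^n}u^{q+1}dx$. Second, differentiating $I(u_\mu)$ with $u_\mu(x)=u(x/\mu)$ at $\mu=1$ produces, in Fourier variables, the Pohozaev identity
$$\frac{n}{2} Q_0-\frac{\alpha}{2} P-\frac{n}{q+1}\int_{R^n}u^{q+1}dx=0,\qquad P:=\int_{R^n}4\pi^2|\xi|^2(1+4\pi^2|\xi|^2)^{\frac{\alpha}{2}-1}|\hat u|^2 d\xi.$$
Inserting the energy identity yields $n(\tfrac{1}{2}-\tfrac{1}{q+1})Q_0=\tfrac{\alpha}{2}P$. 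Writing $(1+4\pi^2|\xi|^2)^{\alpha/2}=(1+4\pi^2|\xi|^2)^{\frac{\alpha}{2}-1}+4\pi^2|\xi|^2(1+4\pi^2|\xi|^2)^{\frac{\alpha}{2}-1}$ gives $P=Q_0-R$ with $R:=\int_{R^n}(1+4\pi^2|\xi|^2)^{\frac{\alpha}{2}-1}|\hat u|^2 d\xi>0$ strictly, because $u\not\equiv 0$. Hence $P<Q_0$, and $n(\tfrac{1}{2}-\tfrac{1}{q+1})Q_0<\tfrac{\alpha}{2}Q_0$ reduces, after dividing by $Q_0>0$, to $q<\frac{n+\alpha}{n-\alpha}$. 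The main obstacle here is the rigorous justification of the Pohozaev identity: the dilation generator $x\cdot\nabla u$ need not lie in $H^{\alpha/2}(R^n)$, so I would instead differentiate the explicit $\mu$-dependence of each term of $I(u_\mu)$ and control the boundary contributions through the decay of $u$ and $\nabla u$ furnished by the regularity step, in the same spirit as the integral-form Pohozaev arguments of Theorems \ref{th2.2} and \ref{th2.5}.
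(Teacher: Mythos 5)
Your proposal is correct, but it takes a genuinely different route from the paper in both halves, so let me compare. For part (1), the paper finds a ground state on the Nehari manifold following \cite{FQT}, obtains radial symmetry a posteriori from \cite{MCh}, and lifts regularity from $H^{\alpha/2}(R^n)$ to $C^1(R^n)$ by the method of \cite{Lei-MZ}; you instead run the mountain pass on $H^{\alpha/2}_{\mathrm{rad}}(R^n)$ with the compact radial embedding plus symmetric criticality, and prove $C^1$ regularity directly from $\nabla u=(\nabla g_\alpha)*u^q$ with $\nabla g_\alpha\in L^1(R^n)$ exactly when $\alpha>1$ --- a more explicit and self-contained regularity argument than the paper's. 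One small repair there: do not "pass to $|u|$", since for $\alpha>2$ the map $u\mapsto|u|$ need not decrease the $H^{\alpha/2}$ norm; the correct observation is the one you state next, namely that a critical point of your $u_+$-functional satisfies $u=g_\alpha*(u_+^q)\ge 0$, whence $u=u_+>0$ by positivity of the kernel. For part (2), the paper differentiates the heat-kernel representation of (\ref{hanzaifen}) pointwise in the dilation parameter (this is precisely where the hypothesis $u\in C^1$ is used), multiplies by $u^q$, applies Fubini and integrates by parts along radii $R_j\to\infty$, with strictness coming from the sign of the term generated by the factor $e^{-\mu^2t/4\pi}$; you instead run the Fourier-side Pohozaev identity combined with the energy identity of Proposition \ref{prop3.2}, with strictness coming from $R=\int_{R^n}(1+4\pi^2|\xi|^2)^{\alpha/2-1}|\hat u|^2d\xi>0$. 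This is exactly the mechanism of the paper's Theorem \ref{th3.2}, and the paper's Remark 2.2 explicitly endorses this route for the present theorem, noting it even removes the $C^1$ hypothesis; your version also avoids the delicate sign bookkeeping in the paper's own computation (where the printed claim that the right-hand side is positive, hence $\frac{q-1}{q+1}n-\alpha>0$, is a slip --- the double integral is positive, so the coefficient is negative, which is what yields $q<\frac{n+\alpha}{n-\alpha}$). The one caveat: the technical crux of your part (2) is not "boundary contributions" --- in the Fourier computation there are none --- but the justification of $\frac{d}{d\mu}I(u_\mu)|_{\mu=1}=0$, i.e., the chain rule along the dilation curve, which is problematic exactly because $x\cdot\nabla u$ need not lie in $H^{\alpha/2}(R^n)$. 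The paper asserts the corresponding identity in Theorem \ref{th3.2} at the same formal level, so your rigor matches the paper's there; a fully rigorous treatment would either approximate the dilation generator or use the decay of $u$ and $\nabla u$ supplied by your regularity step, which is what the paper's own pointwise argument for this theorem accomplishes at the price of assuming $u\in C^1(R^n)$.
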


\begin{proof}
(1) By the analogous argument of the existence of ground state in \cite{FQT},
we can find a critical point of
$$
E(u)=\frac{1}{2}\int_{R^n}(1+4\pi^2|\xi|^2)^{\alpha/2}|\hat{u}(\xi)|^2d\xi
-\int_{R^n}\frac{u^{q+1}(x)}{q+1}dx
$$
on the Nihari manifold $\{u \in H^{\alpha/2}(R^n)\setminus \{0\};E'(u)u=0\}$.
According to Proposition \ref{prop3.2}, $u \in L^{q+1}(R^n)$. This implies
the existence of the weak solution of (\ref{tongt}).
According to the equivalence, (\ref{hanzaifen}) also
has a finite energy solution. In addition, $u$ is radially
symmetric and decreasing about some point in $R^n$ (cf. \cite{MCh}). In the
same way to lift regularity process in \cite{Lei-MZ}, we can also deduce
the regularity of the solution from
$H^{\alpha/2}(R^n)$ to $C^1(R^n)$ by virtue of $\alpha>1$.

(2) Clearly,
$$
u(\mu x)=\mu^\alpha\int_{R^n}u^q(\mu y)\int_0^\infty (4\pi
t)^{\frac{\alpha-n}{2}}
\exp(-\frac{|x-y|^2}{4t}-\frac{\mu^2t}{4\pi})\frac{dt}{t}dy.
$$
Thus,
$$\begin{array}{ll}
x\cdot \nabla u(x)=&[\displaystyle\frac{du(\mu x)}{d\mu}]_{\mu=1}
=\alpha u
+\int_{R^n}y \cdot \nabla u^q(y) g_\alpha(x-y)dy\\[3mm]
&-\displaystyle\int_{R^n}u^q(y)\int_0^\infty (4\pi
t)^{\frac{\alpha-n}{2}}
\exp(-\frac{|x-y|^2}{4t}-\frac{t}{4\pi})\frac{t}{2\pi}\frac{dt}{t}dy.
\end{array}
$$
Multiplying by $u^q(x)$ and integrating, we get
$$\begin{array}{ll}
&\quad \displaystyle\frac{1}{q+1}\int_{R^n}x \cdot \nabla
u^{q+1}(x)dx\\[3mm]
&=\alpha \displaystyle\int_{R^n}u^{q+1}(x)dx
+\frac{q}{q+1}\int_{R^n}y\cdot\nabla u^{q+1}(y)dy\\[3mm]
&-\displaystyle\int_{R^n}\int_{R^n}u^q(x)u^q(y)\int_0^\infty (4\pi
t)^{\frac{\alpha-n}{2}}
\exp(-\frac{|x-y|^2}{4t}-\frac{t}{4\pi})\frac{t}{2\pi}\frac{dt}{t}dxdy.
\end{array}
$$
If $u \in L^{q+1}(R^n)$, we can find $R=R_j \to \infty$ such that
$R\int_{\partial B_R(0)}u^{q+1}ds \to 0$. Thus, the result above
leads to
$$\begin{array}{ll}
&\quad (\displaystyle\frac{q-1}{q+1}n-\alpha)\int_{R^n}u^{q+1}dx\\[3mm]
&=-\displaystyle\int_{R^n}\int_{R^n}u^q(x)u^q(y)\int_0^\infty
(4\pi t)^{\frac{\alpha-n}{2}} \exp(-\frac{|x-y|^2}{4t}
-\frac{t}{4\pi})\frac{t}{2\pi}\frac{dt}{t}dxdy.
\end{array}
$$
Since the right hand side is positive, we can deduce that
$\frac{q-1}{q+1}n-\alpha>0$, which implies
$q<\frac{n+\alpha}{n-\alpha}$.
\end{proof}

\vskip 3mm

Consider the system
\begin{equation} \label{bssy}
 \left \{
   \begin{array}{l}
      (id-\Delta)^{\alpha/2}u=v^{q_2}, \quad u>0~in~R^n,\\
      (id-\Delta)^{\alpha/2}v=u^{q_1}, \quad v>0~in~R^n,
   \end{array}
   \right.
\end{equation}
where $n \geq 3$, $\alpha \in (0,n)$, $q_1,q_2>0$.

According to the definition of weak solutions $u,v$ of (\ref{bssy}) in
$H^{\alpha/2}(R^n)$, for all $\phi \in H^{\alpha/2}(R^n)$, there hold
$$
Re\int_{R^n}(1+4\pi^2|\xi|^2)^{\alpha/2}\hat{u}(\xi)\bar{\hat{\phi}}(\xi)d\xi
=\int_{R^n}v^{q_2}(x)\phi(x)dx,
$$
$$
Re\int_{R^n}(1+4\pi^2|\xi|^2)^{\alpha/2}\hat{v}(\xi)\bar{\hat{\phi}}(\xi)d\xi
=\int_{R^n}u^{q_1}(x)\phi(x)dx.
$$
Here $\hat{u}$ is the Fourier transformation of $u$.

\begin{theorem} \label{th3.2}
If (\ref{bssy}) has weak positive solutions in $H^{\alpha/2}(R^n)$.
Then
\begin{equation}\label{subcri}
\frac{1}{q_1+1}+\frac{1}{q_2+1}>\frac{n-\alpha}{n}.
\end{equation}
\end{theorem}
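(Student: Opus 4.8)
The plan is to reduce the differential system (\ref{bssy}) to its integral form and then run the Bessel-potential Pohozaev argument already used for the single equation in Theorem \ref{th3.1}(2). By the equivalence between (\ref{tongt}) and (\ref{hanzaifen}) (cf. \cite{HL}) applied to each line, a weak positive solution $(u,v)\in H^{\alpha/2}(R^n)\times H^{\alpha/2}(R^n)$ of (\ref{bssy}) solves the integral system
$$
u(x)=\int_{R^n}g_\alpha(x-y)v^{q_2}(y)dy,\qquad
v(x)=\int_{R^n}g_\alpha(x-y)u^{q_1}(y)dy.
$$
The Parseval computation in Proposition \ref{prop3.2} shows $u\in L^{q_1+1}(R^n)$ and $v\in L^{q_2+1}(R^n)$, so both components have finite energy; moreover, inserting the first equation into $\int u^{q_1+1}$ and using the symmetry of $g_\alpha$ together with the second equation gives the balance law $\int_{R^n}u^{q_1+1}dx=\int_{R^n}v^{q_2+1}dx=:I$.

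Next I would differentiate the scaling $u(\mu x)$ exactly as in Theorem \ref{th3.1}(2). Writing $g_\alpha$ through its subordinated heat-kernel representation and substituting $y=\mu z$ yields
$$
u(\mu x)=\mu^\alpha\int_{R^n}v^{q_2}(\mu z)\int_0^\infty(4\pi t)^{\frac{\alpha-n}{2}}\exp\Big(-\frac{|x-z|^2}{4t}-\frac{\mu^2t}{4\pi}\Big)\frac{dt}{t}dz,
$$
and differentiating at $\mu=1$ produces
$$
x\cdot\nabla u=\alpha u+\int_{R^n}z\cdot\nabla v^{q_2}(z)\,g_\alpha(x-z)dz-P(x),
$$
where $P(x)=\int_{R^n}v^{q_2}(z)\int_0^\infty(4\pi t)^{\frac{\alpha-n}{2}}\exp(-\frac{|x-z|^2}{4t}-\frac{t}{4\pi})\frac{t}{2\pi}\frac{dt}{t}dz>0$ is the extra term created because the Bessel factor $\exp(-t/4\pi)$ is not scale invariant. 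I would then multiply by $u^{q_1}(x)$ and integrate. Since $u\in L^{q_1+1}(R^n)$, there is a sequence $R_j\to\infty$ with $R_j\int_{\partial B_{R_j}}u^{q_1+1}ds\to0$, so integrating the left side by parts gives $-\frac{n}{q_1+1}I$. For the cross term, Fubini together with the symmetry of $g_\alpha$ and the second integral equation turn $\int u^{q_1}(x)g_\alpha(x-z)dx$ into $v(z)$, so that term equals $\frac{q_2}{q_2+1}\int z\cdot\nabla v^{q_2+1}dz=-\frac{q_2 n}{q_2+1}I$.

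Collecting the three contributions gives the identity
$$
\Big(\frac{q_2 n}{q_2+1}-\frac{n}{q_1+1}-\alpha\Big)I=-\int_{R^n}u^{q_1}(x)P(x)dx.
$$
Because $u,v>0$ and the kernel in $P$ is strictly positive, the right-hand side is strictly negative, forcing $\frac{q_2 n}{q_2+1}-\frac{n}{q_1+1}-\alpha<0$; rewriting $\frac{q_2}{q_2+1}=1-\frac{1}{q_2+1}$ this is exactly (\ref{subcri}). Carrying out the symmetric computation with $v$ (multiplying $x\cdot\nabla v$ by $v^{q_2}$) and adding the two identities yields the same conclusion and serves as a consistency check.

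The step I expect to be the main obstacle is the analytic justification rather than the algebra: establishing the equivalence of the weak $H^{\alpha/2}$ system with the integral system, and securing the regularity and decay needed to run the argument rigorously — differentiating under the integral sign in the scaling identity, interchanging the limit with the integral when passing from $\int_{B_R}$ to $R^n$, applying Fubini to the double and triple integrals, and verifying $\int_{R^n}u^{q_1}P\,dx<\infty$ (the kernel in $P$ is, up to a constant, a Bessel kernel of order $\alpha+2$, so this reduces to a weighted Young/Hardy--Littlewood--Sobolev estimate on the finite-energy data). Once finiteness and the vanishing of the boundary terms are in hand, it is precisely the strict positivity of $P$ that upgrades the critical relation to the strict subcritical inequality (\ref{subcri}).
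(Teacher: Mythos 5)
Your algebra is sound, and in fact your final identity is, term by term, the real-space counterpart of the paper's Fourier-space identity (\ref{wuyaling}): your coefficient $\frac{q_2n}{q_2+1}-\frac{n}{q_1+1}-\alpha$ is the negative of $n(\frac{1}{q_1+1}+\frac{1}{q_2+1})-(n-\alpha)$, and your term $\int_{R^n}u^{q_1}P\,dx$ is, up to a positive constant, exactly the quantity $Re\int_{R^n}(1+4\pi^2|\xi|^2)^{(\alpha-2)/2}\hat u\bar{\hat v}\,d\xi$ whose positivity the paper proves. But there is a genuine gap, and it is the one you flag and then defer: Theorem \ref{th3.2} assumes only weak positive solutions in $H^{\alpha/2}(R^n)$, for any $\alpha\in(0,n)$, while your argument requires pointwise statements --- forming $x\cdot\nabla u(x)$ and $z\cdot\nabla v^{q_2}(z)$, differentiating under the integral sign in the scaling identity, and integrating by parts with boundary terms $R_j\int_{\partial B_{R_j}}u^{q_1+1}ds\to0$. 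None of this is available for a mere $H^{\alpha/2}$ weak solution. In the paper's own development, $C^1$ regularity is obtained by regularity lifting only when $\alpha>1$ (cf. Theorem \ref{th3.1}(1)), so for $\alpha\le1$ your route does not close at all, and even for $\alpha>1$ you would first have to carry out the lifting and decay estimates for the system, a nontrivial extra step you do not supply. This is not an incidental technicality: the paper's Remark 2.2 says explicitly that the purpose of the method used for Theorem \ref{th3.2} is to remove the $C^1$ hypothesis that your method --- which is precisely the method of Theorem \ref{th3.1}(2) transplanted to the system --- needs.

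What the paper does instead stays entirely on the Fourier side, where only $H^{\alpha/2}$ membership is used: it tests the system by $u$ and by $v$ to get $Q_\alpha:=Re\int_{R^n}(1+4\pi^2|\xi|^2)^{\alpha/2}\hat u\bar{\hat v}\,d\xi=\int_{R^n}u^{q_1+1}dx=\int_{R^n}v^{q_2+1}dx$; it writes the Pohozaev identity $[\frac{d}{d\mu}E(u(\frac{x}{\mu}),v(\frac{x}{\mu}))]_{\mu=1}=0$ for the functional $E(u,v)$ whose critical points are the weak solutions, which yields $(n-\alpha)Q_\alpha+\alpha Q_{\alpha-2}=n\int_{R^n}(\frac{u^{q_1+1}}{q_1+1}+\frac{v^{q_2+1}}{q_2+1})dx$ with $Q_{\alpha-2}:=Re\int_{R^n}(1+4\pi^2|\xi|^2)^{(\alpha-2)/2}\hat u\bar{\hat v}\,d\xi$; and it proves $Q_{\alpha-2}>0$ by testing the system with the admissible function $w=g_2*v\in H^{\alpha/2}(R^n)$, $w>0$. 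All three steps involve only legitimate pairings of $H^{\alpha/2}$ functions, so no regularity or pointwise decay beyond the hypotheses is invoked. To rescue your proof as written you must either restrict to $\alpha>1$ and establish $C^1$ regularity and the required decay for the system, or replace the real-space differentiation by this Fourier-side computation; the strict positivity mechanism you identified (the non-scale-invariance of the Bessel kernel producing $P>0$) is the correct one and survives the translation.
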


\begin{proof}
Testing (\ref{bssy}) by $u$ and $v$ respectively, we have
$$
Re\int_{R^n}(1+4\pi^2|\xi|^2)^{\alpha/2}\hat{u}(\xi)\bar{\hat{v}}(\xi)d\xi
=\int_{R^n}v^{q_2+1}(x)dx,
$$
$$
Re\int_{R^n}(1+4\pi^2|\xi|^2)^{\alpha/2}\hat{v}(\xi)\bar{\hat{u}}(\xi)d\xi
=\int_{R^n}u^{q_1+1}(x)dx.
$$
Since the left hand sides of two equalities above are equal, it follows
\begin{equation} \label{yangjun}
Re\int_{R^n}(1+4\pi^2|\xi|^2)^{\alpha/2}\hat{u}(\xi)\bar{\hat{v}}(\xi)d\xi
=\int_{R^n}v^{q_2+1}(x)dx=\int_{R^n}u^{q_1+1}(x)dx.
\end{equation}

On the other hand, the positive weak solutions $u,v$
are the critical points of the functional
$$
E(u,v)=Re\int_{R^n}(1+4\pi^2|\xi|^2)^{\alpha/2}
\hat{u}(\xi)\bar{\hat{v}}(\xi)d\xi
-\int_{R^n}(\frac{u^{q_1+1}}{q_1+1}+\frac{v^{q_2+1}}{q_2+1})dx.
$$
Thus, the Pohozaev identity
$
[\frac{d}{d\mu}E(u(\frac{x}{\mu}),v(\frac{x}{\mu}))]_{\mu=1}=0
$
holds. By virtue of
$$
E(u(\displaystyle\frac{x}{\mu}),v(\frac{x}{\mu}))=
\mu^{n-\alpha}\displaystyle Re\int_{R^n}(1+4\pi^2|\zeta|^2)^{\alpha/2}
\hat{u}(\zeta)\bar{\hat{v}}(\zeta)d\zeta
-\mu^n\displaystyle\int_{R^n}(\frac{u^{q_1+1}}{q_1+1}
+\frac{v^{q_2+1}}{q_2+1})dy,
$$
the Pohozaev identity leads to
$$\begin{array}{ll}
(n-\alpha)\displaystyle Re\int_{R^n}(1+4\pi^2|\zeta|^2)^{\alpha/2}
\hat{u}(\zeta)\bar{\hat{v}}(\zeta)d\zeta
&+\alpha\displaystyle Re\int_{R^n}(1+4\pi^2|\zeta|^2)^{(\alpha-2)/2}
\hat{u}(\zeta)\bar{\hat{v}}(\zeta)d\zeta\\[3mm]
&=n\displaystyle\int_{R^n}(\frac{u^{q_1+1}}{q_1+1}
+\frac{v^{q_2+1}}{q_2+1})dy.
\end{array}
$$
Combining with (\ref{yangjun}), we get
\begin{equation} \label{wuyaling}
\alpha\displaystyle Re\int_{R^n}(1+4\pi^2|\zeta|^2)^{\frac{\alpha-2}{2}}
\hat{u}(\zeta)\bar{\hat{v}}(\zeta)d\zeta
=[n(\displaystyle\frac{1}{q_1+1}+\frac{1}{q_2+1})-(n-\alpha)]
\int_{R^n}u^{q_1+1}(x)dx.
\end{equation}

We claim that the left hand side of (\ref{wuyaling}) is positive.
In fact, set
$$
w(x)=\int_{R^n} g_2(x-y)v(y)dy.
$$
Then, $w>0$ belongs to $H^{\alpha/2}(R^n)$, and
$
\hat{w}=(1+4\pi^2|\xi|^2)^{-1}\hat{v}.
$
Testing (\ref{bssy}) by $w$ yields
$$
Re\int_{R^n}(1+4\pi^2|\xi|^2)^{\alpha/2}\hat{u}\bar{\hat{w}}d\xi
=\int_{R^n}v^{q_2}wdx,
$$
which implies
$$
Re\int_{R^n}(1+4\pi^2|\xi|^2)^{(\alpha-2)/2}\hat{u}\bar{\hat{v}}d\xi>0.
$$
Combining this result with (\ref{wuyaling}), we see the subcritical condition
(\ref{subcri}). Theorem \ref{th3.2} is proved.
\end{proof}

\paragraph{Remark 2.2.} If we prove the second conclusion of Theorem \ref{th3.1} by the same
way of Theorem \ref{th3.2}, the assumption of $u \in C^1(R^n)$ can be removed.

\subsection{Representation of minimum in critical case}

Consider the minimum of the following energy functional in
$H^{\alpha/2}(R^n)\setminus \{0\}$
$$
E(u)=\frac{1}{2}\int_{R^n}(1+4\pi^2|\xi|^2)^{\alpha/2}|\hat{u}(\xi)|^2d\xi
-\frac{1}{\alpha^*}\int_{R^n}u^{\alpha^*}(x)dx,
$$
where $\alpha^*=\frac{2n}{n-\alpha}$. Clearly, $\alpha^*-1$ is the critical exponent.

By the argument in \S 2.3, we know that $E(u)$ has no minimizer in
$H^{\alpha/2}(R^n)\setminus \{0\}$ in the critical case. However,
the radial function
$$
U_*(x)=a(\frac{b}{b^2+|x-x_0|^2})^{(n-\alpha)/2}, \quad
a,b>0 \quad and \quad x_0 \in R^n
$$
is the extremal the Hardy-Littlewood-Sobolev inequality (cf. \cite{L}). Furthermore,
according to the classification results in \cite{CLO} and \cite{YLi},
the radial function $U_*$ is the unique solution of (\ref{hls}). In addition, it is also
the extremal function in $\mathcal{D}^{\alpha/2,2}(R^n)\setminus \{0\}$
of the functional
$$
E_*(u)=\left[\int_{R^n}|(-\Delta)^{\alpha/4}u|^2dx\right]
\left[\int_{R^n}|u|^{2n/(n-\alpha)}dx\right]^{(\alpha-n)/n}.
$$
The classification of the solutions also provides the sharp constant in the inequality
of the critical Sobolev imbedding from $\mathcal{D}^{\alpha/2,2}(R^n)$ to
$L^{2n/(n-\alpha)}(R^n)$:
$$
c(\int_{R^n}|u|^{2n/(n-\alpha)}dx)^{(n-\alpha)/n}
\leq \int_{R^n}|(-\Delta)^{\alpha/4}u|^2dx.
$$

The following result shows the relation between the energy functionals
involving the Riesz potential and the Bessel potential in the critical case.

\begin{theorem} \label{th3.3}
$\inf\{E(u); u \in H^{\alpha/2}(R^n)\setminus
\{0\}\}=\frac{\alpha}{2n}[E_*(U_*)]^{n/\alpha}$.
\end{theorem}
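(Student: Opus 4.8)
The plan is to read the stated infimum as the least-energy (mountain-pass) level, which in this critical setting coincides with $\inf_{\mathcal N}E$ over the Nehari manifold $\mathcal N=\{u\in H^{\alpha/2}(R^n)\setminus\{0\}:E'(u)u=0\}$; the unconstrained infimum of $E$ is $-\infty$, so the Nehari constraint is what makes the value positive and finite. Writing $\|u\|_H^2:=\int_{R^n}(1+4\pi^2|\xi|^2)^{\alpha/2}|\hat u|^2\,d\xi$, the constraint $E'(u)u=0$ reads $\|u\|_H^2=\int_{R^n}u^{\alpha^*}\,dx$, and on $\mathcal N$ the functional collapses to a single term:
\[
E(u)=\Big(\tfrac12-\tfrac1{\alpha^*}\Big)\|u\|_H^2=\tfrac{\alpha}{2n}\|u\|_H^2,
\]
since $\tfrac12-\tfrac1{\alpha^*}=\tfrac{\alpha}{2n}$. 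Thus the whole computation reduces to evaluating $\inf_{\mathcal N}\|u\|_H^2$.

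Next I would carry out the standard fiber reduction. For each $u\neq 0$ there is a unique $t=t(u)>0$ with $tu\in\mathcal N$, namely $t^{\alpha^*-2}=\|u\|_H^2/\|u\|_{\alpha^*}^{\alpha^*}$, and a direct computation using $\tfrac{\alpha^*}{\alpha^*-2}=\tfrac{n}{\alpha}$ gives $\|tu\|_H^2=Q(u)^{n/\alpha}$, where $Q(u):=\|u\|_H^2/\|u\|_{\alpha^*}^2$ is the Bessel--Sobolev quotient. Hence $\inf_{\mathcal N}\|u\|_H^2=S_B^{\,n/\alpha}$ with $S_B:=\inf_{u\neq0}Q(u)$, so the target value is $\tfrac{\alpha}{2n}S_B^{\,n/\alpha}$, and it only remains to identify $S_B$ with $E_*(U_*)$.

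The identification $S_B=E_*(U_*)$ (the sharp homogeneous Sobolev constant) splits into two bounds. For the lower bound, $(1+4\pi^2|\xi|^2)^{\alpha/2}\ge(4\pi^2|\xi|^2)^{\alpha/2}$ gives $\|u\|_H^2\ge\|(-\Delta)^{\alpha/4}u\|_{L^2}^2$ in Fourier variables, so $Q(u)\ge E_*(u)\ge E_*(U_*)$ by the extremality of $U_*$, whence $S_B\ge E_*(U_*)$. For the upper bound I would run a concentration argument: for $u\in H^{\alpha/2}$ set $u_\lambda(x)=\lambda^{(n-\alpha)/2}u(\lambda x)$; then $\|u_\lambda\|_{\alpha^*}$ is scale-invariant, while $\|u_\lambda\|_H^2=\int_{R^n}(\lambda^{-2}+4\pi^2|\eta|^2)^{\alpha/2}|\hat u(\eta)|^2\,d\eta$ decreases to $\|(-\Delta)^{\alpha/4}u\|_{L^2}^2$ as $\lambda\to\infty$ by monotone convergence, so $Q(u_\lambda)\to E_*(u)$. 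Since $C_c^\infty(R^n)\subset H^{\alpha/2}(R^n)$ is dense in $\mathcal D^{\alpha/2,2}(R^n)$ and realizes $\inf E_*=E_*(U_*)$, this yields $S_B\le E_*(U_*)$, hence equality, and finally $\inf_{\mathcal N}E=\tfrac{\alpha}{2n}[E_*(U_*)]^{n/\alpha}$.

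The main obstacle I anticipate is precisely this upper-bound step: the extremizer $U_*(x)\sim|x|^{-(n-\alpha)}$ need not lie in $L^2(R^n)$ (hence not in $H^{\alpha/2}$) when $n\le 2\alpha$, so one cannot simply plug $U_*$ into the scaling argument. One must instead apply the concentration to compactly supported \emph{near}-minimizers of $E_*$ (a Brezis--Nirenberg--type truncation of $U_{*,\lambda}$) and verify that both the truncation error and the ``mass'' contribution $\lambda^{-2}$ disappear in the limit. By contrast, the Nehari collapse and the fiber computation are purely algebraic and routine.
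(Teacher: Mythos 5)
Your proof is correct, and it reaches the paper's value by a genuinely different and more elementary route than the paper's own argument. One preliminary point you handle well: taken literally, $\inf\{E(u):u\in H^{\alpha/2}(R^n)\setminus\{0\}\}=-\infty$ (scale $u\mapsto cu$, $c\to\infty$, since $\alpha^*>2$), and the paper's proof in fact computes the constrained level $m=\inf\{E(u):K(u)=0\}$; for the admissible parameter choice $t=1$, $s=0$ the constraint $K(u)=0$ is exactly your Nehari constraint $E'(u)u=0$, so your $\inf_{\mathcal N}E$ is the same quantity the paper evaluates. Beyond that the two arguments diverge. The paper runs the Ibrahim--Masmoudi--Nakanishi machinery: the two-parameter scalings $u_{t,s}^\lambda$, the functionals $K,K_0,L,L_0$, four constrained levels $m,\bar m,m_0,\bar m_0$, equality of all of them through density of the constraint sets under the limit $\lambda\to\infty$ at $(t',s')=(\frac{n-\alpha}{2},-1)$, and finally an explicit evaluation of $m_0$ at $t=0$. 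You instead collapse $E$ on the Nehari manifold to $\frac{\alpha}{2n}\|u\|_H^2$, reduce by the fiber map to the Bessel--Sobolev quotient $Q(u)=\|u\|_H^2/\|u\|_{\alpha^*}^2$, and identify $\inf Q=E_*(U_*)$ via the pointwise bound $(1+4\pi^2|\xi|^2)^{\alpha/2}\ge(4\pi^2|\xi|^2)^{\alpha/2}$ (lower bound) and scaling plus density of $C_c^\infty(R^n)$ in $\mathcal{D}^{\alpha/2,2}(R^n)$ (upper bound). The analytic core is shared: your $u_\lambda(x)=\lambda^{(n-\alpha)/2}u(\lambda x)$ is precisely the paper's family at $(t',s')=(\frac{n-\alpha}{2},-1)$, i.e.\ the limits (\ref{kmmo})--(\ref{lmmo}). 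What your route buys is economy and transparency: no $L$, $L_0$, no four-level bookkeeping, no appeal to the ``parameter independence'' of \cite{IMN}, and positivity/finiteness of the level is immediate from $Q\ge E_*(U_*)>0$. What the paper's heavier framework buys is generality: the IMN levels and the auxiliary functionals $L,L_0$ are designed for settings (e.g.\ scattering thresholds) where the energy does not collapse to a single term on the constraint set, and the parameter independence is of independent interest. Finally, your closing caveat is accurate and well judged: $U_*\notin L^2(R^n)$ when $n\le 2\alpha$, so it cannot be inserted into the Bessel functional directly; your density/truncation step is the right fix, and it is the same difficulty that the paper's density arguments absorb implicitly.
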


\begin{proof}
The ideas in \cite{Caz} and \cite{IMN} are used here.

Write the scaling function
$$
u_{t,s}^\lambda(x)=e^{t\lambda}u(e^{-s\lambda}x),
$$
where $\lambda \geq 0$, $t\geq 0$, $t^2+s^2>0$, $\mu:=2t+(n-2)s
\geq 0$, and $\nu:=2t+ns \geq 0$. Set $\bar{\mu}=\max\{\mu,\nu\}$.

By a simply calculation, we have
$$\begin{array}{ll}
&K(u):=\displaystyle\frac{dE(u_{t,s}^\lambda)}
{d\lambda}|_{\lambda=0}\\[3mm]
&=\displaystyle\frac{\mu}{2}\int_{R^n}
(1+4\pi^2|\xi|^2)^{\frac{\alpha}{2}}|\hat{u}(\xi)|^2d\xi
+\displaystyle\frac{s\alpha}{2}\int_{R^n}
(1+4\pi^2|\xi|^2)^{\frac{\alpha-2}{2}}|\hat{u}(\xi)|^2d\xi
-\frac{\mu}{2}\int_{R^n}u^{\alpha^*}(x)dx\\[3mm]
&=\displaystyle\frac{\nu}{2}\int_{R^n}
(1+4\pi^2|\xi|^2)^{\frac{\alpha}{2}}|\hat{u}(\xi)|^2d\xi
-\frac{s\alpha}{2}\int_{R^n}(1+4\pi^2|\xi|^2)^{\frac{\alpha-2}{2}}
4\pi^2|\xi|^2|\hat{u}(\xi)|^2d\xi\\[3mm]
&\quad -\displaystyle\frac{\mu}{2}\int_{R^n}u^{\alpha^*}(x)dx.
\end{array}
$$
Similarly, if we set
$$
E_0(u)=\frac{1}{2}\int_{R^n}(2\pi
|\xi|)^{\alpha}|\hat{u}(\xi)|^2d\xi
-\frac{1}{\alpha^*}\int_{R^n}u^{\alpha^*}(x)dx,
$$
then
$$
K_0(u):=\frac{dE_0(u_{t,s}^\lambda)}{d\lambda}|_{\lambda=0}
=\frac{\mu}{2}\int_{R^n}(2\pi|\xi|)^{\alpha}|\hat{u}(\xi)|^2d\xi
-\frac{\mu}{2}\int_{R^n}u^{\alpha^*}(x)dx.
$$
Write
$$\begin{array}{ll}
&L(u):=E(u)-\frac{K(u)}{\bar{\mu}}\\[3mm]
&\quad=\left \{
   \begin{array}{lll}
   &\displaystyle\frac{s\alpha}{2}\int_{R^n}
   (1+4\pi^2|\xi|^2)^{\frac{\alpha-2}{2}}
   4\pi^2|\xi|^2|\hat{u}(\xi)|^2d\xi+(\frac{\mu}{2\nu}-
   \frac{1}{\alpha^*})\int_{R^n}u^{\alpha^*}dx, \quad
   &\mu<\nu;\\[3mm]
   &-\displaystyle\frac{s\alpha}{2\mu}\int_{R^n}
   (1+4\pi^2|\xi|^2)^{\frac{\alpha-2}{2}}|\hat{u}(\xi)|^2d\xi
   +(\frac{1}{2}-\frac{1}{\alpha^*})\int_{R^n}u^{\alpha^*}dx, \quad
   &\mu \geq \nu,
   \end{array}
   \right.
\end{array}
$$
and
$$\begin{array}{ll}
& L_0(u):=E_0(u)-\frac{K_0(u)}{\bar{\mu}}\\[3mm]
&\quad=\left \{
   \begin{array}{lll}
   &\displaystyle(\frac{1}{2}-\frac{\mu}{2\nu})
   \int_{R^n}(2\pi|\xi|)^{\alpha}
   |\hat{u}(\xi)|^2d\xi+(\frac{\mu}{2\nu}-\frac{1}{\alpha^*})
   \int_{R^n}u^{\alpha^*}dx, \quad &\mu<\nu;\\[3mm]
   &\displaystyle(\frac{1}{2}-\frac{1}{\alpha^*})
   \int_{R^n}u^{\alpha^*}dx, \quad
   &\mu \geq \nu.
   \end{array}
   \right. \end{array}
$$
When $\mu<\nu$, $s>0$ and $\frac{\mu}{2\nu}>\frac{1}{\alpha^*}$;
when $\mu \geq \nu$, $s \leq 0$. Thus, $L(u), L_0(u) \geq 0$.

In view of the parameter independence (cf. \cite{IMN}), we can
define
$$\begin{array}{ll}
&m=\inf\{E(u);K(u)=0, u \in H^{\alpha/2}(R^n) \setminus \{0\}\};\\[3mm]
&\bar{m}=\inf\{L(u);K(u) \leq 0, u \in H^{\alpha/2}(R^n) \setminus \{0\}\};\\[3mm]
&m_0=\inf\{E_0(u);K_0(u)=0, u \in \mathcal{D}^{\alpha/2,2}(R^n)
\setminus \{0\}\};\\[3mm]
&\bar{m}_0=\inf\{L_0(u);K_0(u) <0, u \in
\mathcal{D}^{\alpha/2,2}(R^n) \setminus \{0\}\}.
\end{array}
$$
Clearly, $m=\bar{m}$. Set
$$\begin{array}{ll}
&F=\{u \in \mathcal{D}^{\alpha/2,2}(R^n) \setminus \{0\};
K_0(u)<0\},\\[3mm]
&\tilde{F}=\{u \in \mathcal{D}^{\alpha/2,2}(R^n) \setminus \{0\};
K_0(u) \leq 0\},\\[3mm]
&\bar{F}=\{u \in \mathcal{D}^{\alpha/2,2}(R^n) \setminus \{0\};
K_0(u)=0\}.
\end{array}
$$

We claim $F=\cup_{\lambda>0} \{u \in \mathcal{D}^{\alpha/2,2}(R^n) \setminus \{0\};
K_0(u_{t,s}^\lambda)=0\}$. Once it holds, then
$m_0=\bar{m}_0$.

In fact, for any $\lambda>0$, if $K_0(u_{t',s'}^\lambda)=0$, then
$$
e^{[2t'+(n-\alpha)s']\lambda}\int_{R^n}(2\pi|\xi|)^{\alpha}
|\hat{u}(\xi)|^2d\xi=e^{[2t'+(n-\alpha)s']\frac{n\lambda}{n-\alpha}}
\int_{R^n}u^{\alpha^*}dx.
$$
This leads to $K_0(u)<0$, and hence
$F\supset\cup_{\lambda>0} \{u \in \mathcal{D}^{\alpha/2,2}(R^n) \setminus \{0\};
K_0(u_{t,s}^\lambda)=0\}$.

On the other hand, for any $u \in F$, there holds $\int_{R^n}(2\pi|\xi|)^{\alpha}
|\hat{u}(\xi)|^2d\xi<\int_{R^n}u^{\alpha^*}dx$. Thus, we can find $\lambda_*>0$
such that
$$
e^{[2t'+(n-\alpha)s']\lambda_*}\int_{R^n}(2\pi|\xi|)^{\alpha}
|\hat{u}(\xi)|^2d\xi=e^{[2t'+(n-\alpha)s']\frac{n\lambda_*}{n-\alpha}}
\int_{R^n}u^{\alpha^*}dx.
$$
This shows $u \in \{u \in \mathcal{D}^{\alpha/2,2}(R^n) \setminus \{0\};
K_0(u_{t,s}^{\lambda_*})=0\}$.

In addition, it is easy to see that $F$ is dense in $\tilde{F}$, which implies
\begin{equation} \label{baog}
\bar{m}_0=\inf\{L_0(u);K_0(u) \leq 0, u \in
\mathcal{D}^{\alpha/2,2}(R^n) \setminus \{0\}\}.
\end{equation}

Set $G=\{u \in H^{\alpha/2}(R^n) \setminus \{0\}; K(u) \leq 0\}$.
Clearly, $G \subset \tilde{F}$.

Noting
$$\begin{array}{ll}
K(u_{t',s'}^\lambda)=&\displaystyle\frac{\mu}{2}e^{\lambda(2t'+(n-\alpha)s')}
\int_{R^n}(e^{2s\lambda}+4\pi^2|\xi|^2)^{\frac{\alpha}{2}}|\hat{u}(\xi)|^2d\xi\\[3mm]
&+\displaystyle\frac{s\alpha}{2}e^{\lambda(2t'+(n-\alpha+2)s')}\int_{R^n}
(e^{2s\lambda}+4\pi^2|\xi|^2)^{\frac{\alpha-2}{2}}|\hat{u}(\xi)|^2d\xi\\[3mm]
&-\displaystyle\frac{\mu}{2}e^{\lambda(\alpha^* t'+ns')}\int_{R^n}u^{\alpha^*}(x)dx.
\end{array}
$$
we can deduce by taking $t'=\frac{n-\alpha}{2}$ and $s'=-1$ that
\begin{equation} \label{kmmo}
\lim_{\lambda \to +\infty}K(u_{(n-\alpha)/2,-1}^\lambda)=K_0(u),
\end{equation}
Similarly, we also get
\begin{equation} \label{lmmo}
\lim_{\lambda \to +\infty}L(u_{(n-\alpha)/2,-1}^\lambda)=L_0(u),
\end{equation}
Clearly, (\ref{kmmo}) shows that $G$ is dense in $\tilde{F}$. Combining with (\ref{baog})
yields
$$
\bar{m}_0=\inf\{L_0(u);K(u) \leq 0, u \in
\mathcal{D}^{\alpha/2,2}(R^n) \setminus \{0\}\}.
$$
In addition, (\ref{lmmo}) implies
$$
\inf\{L_0(u);K(u) \leq 0, u \in \mathcal{D}^{\alpha/2,2}(R^n)
\setminus \{0\}\}=\bar{m}.
$$
Therefore, $\bar{m}_0=\bar{m}$.

The argument above shows that $m=\bar{m}=m_0=\bar{m}_0$.

Take $t=0$, then $\alpha^* \mu=2\nu$. Thus,
$$\begin{array}{ll}
&\quad m=m_0\\[3mm]
&=\inf\left\{\displaystyle\frac{\alpha}{2n}\int_{R^n}(2\pi|\xi|)^\alpha
|\hat{u}(\xi)|^2d\xi;\int_{R^n}(2\pi|\xi|)^\alpha
|\hat{u}(\xi)|^2d\xi=\int_{R^n}u^{\alpha^*}(x)dx\right\}\\[3mm]
&=\inf\left\{\displaystyle\frac{\alpha}{2n}\int_{R^n}(2\pi|\xi|)^\alpha
|\hat{u}(\xi)|^2d\xi \left[\frac{\int_{R^n}(2\pi|\xi|)^\alpha
|\hat{u}(\xi)|^2d\xi}{\int_{R^n}u^{\alpha^*}(x)dx}\right]^{\frac{n-\alpha}{\alpha}};u
\in \mathcal{D}^{\alpha/2,2}(R^n) \setminus \{0\}\right\}\\[3mm]
&=\displaystyle\frac{\alpha}{2n}\inf\left\{\left[\frac{\int_{R^n}(2\pi|\xi|)^\alpha
|\hat{u}(\xi)|^2d\xi}{(\int_{R^n}
u^{\alpha^*}(x)dx)^{(n-\alpha)/n}}\right]^{\frac{n}{\alpha}};u \in
\mathcal{D}^{\alpha/2,2}(R^n) \setminus \{0\}\right\}\\[3mm]
&=\displaystyle\frac{\alpha}{2n}c_*^{n/\alpha}.
\end{array}
$$
Here $c_*$ is the sharp constant of the inequality
$$
c(\int_{R^n}u^{\alpha^*}(x)dx)^{(n-\alpha)/n} \leq
\int_{R^n}|(-\Delta)^{\alpha/4}u(x)|^2dx.
$$
According to the classification result in \cite{CLO}, we know that
the corresponding minimizer in $\mathcal{D}^{\alpha/2,2}(R^n)
\setminus \{0\}$ is $U_*$.
\end{proof}

\section{Caffarelli-Kohn-Nirenberg type equations}

Consider the Caffarelli-Kohn-Nirenberg inequality
$$
(\int_{R^n}\frac{|u|^{q+1}}{|x|^{b(q+1)}}dx)^{p/(q+1)}
\leq C_{a,b}\int_{R^n}\frac{|\nabla u|^p}{|x|^{ap}}dx,
$$
where $n \geq 3$, $p>1$, $0 \leq a<\frac{n-p}{p}$, and
$a \leq b \leq a+1$. Since the scaling function $u_\mu(x)$ also satisfies this inequality,
by a simple calculation we can see $q=\frac{np}{n-p+p(b-a)}-1$.

The extremal functions in $\mathcal{D}_a^{1,p}(R^n) \setminus \{0\}$
can be obtained by investigating the functional
$$
E(u)=\int_{R^n}\frac{|\nabla u|^p}{|x|^{ap}}dx
(\int_{R^n}\frac{|u|^{q+1}}{|x|^{b(q+1)}}dx)^{-p/(q+1)}.
$$
Here $\mathcal{D}_a^{1,p}(R^n)$ is the completion of $C_0^\infty(R^n)$
with respect to the norm $\||x|^{-a}\nabla u\|_{L^p(R^n)}$.
Clearly, the extremal function satisfies the Pohozaev identity
$
\frac{d}{d\mu}E(u(\frac{x}{\mu}))|_{\mu=1}=0.
$
Noting
$
E(u(\frac{x}{\mu}))=\mu^{n-p(a+1)-\frac{pn}{q+1}+pb}E(u(x)),
$
we also obtain $q=\frac{np}{n-p+p(b-a)}-1$.

Consider the Euler-Lagrange equation which the extremal function of $E(u)$ satisfies:
\begin{equation} \label{CKN}
-div(\frac{1}{|x|^{ap}}|\nabla u|^{p-2}\nabla u)
=\frac{1}{|x|^{b(q+1)}}u^q, \quad u>0~in ~R^n.
\end{equation}

By a direct calculation we also deduce that (\ref{CKN}) and the energy
$\int_{R^n}\frac{|u|^{q+1}}{|x|^{b(q+1)}}dx$ are invariant under
the scaling (\ref{scal}) if and only if $q=\frac{np}{n-p+p(b-a)}-1$.
If $a=b=0$, this exponent $q=\frac{np}{n-p}-1$
is the critical condition for the existence (cf. \cite{SZ}).

We consider the relation between the finite energy solutions and
the critical exponents.

\begin{theorem} \label{th4.1}
(1) If $u \in \mathcal{D}_a^{1,p}(R^n)$ is a weak solution of
(\ref{CKN}), then $|x|^{-b}u \in L^{q+1}(R^n)$. Moreover, if $u
\in C^2(R^n)$, then
\begin{equation} \label{eneq}
\||x|^{-a}\nabla u\|_{L^p(R^n)}^p
=\||x|^{-b}u\|_{L^{q+1}(R^n)}^{q+1}.
\end{equation}

(2) On the contrary, assume $u \in C^2(R^n)$ solves (\ref{CKN}),
and $|x|^{-b}u \in L^{q+1}(R^n)$. If
$$
\int_{R^n}(\frac{u}{|x|^b})^{\frac{np}{n+p(b-a-1)}}dx<\infty.
$$
then $u \in \mathcal{D}_a^{1,p}(R^n)$ and (\ref{eneq}) still
holds.
\end{theorem}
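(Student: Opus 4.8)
The plan is to mirror the scheme of Proposition \ref{prop2.2}, replacing the Laplacian by the weighted $p$-Laplacian and carrying the weights $|x|^{-ap}$ and $|x|^{-b(q+1)}$ through every integration by parts. For part (1), the membership $|x|^{-b}u \in L^{q+1}(R^n)$ is immediate from the Caffarelli--Kohn--Nirenberg inequality: since $u\in\mathcal{D}_a^{1,p}(R^n)$ gives $\||x|^{-a}\nabla u\|_{L^p(R^n)}<\infty$, the inequality bounds $\||x|^{-b}u\|_{L^{q+1}(R^n)}$ by a constant multiple of $\||x|^{-a}\nabla u\|_{L^p(R^n)}$, which is finite. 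To obtain the energy identity (\ref{eneq}) when $u\in C^2(R^n)$, I would multiply (\ref{CKN}) by $u$ and integrate over $B_R(0)$; integration by parts produces $\int_{B_R}|x|^{-ap}|\nabla u|^p\,dx$, the right-hand term $\int_{B_R}|x|^{-b(q+1)}u^{q+1}\,dx$, and a boundary term $\int_{\partial B_R}|x|^{-ap}|\nabla u|^{p-2}u\,\partial_\nu u\,ds$.

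The crux is to kill that boundary term along a well-chosen sequence. Writing the two finite integrals $\int_{R^n}|x|^{-ap}|\nabla u|^p\,dx$ and $\int_{R^n}|x|^{-b(q+1)}u^{q+1}\,dx$ in polar coordinates, a Fubini argument yields a radius sequence $R=R_j\to\infty$ along which both $R_j\int_{\partial B_{R_j}}|x|^{-ap}|\nabla u|^p\,ds\to0$ and $R_j\int_{\partial B_{R_j}}|x|^{-b(q+1)}u^{q+1}\,ds\to0$. On $\partial B_R$ one factors the boundary integrand as $R^{b-a}\bigl(|x|^{-a(p-1)}|\nabla u|^{p-1}\bigr)\bigl(|x|^{-b}u\bigr)$ and applies the three-factor H\"older inequality, with exponents $\tfrac{p}{p-1}$, $q+1$ and the conjugate exponent $r$ of the constant factor, where $\tfrac1r=\tfrac1p-\tfrac1{q+1}\ge0$; this is admissible precisely because $q+1\ge p$, which is equivalent to $b-a\le1$. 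The resulting bound is a power of $R_j$ times $\bigl(R_j\int_{\partial B_{R_j}}|x|^{-ap}|\nabla u|^p\,ds\bigr)^{(p-1)/p}\bigl(R_j\int_{\partial B_{R_j}}|x|^{-b(q+1)}u^{q+1}\,ds\bigr)^{1/(q+1)}$, and the critical relation $q+1=\tfrac{np}{n-p+p(b-a)}$ makes the exponent of $R_j$ vanish identically. Hence the boundary term tends to $0$ and (\ref{eneq}) follows.

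For part (2), I would use the cut-off $\zeta_R$ of (\ref{cut}), test (\ref{CKN}) by $u\zeta_R^p$, and expand $\nabla(u\zeta_R^p)$. Applying Young's inequality (with exponents $\tfrac{p}{p-1}$ and $p$) to the cross term $\int|x|^{-ap}|\nabla u|^{p-2}\nabla u\cdot\nabla\zeta_R\,u\zeta_R^{p-1}\,dx$ and absorbing the gradient part on the left gives $\int|x|^{-ap}|\nabla u|^p\zeta_R^p\,dx\le C\int|x|^{-b(q+1)}u^{q+1}\zeta_R^p\,dx+C\int|x|^{-ap}u^p|\nabla\zeta_R|^p\,dx$. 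The first integral is controlled by $\||x|^{-b}u\|_{L^{q+1}(R^n)}^{q+1}$. For the second, $|\nabla\zeta_R|\le C/R$ is supported in $R\le|x|\le2R$; writing $|x|^{-ap}u^p=(|x|^{-b}u)^p|x|^{(b-a)p}$ and applying H\"older with exponent $\tfrac{m}{p}$ for $m=\tfrac{np}{n+p(b-a-1)}$ (note $m\ge p$), together with the hypothesis $\int_{R^n}(|x|^{-b}u)^m\,dx<\infty$ and $|B_{2R}|\le CR^n$, shows that $R^{-p}\int_{R\le|x|\le2R}|x|^{-ap}u^p\,dx$ is bounded uniformly in $R$. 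Letting $R\to\infty$ and invoking monotone convergence ($\zeta_R^p\uparrow1$) gives $|x|^{-a}\nabla u\in L^p(R^n)$, i.e.\ $u\in\mathcal{D}_a^{1,p}(R^n)$; part (1) then yields (\ref{eneq}).

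The main obstacle in both parts is the exact cancellation of the powers of $R$, which is not a coincidence but is forced by the critical value of $q$ in part (1) and by the prescribed exponent $m=\tfrac{np}{n+p(b-a-1)}$ in part (2); verifying these two arithmetic identities is where the critical and integrability conditions enter decisively. The only other technical care needed is in the $p$-Laplacian manipulations---producing the good radial sequence $R_j$, and handling the boundary and cross terms by the H\"older and Young inequalities---where the structural constraint $b-a\le1$ (hence $q+1\ge p$ and $m\ge p$) is exactly what makes the exponents admissible.
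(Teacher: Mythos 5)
Your part (2) is essentially the paper's own Step 2 (test with $u\zeta_R^p$, Young's inequality, H\"older with exponent $m/p$ for $m=\frac{np}{n+p(b-a-1)}$ against the hypothesis integral, then let $R\to\infty$), and it is correct. The genuine gap is in part (1), and it is not a technicality. You invoke the Caffarelli--Kohn--Nirenberg inequality in the form $\||x|^{-b}u\|_{L^{q+1}(R^n)}\le C\||x|^{-a}\nabla u\|_{L^p(R^n)}$ for the exponent $q$ appearing in the equation. By the scaling argument at the start of Section 3, such an inequality can hold on $\mathcal{D}_a^{1,p}(R^n)$ only when $q+1=\frac{np}{n-p+p(b-a)}$; for any other $q$ it is simply false. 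But Theorem \ref{th4.1} is stated for a general exponent $q$, and it must be: its entire purpose is to supply the energy identity (\ref{eneq}) for arbitrary $q$, so that the Pohozaev argument in Theorem \ref{th4.2} can then \emph{force} $q$ to be critical. Your proof assumes criticality at the outset in two places: the $L^{q+1}$ membership, and the boundary-term estimate, where you state explicitly that ``the critical relation $q+1=\frac{np}{n-p+p(b-a)}$ makes the exponent of $R_j$ vanish.'' Indeed your power of $R_j$ is $b-a-1+\frac{n}{p}-\frac{n}{q+1}$, which vanishes only at the critical $q$ and is strictly positive for supercritical $q$, so the boundary term is not controlled there. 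As written, your argument proves part (1) only in the critical case, which would make its application in Theorem \ref{th4.2} circular.

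The paper's proof avoids this by never estimating against the weighted $L^{q+1}$ norm; it uses instead the one exponent for which the CKN inequality genuinely holds on $\mathcal{D}_a^{1,p}(R^n)$, namely $m=\frac{np}{n-p(1+a-b)}$. For the membership claim, it tests (\ref{CKN}) with $u\zeta_R^p$ (exactly as in your part (2)), applies Young's inequality, and bounds $\int_{R^n}|x|^{-ap}u^p|\nabla\zeta_R|^p\,dx$ by H\"older against $\int_{R^n}\bigl(\frac{u}{|x|^b}\bigr)^m dx$, which is finite by the genuine CKN inequality; this gives a bound on $\int_{R^n}|x|^{-b(q+1)}u^{q+1}\zeta_R^p\,dx$ uniform in $R$, hence $|x|^{-b}u\in L^{q+1}(R^n)$ for \emph{every} $q$. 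For the boundary term in the energy identity, it uses the three-factor H\"older inequality with factors $\bigl(|x|^{-ap}|\nabla u|^p\bigr)^{1-\frac1p}$, $\bigl(\bigl(\frac{u}{|x|^b}\bigr)^m\bigr)^{\frac1p-\frac{1+a-b}{n}}$ and $\bigl(|x|^{\frac{(b-a)n}{a+1-b}}\bigr)^{\frac{a+1-b}{n}}$, choosing the sequence $R_j$ so that $R\int_{\partial B_R}|x|^{-ap}|\nabla u|^p ds$ and $R\int_{\partial B_R}\bigl(\frac{u}{|x|^b}\bigr)^m ds$ tend to zero (both ambient integrals are finite independently of $q$). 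With this factorization the total power of $R$ is identically zero for every $q$, so the boundary term dies without any relation between $q$ and $(n,p,a,b)$. Replacing $q+1$ by $m$ throughout your part (1) is exactly the repair your proof needs.
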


\begin{proof}
{\it Step 1.} If $u \in \mathcal{D}_a^{1,p}(R^n)$ is a weak
solution of (\ref{CKN}), testing by $u\zeta_R^p$ yields
\begin{equation} \label{songkui}
\int_{R^n}|x|^{-ap}|\nabla u|^{p-2} \nabla u \nabla(u\zeta_R^p)dx
=\int_{R^n}|x|^{-b(q+1)}u^{q+1}\zeta_R^pdx.
\end{equation}
By the Young inequality, it follows
$$
\int_{R^n}|x|^{-b(q+1)}u^{q+1}\zeta_R^pdx \leq
C\int_{R^n}|x|^{-ap}|\nabla u|^p\zeta_R^pdx +
C\int_{R^n}|x|^{-ap}u^p|\nabla \zeta_R|^pdx.
$$
Using the H\"older inequality and the Caffarelli-Kohn-Nirenberg
inequality, we get
$$\begin{array}{ll}
\displaystyle\int_{R^n}\frac{u^p}{|x|^{ap}}|\nabla \zeta_R|^pdx
&\leq \displaystyle\frac{C}{R^p}[\int_{R^n}(\frac{u}{|x|^b})^{\frac{np}{n+p(b-a-1)}}dx]^{1-\frac{p(1+a-b)}{n}}
(\int_{B_{2R}(0)}|x|^{\frac{(b-a)n}{a+1-b}}dx)^{\frac{p(1+a-b)}{n}}\\[3mm]
&\leq C\displaystyle[\int_{R^n}(\frac{u}{|x|^b})^{\frac{np}{n+p(b-a-1)}}dx]^{1-\frac{p(1+a-b)}{n}}
\leq C\int_{R^n}|x|^{-ap}|\nabla u|^pdx.
\end{array}
$$
Combining two results above and letting $R \to \infty$,
we obtain $|x|^{-b}u \in L^{q+1}(R^n)$.

In addition, if $u \in C^2(R^n)$ solves (\ref{CKN}), multiplying
by $u$ and integrating on $B_{2R}(0)$, we have
\begin{equation} \label{sunwu}
\int_{B_{2R}(0)}|x|^{-ap}|\nabla u|^pdx -\int_{\partial
B_{2R}(0)}|x|^{-ap}u|\nabla u|^{p-2}\partial_{\nu}uds
=\int_{B_{2R}(0)}|x|^{-b(q+1)}u^{q+1}dx.
\end{equation}
Using the H\"older inequality, we get
\begin{equation} \label{huangm}
\begin{array}{ll}
&\quad |\displaystyle\int_{\partial B_{2R}(0)}|x|^{-ap}u|\nabla u|^{p-2}\partial_{\nu}uds|\\[3mm]
&\leq (\displaystyle\int_{\partial B_{2R}(0)}|x|^{-ap}|\nabla
u|^pds)^{1-\frac{1}{p}}
[\int_{\partial B_{2R}}(\frac{u}{|x|^b})^{\frac{np}{n-p(a+1-b)}}ds]^{\frac{1}{p}-\frac{(1+a-b)}{n}}\\[3mm]
&\quad \cdot (\displaystyle\int_{\partial B_{2R}(0)}|x|^{\frac{(b-a)n}{a+1-b}}ds)^{\frac{a+1-b}{n}}\\[3mm]
&\leq C(R\displaystyle\int_{\partial B_{2R}(0)}|x|^{-ap}|\nabla
u|^pds)^{1-\frac{1}{p}}
[R\int_{\partial B_{2R}}(\frac{u}{|x|^b})^{\frac{np}{n-p(a+1-b)}}ds]^{\frac{1}{p}-\frac{(1+a-b)}{n}}\\[3mm]
&\quad \cdot
R^{[n-1+\frac{(b-a)n}{a+1-b}]\frac{a+1-b}{n}-1+\frac{1}{p}-\frac{1}{p}+\frac{a+1-b}{n}}.
\end{array}
\end{equation}
In view of $u \in \mathcal{D}_a^{1,p}(R^n)$, by the
Caffarelli-Kohn-Nirenberg inequality, there holds
$$
\int_{R^n}(\frac{u}{|x|^b})^{\frac{np}{n-p(a+1-b)}}dx<\infty,
$$
and hence we can find $R=R_j \to \infty$ such that
$$
R\int_{\partial B_{2R}(0)}|x|^{-ap}|\nabla u|^pds+ R\int_{\partial
B_{2R}(0)}(\frac{u}{|x|^b})^{\frac{np}{n-p(a+1-b)}}ds \to 0.
$$
Therefore, it follows from (\ref{huangm}) that
$$
|\int_{\partial B_{2R}(0)}|x|^{-ap}u|\nabla
u|^{p-2}\partial_{\nu}uds| \to 0
$$
as $R \to \infty$. Inserting this into (\ref{sunwu}) and letting
$R=R_j \to \infty$, we get (\ref{eneq}).

{\it Step 2.} If $u \in C^2(R^n)$, multiplying (\ref{CKN}) by
$u\zeta_R^p$ and integrating, we also obtain (\ref{songkui}).
Using the Young inequality, we get
$$
\int_{R^n}\frac{|\nabla u|^p}{|x|^{ap}}\zeta_R^pdx \leq
\int_{R^n}\frac{u^{q+1}}{|x|^{b(q+1)}}\zeta_R^pdx
+\frac{1}{2}\int_{R^n}\frac{|\nabla u|^p}{|x|^{ap}}\zeta_R^pdx
+C\int_{R^n}\frac{u^p}{|x|^{ap}}|\nabla \zeta_R|^pdx.
$$
This result, together with $|x|^{-b}u \in L^{q+1}(R^n)$, implies
\begin{equation} \label{huijia}
\int_{R^n}\frac{|\nabla u|^p}{|x|^{ap}}\zeta_R^pdx \leq
C+C\int_{R^n}\frac{u^p}{|x|^{ap}}|\nabla \zeta_R|^pdx.
\end{equation}
If $\int_{R^n}(\frac{u}{|x|^b})^{\frac{np}{n+p(b-a-1)}}dx<\infty$,
$$
\int_{R^n}\frac{u^p}{|x|^{ap}}|\nabla \zeta_R|^pdx \leq
\frac{1}{R^p}
(\int_{R^n}(\frac{u}{|x|^b})^{\frac{np}{n-p(a+1-b)}}dx)^{1-\frac{p(a+1-b)}{n}}
(\int_{B_{2R}}|x|^{\frac{n(b-a)}{a+1-b}}dx)^{\frac{p(a+1-b)}{n}}
\leq C.
$$
Thus, we can see $\int_{R^n}\frac{u^p}{|x|^{ap}}|\nabla
\zeta_R|^pdx<\infty$. Inserting this into (\ref{huijia}), we
have $|x|^{-a}\nabla u \in L^p(R^n)$, and hence $u \in \mathcal{D}_a^{1,p}(R^n)$.
Similar to Step 1, we also obtain
(\ref{eneq}). Theorem \ref{th4.1} is proved.
\end{proof}

\begin{theorem} \label{th4.2}
Eq. (\ref{CKN}) has a solution in $C^2(R^n) \cap \mathcal{D}_a^{1,p}(R^n)$
if and only if $q=\frac{np}{n-p+p(b-a)}-1$.
\end{theorem}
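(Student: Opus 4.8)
The plan is to follow the template of Theorem~\ref{th2.3} and establish the two implications separately, relying on Theorem~\ref{th4.1} to supply the finite-energy information needed to make the variational argument rigorous.

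The necessity is the direct half. Suppose (\ref{CKN}) has a solution $u\in C^2(R^n)\cap\mathcal D_a^{1,p}(R^n)$. By Theorem~\ref{th4.1}(1) we already know $|x|^{-b}u\in L^{q+1}(R^n)$ and that the energy identity (\ref{eneq}) holds, so $u$ is a finite-energy critical point of the functional $E$. Consequently the dilation direction $x\cdot\nabla u$ is an admissible variation and the Pohozaev identity $\big[\frac{d}{d\mu}E(u(\frac{x}{\mu}))\big]_{\mu=1}=0$ is available. Inserting the scaling relation already recorded in the text,
$$
E\Big(u\big(\tfrac{x}{\mu}\big)\Big)=\mu^{\,n-p(a+1)-\frac{pn}{q+1}+pb}\,E(u(x)),
$$
and using $E(u)\neq 0$, the vanishing of the derivative forces the exponent to vanish, i.e. $n-p(a+1)-\frac{pn}{q+1}+pb=0$, which is exactly $q=\frac{np}{n-p+p(b-a)}-1$.

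For the sufficiency I would exhibit an explicit radial solution generalizing the Hardy--Sobolev profile (\ref{clsf}). The candidate is
$$
u(x)=c\left(\frac{d}{d^2+|x|^{\gamma}}\right)^{\delta},\qquad
\gamma=\frac{p(1+a)-b(q+1)}{p-1},\qquad \delta=\frac{p}{q-p+1},
$$
with $c,d>0$. Writing $r=|x|$ and using that for radial $u$ the operator in (\ref{CKN}) reduces to $-r^{1-n}\frac{d}{dr}\big(r^{n-1-ap}|u'|^{p-2}u'\big)$, substituting the ansatz collapses the equation to an algebraic matching of the powers of $r$ and of $d^2+r^{\gamma}$; this matching succeeds precisely when $q=\frac{np}{n-p+p(b-a)}-1$ (the two exponents $\gamma,\delta$ above are forced by this condition), and a final choice of $c$ in terms of $d,n,p,a,b$ balances the constants. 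I would then read off the decay $u\sim|x|^{-\gamma\delta}$ with $\gamma\delta=\frac{n-p(1+a)}{p-1}$ and check directly that $|x|^{-a}\nabla u\in L^p(R^n)$, so that $u\in\mathcal D_a^{1,p}(R^n)$. As an alternative independent of the explicit profile, one may invoke the attainability of the Caffarelli--Kohn--Nirenberg inequality in the critical case: a minimizer of $E$ is then a weak solution in $\mathcal D_a^{1,p}(R^n)$.

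The main obstacle I anticipate is in the sufficiency, specifically the claimed $C^2(R^n)$ regularity at the origin. The natural profile involves $|x|^{\gamma}$, which is generally not twice differentiable at $x=0$ (already visible in the Hardy--Sobolev case, where $\gamma=2-t\in(0,2)$), so the solution is $C^2(R^n\setminus\{0\})$ and one must argue the membership and regularity at the origin with care --- either by checking that the singular weight $|x|^{-ap}$ renders the equation satisfied in the appropriate weak sense across $0$, or by restricting to the parameter range where the profile is genuinely regular. The necessity, by contrast, is essentially algebraic once Theorem~\ref{th4.1} provides the finite energy and the sequence $R_j\to\infty$ along which the boundary integrals vanish, so no new difficulty arises there.
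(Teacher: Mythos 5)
Your sufficiency argument coincides with the paper's. Once the critical condition is imposed, your exponents are exactly those of the extremal function $U_{a,b}$ displayed in the paper's proof: your $\delta=\frac{p}{q-p+1}$ equals $\frac{n-p(1+a-b)}{p(1+a-b)}$ and your $\gamma=\frac{p(1+a)-b(q+1)}{p-1}$ equals $\frac{p(n-p-pa)(1+a-b)}{(p-1)[n-(1+a-b)p]}$, so your ansatz is the paper's profile up to normalization; the alternative via attainability of the Caffarelli--Kohn--Nirenberg inequality is also consistent with the paper, which simply cites that extremal. Your warning about $C^2$ regularity at the origin is well taken and is actually a defect of the paper's own proof, not of yours: the paper asserts $C^2(R^n)$ membership without comment, although $|x|^{\gamma}$ need not be twice differentiable at $0$ (already for $a=b=0$ and $p>2$ the profile is only $C^{1,\beta}$ there).

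The necessity, however, has a genuine gap at the sentence claiming that, because $u$ is a finite-energy critical point of $E$, ``the dilation direction $x\cdot\nabla u$ is an admissible variation and the Pohozaev identity is available.'' Criticality of $u$ in $\mathcal{D}_a^{1,p}(R^n)$ does not by itself justify $\bigl[\frac{d}{d\mu}E(u(\frac{x}{\mu}))\bigr]_{\mu=1}=0$: the chain rule requires the dilation path $\mu\mapsto u(\cdot/\mu)$ to be differentiable in the $\mathcal{D}_a^{1,p}$ norm with derivative $-x\cdot\nabla u$, which in turn requires at least $|x|^{-a}\nabla(x\cdot\nabla u)\in L^p(R^n)$ --- decay information on $D^2u$ at infinity that neither the hypotheses nor Theorem \ref{th4.1} provide. (The paper allows itself this formal shortcut in Theorem \ref{th2.3} for the semilinear case, but for the weighted $p$-Laplacian it does not.) The paper's proof of Theorem \ref{th4.2} is structured precisely to close this gap: it multiplies (\ref{CKN}) by $x\cdot\nabla u$ and integrates by parts over $B_R(0)$, obtaining (\ref{xiuli})--(\ref{xiuli2}) with explicit boundary terms, then uses the finite energy guaranteed by Theorem \ref{th4.1} to select radii $R_j\to\infty$ along which those boundary terms vanish (see (\ref{xiuli3})), and finally combines the resulting identity with (\ref{eneq}) to get $1-\frac{n-ap}{p}=-\frac{n-b(q+1)}{q+1}$, which is the critical exponent. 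To repair your proof, replace the formal scaling-derivative step by this multiplier computation; the rest of what you wrote then goes through unchanged.
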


\begin{proof}
If $q=\frac{np}{n-p+p(b-a)}-1$, we know that the following extremal function of
the Caffarelli-Kohn-Nirenberg inequality is a finite energy solution
$$
U_{a,b}(x)=c_0(\frac{n-p-pa}{1+|x|^{\frac{p(n-p-pa)(1+a-b)}
{(p-1)[n-(1+a-b)p]}}})^{\frac{n-p(1+a-b)}{p(1+a-b)}}
$$
with $c_0=[n(p-1)^{1-p}(n-p(1+a-b))^{-1}]^{\frac{n-p(1+a-b)}{p^2(1+a-b)}}$.
We verify the sufficiency.

Next, we prove the necessity.
Multiplying (\ref{CKN}) by $(x\cdot \nabla u)$ and integrating on $B_R(0)$, we get
\begin{equation} \label{xiuli}
\int_{B_R(0)}\frac{|\nabla u|^{p-2}}{|x|^{ap}}\nabla u \nabla(x\cdot \nabla u)dx
-\int_{\partial B_{R}(0)}\frac{|\nabla u|^{p-2}}{|x|^{ap}}|\partial_{\nu}u|^2ds
=\int_{B_{R}(0)}\frac{u^{q}(x\cdot \nabla u)}{|x|^{b(q+1)}}dx.
\end{equation}
Integrating by parts, we obtain
\begin{equation} \label{xiuli1}
\begin{array}{ll}
&\displaystyle\int_{B_R(0)}\frac{|\nabla u|^{p-2}}{|x|^{ap}}\nabla u \nabla(x\cdot \nabla u)dx
=\int_{B_R(0)}|x|^{-ap}[|\nabla u|^{p}+\frac{1}{p}x\cdot \nabla(|\nabla u|^p)]dx\\[3mm]
&=\displaystyle\int_{B_R(0)}\frac{|\nabla u|^p}{|x|^{ap}}
+\frac{R}{p}\int_{\partial B_{R}(0)}\frac{|\nabla u|^{p}}{|x|^{ap}}ds
-\frac{n-ap}{p}\int_{B_R(0)}\frac{|\nabla u|^{p}}{|x|^{ap}}dx,
\end{array}
\end{equation}
and
\begin{equation} \label{xiuli2}
\begin{array}{ll}
&\displaystyle\int_{B_{R}(0)}\frac{u^{q}(x\cdot \nabla u)}{|x|^{b(q+1)}}dx
=\frac{1}{q+1}\int_{B_{R}(0)}\frac{x\cdot \nabla u^{q+1}}{|x|^{b(q+1)}}dx\\[3mm]
&=\displaystyle\frac{R}{q+1}\int_{\partial B_{R}(0)}\frac{u^{q+1}}{|x|^{b(q+1)}}ds
-\frac{n-b(q+1)}{q+1}\int_{B_R(0)}\frac{u^{q+1}}{|x|^{b(q+1)}}dx.
\end{array}
\end{equation}
According to the first conclusion of Theorem \ref{th4.1},
$\frac{\nabla u}{|x|^a} \in L^p(R^n)$ implies $\frac{u}{|x|^b} \in
L^{q+1}(R^n)$. We can find $R=R_j \to \infty$ such that
\begin{equation} \label{xiuli3}
R\int_{\partial B_{R}(0)}\frac{|\nabla u|^{p}}{|x|^{ap}}ds
+R\int_{\partial B_{R}(0)}\frac{u^{q+1}}{|x|^{b(q+1)}}ds \to 0.
\end{equation}

Inserting (\ref{xiuli1}) and (\ref{xiuli2}) into (\ref{xiuli}), and using
(\ref{xiuli3}), we have
$$
(1-\frac{n-ap}{p})\int_{R^n}\frac{|\nabla u|^{p}}{|x|^{ap}}dx
=-\frac{n-b(q+1)}{q+1}\int_{R^n}\frac{u^{q+1}}{|x|^{b(q+1)}}dx.
$$
Combining with (\ref{eneq}) yields $1-\frac{n-ap}{p} =-\frac{n-b(q+1)}{q+1}$,
which implies $q=\frac{np}{n-p+p(b-a)}-1$.
\end{proof}

Next we consider the system with weight
\begin{equation} \label{CKNsy}
 \left \{
   \begin{array}{l}
      -div(|x|^{-ap}|\nabla u|^{p-2}\nabla u)
=|x|^{-b(q_2+1)}v^{q_2}, \quad u>0~in ~R^n;\\
      -div(|x|^{-ap}|\nabla v|^{p-2}\nabla v)
=|x|^{-b(q_1+1)}u^{q_1}, \quad v>0~in ~R^n.
   \end{array}
   \right.
\end{equation}

\begin{theorem} \label{th4.7}
Under the scaling transformation (\ref{scalsy}), the system
(\ref{CKNsy}) and the energy functionals
$$
\int_{R^n}\frac{u^{q_1+1}}{|x|^{b(q_1+1)}}dx\quad  and
\int_{R^n}\frac{v^{q_2+1}}{|x|^{b(q_2+1)}}dx,
$$
are invariant if and only if one of the two degenerate
conditions holds: $p=2$ and $q_1=q_2$.

Moreover, if $p=2$, the critical condition is
\begin{equation} \label{cdd1}
\frac{1}{q_1+1}+\frac{1}{q_2+1}=\frac{n+2(b-a-1)}{n}.
\end{equation}
If $q_1=q_2$, the critical condition is $q_1=q_2=\frac{np}{n-p+p(b-a)}-1$.
In addition, (\ref{CKNsy}) is reduced to a single equation
(\ref{CKN}) in the weak sense (i.e. $u,v \in \mathcal{D}_a^{1,p}(R^n)$).
\end{theorem}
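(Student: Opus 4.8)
The plan is to mirror the scaling argument used for Proposition \ref{prop2.4}, running the computation separately for each of the two equations of the system and for each of the two energy functionals, and then combining the resulting exponent relations. First I would record the effect of the scaling (\ref{scalsy}) on the weighted $p$-Laplacian. Writing $y=\mu x$ and using $\nabla u_\mu(x)=\mu^{\sigma_1+1}(\nabla u)(\mu x)$ together with $|x|^{-ap}=\mu^{ap}|y|^{-ap}$, a direct calculation gives
$$-\text{div}\big(|x|^{-ap}|\nabla u_\mu|^{p-2}\nabla u_\mu\big)=\mu^{(\sigma_1+1)(p-1)+ap+1}|y|^{-b(q_2+1)}v^{q_2}(y),$$
while the right-hand side of the first equation, expressed through $v_\mu$, equals $\mu^{-b(q_2+1)-\sigma_2 q_2}|x|^{-b(q_2+1)}v_\mu^{q_2}(x)$. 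Invariance of the first equation therefore forces $(\sigma_1+1)(p-1)+ap+1-b(q_2+1)-\sigma_2 q_2=0$, and the second equation yields the symmetric relation with the indices $1,2$ interchanged. By the same computation as in (\ref{energy}), invariance of the two energy functionals forces $\sigma_i=\frac{n}{q_i+1}-b$ for $i=1,2$.

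Next I would substitute these values of $\sigma_1,\sigma_2$ into the two equation relations. After collecting terms (the $b$-dependent pieces collapse since $-b(q_j+1)+bq_j=-b$, and $\frac{nq_j}{q_j+1}=n-\frac{n}{q_j+1}$), each relation takes the form $\frac{n(p-1)}{q_i+1}+\frac{n}{q_j+1}+R=0$, where $R=-(b-1)(p-1)+ap+1-b-n$ is a remainder independent of which equation one started from. Subtracting the two relations cancels $R$ and leaves
$$n(p-2)\Big(\frac{1}{q_1+1}-\frac{1}{q_2+1}\Big)=0,$$
which is exactly the claimed dichotomy: either $p=2$, or $q_1=q_2$. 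The sufficiency direction is immediate, since the same calculation read backwards shows all the $\mu$-exponents vanish once either condition is imposed.

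It then remains to extract the explicit critical condition in each case. Setting $p=2$ in either relation, the remainder becomes $2(a-b+1)$, and clearing it gives $\frac{1}{q_1+1}+\frac{1}{q_2+1}=\frac{n+2(b-a-1)}{n}$, which is (\ref{cdd1}). Setting $q_1=q_2=q$ makes the two relations coincide; combining the two fractional terms into $\frac{np}{q+1}$ and simplifying the remainder to $p(1+a-b)$ yields $q=\frac{np}{n-p+p(b-a)}-1$, the same exponent as in Theorem \ref{th4.2}. For the final assertion that (\ref{CKNsy}) reduces to (\ref{CKN}) in the weak sense, I would subtract the two equations (now with common exponent $q$), test the difference against $u-v\in\mathcal{D}_a^{1,p}(R^n)$, and exploit two monotonicities: the elementary inequality for $\xi\mapsto|\xi|^{p-2}\xi$ makes the left-hand integral nonnegative, while $\int_{R^n}|x|^{-b(q+1)}(v^q-u^q)(u-v)\,dx\le0$ because $s\mapsto s^q$ is increasing. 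Hence both sides vanish, $\nabla u=\nabla v$ a.e., and the decay encoded in $\mathcal{D}_a^{1,p}(R^n)$ forces $u=v$, so each equation becomes (\ref{CKN}).

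The scaling bookkeeping and the algebra leading to the dichotomy and to the two critical conditions are routine. The main obstacle is the last reduction step: one must justify that testing against $u-v$ is admissible — i.e. that the boundary terms produced in the integration by parts vanish along a suitable sequence $R_j\to\infty$, exactly as in the proof of Theorem \ref{th4.1} — before the weighted monotonicity estimate can be applied to conclude $u\equiv v$.
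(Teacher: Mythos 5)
Your proposal is correct and follows essentially the same route as the paper's proof: the identical scaling bookkeeping produces the two exponent relations and the energy constraints $\sigma_i=\frac{n}{q_i+1}-b$, and your subtraction of the two relations to get $n(p-2)\bigl(\frac{1}{q_1+1}-\frac{1}{q_2+1}\bigr)=0$ is an equivalent rearrangement of the paper's step of solving for $\sigma_1,\sigma_2$ and reading off $(q_1+1)(q_2+p-1)=(q_2+1)(q_1+p-1)$, i.e.\ $(q_1-q_2)(p-2)=0$; the two critical conditions and the monotonicity argument forcing $u\equiv v$ also coincide with the paper's. The only cosmetic difference is in the last step, where the paper justifies taking $\phi=u-v$ by density of $C_0^\infty(R^n)$ in $\mathcal{D}_a^{1,p}(R^n)$ rather than by tracking boundary terms along a sequence $R_j\to\infty$ as you suggest; both justifications are standard.
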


\begin{proof}
{\it Step 1.} By calculation, we have
$$
-div[|x|^{-ap}|\nabla u_\mu(x)|^{p-2}\nabla u_\mu(x)]
=-\mu^{(p-1)\sigma_1+(a+1)p-b(q_2+1)-\sigma_2q_2}
|x|^{-b(q_2+1)}v_\mu^{q_2}(x).
$$
If (\ref{CKNsy}) is invariant under the scaling (\ref{scalsy}),
there hold $(p-1)\sigma_1+(a+1)p-b(q_2+1)=\sigma_2q_2$
and $(p-1)\sigma_2+(a+1)p-b(q_1+1)-\sigma_1q_1$.
Thus,
$$
\sigma_1=\frac{p(q_2+p-1)(a+1-b)}{q_1q_2-(p-1)^2}-b,\quad
\sigma_2=\frac{p(q_1+p-1)(a+1-b)}{q_1q_2-(p-1)^2}-b.
$$
In addition, the energy functionals
$\int_{R^n}\frac{u^{q_1+1}}{|x|^{b(q_1+1)}}dx$ and
$\int_{R^n}\frac{v^{q_2+1}}{|x|^{b(q_2+1)}}dx$
are invariant implies $\sigma_1=\frac{n}{q_1+1}-b$ and
$\sigma_2=\frac{n}{q_2+1}-b$. Therefore,
\begin{equation} \label{liandeng}
\frac{n[q_1q_2-(p-1)^2]}{p(a+1-b)}
=(q_1+1)(q_2+p-1)=(q_2+1)(q_1+p-1).
\end{equation}
The latter equality implies $p=2$ or $q_1=q_2$.

On the contrary, the argument above also shows that if $p=2$ or $q_1=q_2$
holds, then system and the energy functionals are invariant under
the scaling (\ref{scalsy}).

{\it Step 2.} If $p=2$, (\ref{CKNsy}) becomes a Laplace system.
Thus, the former equality of (\ref{liandeng}) implies (\ref{cdd1}).
In particular, if $a=0$, it is identical with (1) of Remark 2.1.

If $q_1=q_2$, we denote them by $q$. The former equality of (\ref{liandeng}) implies
$q=\frac{np}{n-p+p(b-a)}-1$. In addition, by the definition of weak solutions, for any
$\phi \in C_0^\infty(R^n)$, there holds
$$
\int_{R^n}|x|^{-ap}(|\nabla u|^{p-2}\nabla u-|\nabla u|^{p-2}\nabla u)
\nabla\phi dx=\int_{R^n}|x|^{-b(q+1)}(v^q-u^q)\phi dx.
$$
Since $C_0^\infty(R^n)$ is dense in $\mathcal{D}_a^{1,p}(R^n)$,
we can take $\phi=u-v$. Noting the monotonicity inequality
$
(|\nabla u|^{p-2}\nabla u-|\nabla u|^{p-2}\nabla u)
\cdot \nabla(u-v) \geq 0,
$
we get
$$
\int_{R^n}|x|^{-b(q+1)}(u^q-v^q)(u-v) dx \leq 0.
$$
By the integral mean value theorem we have $u=v$ a.e. on $R^n$. Namely, (\ref{CKNsy})
is reduced to (\ref{CKN}).
\end{proof}

\paragraph{Remark 3.1.} Different from the cases $p=2$ and $q_1=q_2$,
(\ref{CKNsy}) has no variational structure.

\section{k-Hessian equations}

Tso \cite{Tso1} obtained the critical exponent and the existence/nonexistence
results for the k-Hessian equation on the bounded domain. Other related work can be
seen in \cite{CW} and the references therein.
Here we consider the following k-Hessian equation on $R^n$
\begin{equation} \label{hesse}
F_k(D^2u)=(-u)^q, \quad u<0~in~R^n.
\end{equation}
Here $F_k[D^2u]=S_k(\lambda(D^2u))$,
$\lambda(D^2u)=(\lambda_1,\lambda_2,\cdots,\lambda_n)$ with
$\lambda_i$ being eigenvalues of the Hessian matrix $(D^2u)$, and
$S_k(\cdot)$ is the $k$-th symmetric function:
$$
S_k(\lambda)=\sum_{1\leq i_1<\cdots<i_k \leq n}
\lambda_{i_1}\lambda_{i_2}\cdots\lambda_{i_k}.
$$
Two special cases are $F_1[D^2u]=\Delta u$ and $F_n[D^2u]=det(D^2u)$.

In this section, we assume $1<k<n/2$,

When $q$ is not larger than the Serrin exponent: $q \leq \frac{nk}{n-2k}$, (\ref{hesse})
has no negative solution (cf. \cite{Ou}, \cite{PV} and \cite{PV2}). Furthermore,
if $R(x)$ is double bounded, namely there exists $C>1$ such that
$C^{-1} \leq R(x) \leq C$, we can see the analogous result still holds.

\begin{theorem} \label{th4.11}
If $q\leq \frac{nk}{n-2k}$, then
\begin{equation} \label{hessian}
F_k(D^2u)=R(x)(-u)^q, \quad u<0~in~R^n,
\end{equation}
has no negative solution satisfying $\inf_{R^n}(-u)=0$
for any double bounded coefficient $R(x)$.
\end{theorem}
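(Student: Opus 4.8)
The plan is to reduce the double-bounded problem to the constant-coefficient nonexistence already recorded in the Serrin range $q\le\frac{nk}{n-2k}$ (cf. \cite{Ou}, \cite{PV}, \cite{PV2}). The only feature of double boundedness I would actually exploit is the lower bound: since $R(x)\ge C^{-1}>0$, any negative solution $u$ of (\ref{hessian}) automatically satisfies the one-sided differential inequality
$$
F_k(D^2u)\ge C^{-1}(-u)^q \quad \text{in } R^n,
$$
while still obeying $u<0$ and $\inf_{R^n}(-u)=0$. Hence it suffices to show that this inequality admits no such solution. (The upper bound $R(x)\le C$ is never needed; only the positivity of the lower bound enters, which is all that double boundedness supplies on the relevant side.)

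To remove the constant $C^{-1}$ I would rescale $u=\kappa w$ with a constant $\kappa>0$. Because $F_k(D^2(\kappa w))=\kappa^k F_k(D^2w)$ and $(-\kappa w)^q=\kappa^q(-w)^q$, the inequality becomes
$$
F_k(D^2w)\ge C^{-1}\kappa^{q-k}(-w)^q .
$$
When $q\neq k$ I choose $\kappa$ so that $C^{-1}\kappa^{q-k}=1$, i.e. $\kappa=C^{1/(q-k)}$, turning this into $F_k(D^2w)\ge(-w)^q$. The scaling multiplies all eigenvalues of $D^2u$ by the positive factor $\kappa$, so the admissibility condition that makes $F_k$ elliptic is preserved, $w$ remains negative, and $\inf_{R^n}(-w)=\kappa\,\inf_{R^n}(-u)=0$. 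For the borderline value $q=k$ the scaling leaves the coefficient untouched, and I would instead invoke the nonexistence directly for $F_k(D^2w)\ge c(-w)^q$ with the fixed constant $c=C^{-1}>0$; this is legitimate because in the strictly subcritical regime $q=k<\frac{nk}{n-2k}$ the Liouville property is insensitive to a positive multiplicative constant.

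Finally I would apply the constant-coefficient nonexistence: for $q\le\frac{nk}{n-2k}$ there is no negative admissible function on $R^n$ with $\inf_{R^n}(-w)=0$ satisfying $F_k(D^2w)\ge(-w)^q$, which contradicts the existence of $w$, and hence of $u$. The main obstacle is precisely guaranteeing that \cite{Ou}, \cite{PV}, \cite{PV2} are available in this supersolution form, i.e. for the differential \emph{inequality} with a positive coefficient rather than only for the exact equation $F_k(D^2u)=(-u)^q$. If the cited statements are phrased only for the equation, I would fall back on their method of proof — the integral/capacitary test-function estimates combined with the Keller--Osserman-type a priori bound and the admissibility of $u$ — observing that those arguments use nothing beyond the one-sided bound $F_k(D^2u)\ge C^{-1}(-u)^q$ together with $\inf_{R^n}(-u)=0$, so they transfer verbatim to the double-bounded setting.
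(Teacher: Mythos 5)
Your proof is correct, but it takes a genuinely different route from the paper's. You keep only the lower bound $R(x)\ge C^{-1}$, rescale the amplitude ($u=\kappa w$ with $\kappa=C^{1/(q-k)}$, which preserves $k$-admissibility, negativity and $\inf_{R^n}(-w)=0$ because $F_k$ is $k$-homogeneous), and so reduce to the constant-coefficient differential inequality $F_k(D^2w)\ge(-w)^q$, to which you apply \cite{Ou}, \cite{PV}, \cite{PV2}. The paper instead uses both bounds on $R$: by the Wolff potential estimates, a solution with $\inf_{R^n}(-u)=0$ satisfies the integral equation $-u(x)=K(x)\int_0^\infty\bigl[t^{2k-n}\int_{B_t(x)}(-u)^q(y)dy\bigr]^{1/k}\frac{dt}{t}$ with $K(x)$ double bounded (the hypothesis $\inf_{R^n}(-u)=0$ is exactly what suppresses the additive constant in that estimate), and then cites the Liouville theorem of \cite{LeiLi} for this integral equation with double bounded coefficient. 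Your reduction is more elementary, bypasses the integral representation entirely, and shows in passing that the upper bound $R(x)\le C$ is superfluous for nonexistence; its only exposure is the point you flag yourself, namely that the cited results must cover the inequality rather than just the equation --- which they do (Ou's paper treats precisely the Hessian inequality, and the Phuc--Verbitsky nonexistence arguments rest on one-sided Wolff potential lower bounds), so your fallback never has to be deployed. One spot deserves tightening: at $q=k$ you appeal to an unproved assertion that the Liouville property is insensitive to positive constants; the clean fix is the spatial dilation $w(\lambda x)$, under which $F_k(D^2w(\lambda x))=\lambda^{2k}(F_k(D^2w))(\lambda x)$, so choosing $\lambda=c^{-1/(2k)}$ normalizes the constant for every $q$ (including $q=k$) while preserving all side conditions.
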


\begin{proof}
If (\ref{hessian}) has some negative solution $u$ satisfying
$\inf_{R^n}(-u)=0$ for some double bounded $R(x)$, then $-u$
solves an integral equation
$$
-u(x)=K(x)\int_0^\infty
\left[\frac{\int_{B_t(x)}(-u)^q(y)dy}{t^{n-2k}}\right]^{\frac{1}{k}}
\frac{dt}{t}, \quad u<0~in ~R^n,
$$
where $K(x)$ is also double bounded.
However, by the Wolff potential estimates (cf. \cite{LeiLi}), we
know this integral equation has no positive solution for any
double bounded fonction.
\end{proof}

Hereafter, we always assume that $q$ is larger than the Serrin
exponent $\frac{nk}{n-2k}$:
\begin{equation} \label{SERRIN}
q>\frac{nk}{n-2k}.
\end{equation}

\begin{theorem} \label{th4.12}
If (\ref{SERRIN}) holds, then (\ref{hessian}) has radial solutions
with the fast and the slow decay rates respectively
for some double bounded functions $R(x)$.
\end{theorem}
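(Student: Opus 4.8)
The plan is to reduce (\ref{hessian}) to an ordinary differential equation through the radial ansatz and then to realise each prescribed decay rate by choosing $R$ appropriately. For a radial function $u=u(r)$, $r=|x|$, the Hessian has eigenvalues $u''$ (once) and $u'/r$ ($n-1$ times), so a direct evaluation of $S_k$ yields the divergence form
\[
F_k(D^2u)=c_{n,k}\,r^{1-n}\bigl[r^{n-k}(u')^k\bigr]',\qquad c_{n,k}=\tfrac1k\binom{n-1}{k-1}.
\]
Hence (\ref{hessian}) becomes $c_{n,k}\,r^{1-n}[r^{n-k}(u')^k]'=R(r)(-u)^q$. Writing $\Phi:=r^{n-k}(u')^k$ turns this into the first order relation $c_{n,k}\Phi'(r)=r^{n-1}R(r)(-u)^q$, so that once a negative increasing profile $u$ (with $u'>0$ and $u\to0^-$) is prescribed, one simply reads off $R:=F_k(D^2u)/(-u)^q$; the problem reduces to producing profiles with the two decay rates for which this $R$ is double bounded and for which $u$ stays in the admissible cone $\Gamma_k$ so that $F_k(D^2u)>0$. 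A balancing argument identifies the candidate rates: $-u\sim r^{-\beta}$ forces $F_k(D^2u)\sim r^{-(\beta+2)k}$ while $(-u)^q\sim r^{-\beta q}$, and matching the exponents gives the slow rate $\beta_s=\frac{2k}{q-k}$, whereas the homogeneous equation $[r^{n-k}(u')^k]'=0$ gives the fast rate $\beta_f=\frac{n-2k}{k}$. Observe that (\ref{SERRIN}) is exactly the inequality $\beta_s<\beta_f$.

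For the slow-decay solution I would take the explicit profile $u(x)=-(1+|x|^2)^{-\gamma}$ with $\gamma=\frac{k}{q-k}$, so that $-u\sim r^{-2\gamma}=r^{-\beta_s}$. This $u$ is smooth, negative and tends to $0$; near the origin $D^2u\approx 2\gamma I$, so $u\in\Gamma_k$ there, and one checks admissibility persists for all $r$. Then $R:=F_k(D^2u)/(-u)^q$ is continuous and positive, and the exponent balance built into the choice of $\gamma$ makes $F_k(D^2u)$ and $(-u)^q$ decay at the common rate $r^{-2kq/(q-k)}$; together with the finite positive value $R(0)=\binom{n}{k}(2\gamma)^k$ this shows $R$ has finite positive limits at $0$ and $\infty$, hence is double bounded. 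This yields the slow solution with essentially no obstruction.

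The fast-decay solution is the main difficulty, and it is genuinely unavoidable that $R$ be non-constant here: the autonomous equation ($R\equiv1$) is invariant under $u\mapsto\mu^{\beta_s}u(\mu x)$, so all of its regular entire solutions are mutual rescalings and share one decay rate, namely the slow one, since a fast (finite-energy) entire solution is exactly what supercriticality forbids. Thus one must perturb $R$ to install the fast tail. Prescribing the pure power tail $-u\sim r^{-\beta_f}$ fails directly, because with $\beta=\beta_f$ one has $\Phi=r^{n-k}(u')^k\to\mathrm{const}$ and the leading term of $F_k(D^2u)=c_{n,k}r^{1-n}\Phi'$ cancels, so the true decay of $F_k(D^2u)$, and hence whether $R=F_k(D^2u)/(-u)^q$ stays bounded, is governed by the subleading correction of the profile. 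The plan is therefore to build a profile with the harmonic tail $r^{-\beta_f}$ together with a tuned lower-order correction chosen so that $F_k(D^2u)$ decays at the rate $r^{-\beta_f q}$ of $(-u)^q$, and then to set $R:=F_k(D^2u)/(-u)^q$.

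The hard part is controlling this cancellation quantitatively. I expect the bulk of the work to lie in three points: (i) expanding $\Phi'(r)$ through the cancelled leading order to pin down the exact decay of $F_k(D^2u)$ along the fast tail; (ii) showing the resulting $R$ is bounded below away from zero as well as above, the lower bound being the delicate estimate, especially in the transition region between the origin and the asymptotic regime; and (iii) verifying that the perturbed profile remains $k$-admissible on all of $R^n$. Once these are secured, the constructed pair $(u,R)$ furnishes the fast-decay solution and, with the slow solution above, completes the proof.
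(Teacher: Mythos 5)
Your slow-decay construction coincides with the paper's: the explicit profile $U(r)=-(1+r^2)^{-\theta}$ with $2\theta=\frac{2k}{q-k}$, substituted into the radial form (\ref{ode}), produces
$$
R(r)=(2\theta)^k\,\frac{C_{n-1}^{k-1}+C_{n-1}^k+[C_{n-1}^k-C_{n-1}^{k-1}(2\theta+1)]r^2}{1+r^2},
$$
and this is double bounded precisely because $C_{n-1}^k-C_{n-1}^{k-1}(2\theta+1)>0$, which is equivalent to $\frac{2k}{q-k}<\frac{n-2k}{k}$, i.e.\ to (\ref{SERRIN}). You should make this last point explicit (it is where Serrin's condition actually enters, and it also gives the positivity of $F_k(D^2u)$ that you wave at with ``one checks admissibility''), but the substance of this half is correct and identical to the paper's.

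The fast-decay half is a genuine gap. What you offer there is a program, not a proof: you explicitly defer the three steps that carry all the difficulty (the expansion of $\Phi'$ through the cancelled leading order, the lower bound on $R$, and $k$-admissibility of the perturbed profile), and none is carried out. Moreover, the difficulty is self-inflicted, because you are aiming at a stronger statement than the paper proves. The paper's proof of the fast case keeps the same ansatz (\ref{radial}) but takes $2\theta=\frac{n-2k}{k}$, so that $C_{n-1}^k-C_{n-1}^{k-1}(2\theta+1)=0$ and (\ref{yinan}) acquires an extra factor $(1+r^2)^{-1}$; matching exponents, $(\theta+1)k+1=\theta q$, then \emph{forces} $q=\frac{(n+2)k}{n-2k}$, a particular exponent satisfying (\ref{SERRIN}), and for that $q$ the function $R$ is a positive \emph{constant}. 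This weaker version is exactly what is used later: Theorem \ref{th4.5} invokes the fast-decay solution only at the critical exponent. Note that this also falsifies, in the setting the paper intends, your claim that it is ``genuinely unavoidable that $R$ be non-constant'' along the fast rate; that claim is relevant only for strictly supercritical $q$, i.e.\ only under your stronger reading of the theorem, and for that reading you would indeed have to complete the delicate perturbation-of-the-harmonic-tail construction you sketch (your scaling-invariance argument that all entire solutions of the autonomous equation are mutual rescalings is, incidentally, not a valid inference). As it stands, your proposal establishes only the slow-decay half of the statement.
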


\begin{proof}
Clearly, if the following ODE has solution $U(r)$
\begin{equation} \label{ode}
C_{n-1}^{k-1}(\frac{U_r}{r})^{k-1}U_{rr}+C_{n-1}^k(\frac{U_r}{r})^k=R(r)(-U)^q,
\end{equation}
then $u(x)=U(r)$ solves (\ref{hessian}). Here $C_{n-1}^{k-1}$
and $C_{n-1}^k$ are combinatorial constants,
and $R(r)$ is a double bounded function.

We search the radial solution as the form
\begin{equation} \label{radial}
u(x)=U(r)=-(1+r^2)^{-\theta}, \quad r=|x|,
\end{equation}
where $\theta>0$ will be determined later.

By a direct calculation, we get
$$
U_r=\frac{2\theta r}{(1+r^2)^{\theta+1}}, \quad
U_{rr}=\frac{2\theta}{(1+r^2)^{\theta+1}}
\left[1-2(\theta+1)\frac{r^2}{1+r^2}\right].
$$
Thus, the left hand side of (\ref{ode})
\begin{equation} \label{yinan}
\begin{array}{ll}
&\quad \displaystyle C_{n-1}^{k-1}(\frac{U_r}{r})^{k-1}U_{rr}+C_{n-1}^k(\frac{U_r}{r})^k\\[3mm]
&=\displaystyle\left[\frac{2\theta}{(1+r^2)^{\theta+1}}\right]^k
\left[\frac{C_{n-1}^{k-1}+C_{n-1}^k
+[C_{n-1}^k-C_{n-1}^{k-1}(2\theta+1)]r^2}{1+r^2}\right].
\end{array}
\end{equation}

In view of (\ref{SERRIN}), it follows
$$
\frac{2k}{q-k}<\frac{n-2k}{k}.
$$
We next determine that the decay rate $2\theta$ is either the fast rate $\frac{n-2k}{k}$
or the slow rate $\frac{2k}{q-k}$.

In fact, if $C_{n-1}^k-C_{n-1}^{k-1}(2\theta+1)>0$, then $2\theta<\frac{n-2k}{k}$.
We choose $\theta$ such that $2\theta=\frac{2k}{q-k}$, and from (\ref{yinan}) we can
see that
$$
C_{n-1}^{k-1}(\frac{U_r}{r})^{k-1}U_{rr}+C_{n-1}^k(\frac{U_r}{r})^k
=R(r)(1+r^2)^{-(\theta+1)k}=R(r)(1+r^2)^{-\theta q}.
$$
This result shows that $U(r)$ as the form (\ref{radial}) with $2\theta=\frac{k+1}{q-k}$
solves (\ref{ode}).

In addition, if $C_{n-1}^k-C_{n-1}^{k-1}(2\theta+1)=0$, then $2\theta=\frac{n-2k}{k}$.
Let $q=\frac{(n+2)k}{n-2k}$. Thus, from (\ref{yinan})
we can also deduce
\begin{equation} \label{express}
C_{n-1}^{k-1}(U_r)^{k-1}U_{rr}+C_{n-1}^kr^{-1}(U_r)^k
=R(r)(1+r^2)^{-(\theta+1)k-1}=R(r)(1+r^2)^{-\theta q},
\end{equation}
and hence $U(r)$ as the form (\ref{radial}) with $2\theta=\frac{n-2k}{k}$
also solves (\ref{ode}).
\end{proof}

Come back the equation (\ref{hesse}). We consider the critical and the supercritical cases.

\begin{theorem} \label{th4.6}
Under the scaling transformation (\ref{scal}), the equation
(\ref{hesse}) and the energy functional $\|(-u)\|_{L^{q+1}(R^n)}$
are invariant if and only if $q=\frac{(n+2)k}{n-2k}$.
\end{theorem}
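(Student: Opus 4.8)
The plan is to mimic the scaling arguments already carried out for (\ref{el2}), (\ref{hsel}) and (\ref{CKN}): I compute separately the exponent of $\mu$ that the invariance of the equation forces on $\sigma$ and the one that the invariance of the energy forces on $\sigma$, and then eliminate $\sigma$. The one new ingredient compared with the semilinear cases is that $F_k$ is homogeneous of degree $k$, not of degree $1$, in the Hessian, and I expect this factor of $k$ to drive the whole computation.

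First I would record how the Hessian transforms under (\ref{scal}). Writing $u_\mu(x)=\mu^\sigma u(\mu x)$ and setting $y=\mu x$, the chain rule gives $D^2u_\mu(x)=\mu^{\sigma+2}(D^2u)(y)$, so $\lambda(D^2u_\mu(x))=\mu^{\sigma+2}\lambda(D^2u)(y)$. Since $S_k$ is a homogeneous polynomial of degree $k$ in $\lambda$, this yields
\[
F_k(D^2u_\mu(x))=\mu^{k(\sigma+2)}F_k(D^2u)(y).
\]
On the other hand $(-u_\mu(x))^q=\mu^{q\sigma}(-u(y))^q$. Because $u$ solves (\ref{hesse}), i.e. $F_k(D^2u)(y)=(-u(y))^q$, the function $u_\mu$ solves (\ref{hesse}) exactly when the two powers of $\mu$ agree, that is $k(\sigma+2)=q\sigma$, equivalently $\sigma=\frac{2k}{q-k}$. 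Here (\ref{SERRIN}) guarantees $q>\frac{nk}{n-2k}>k$, so this $\sigma$ is well defined and positive and the scaling (\ref{scal}) is admissible.

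Next I would impose invariance of the energy. Exactly as in (\ref{energy}), the substitution $y=\mu x$ gives
\[
\int_{R^n}(-u_\mu(x))^{q+1}dx=\mu^{\sigma(q+1)-n}\int_{R^n}(-u(y))^{q+1}dy,
\]
so $\|(-u_\mu)\|_{L^{q+1}(R^n)}=\|(-u)\|_{L^{q+1}(R^n)}$ for all $\mu>0$ if and only if $\sigma(q+1)-n=0$, i.e. $\sigma=\frac{n}{q+1}$. Eliminating $\sigma$ between $\frac{2k}{q-k}$ and $\frac{n}{q+1}$ gives $2k(q+1)=n(q-k)$, hence $q(n-2k)=k(n+2)$, which is precisely $q=\frac{(n+2)k}{n-2k}$. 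This settles the necessity direction.

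For the converse I would simply read the same equalities backward: if $q=\frac{(n+2)k}{n-2k}$ then $\frac{2k}{q-k}=\frac{n}{q+1}=\frac{n-2k}{k+1}=:\sigma$, and with this single choice of $\sigma$ the two displayed scaling identities show simultaneously that $u_\mu$ solves (\ref{hesse}) and that the energy is preserved. I do not anticipate any real obstacle; the only point deserving care is the degree-$k$ homogeneity of $S_k$ used in the first display, which is what replaces the trivial linearity of $-\Delta$ in the earlier theorems.
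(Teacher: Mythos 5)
Your proof is correct and follows essentially the same route as the paper: derive $\sigma=\frac{2k}{q-k}$ from invariance of the equation (using the degree-$k$ homogeneity of $S_k$), derive $\sigma=\frac{n}{q+1}$ from invariance of the energy, and eliminate $\sigma$ to get $q=\frac{(n+2)k}{n-2k}$, with the converse read off from the same identities. Your sign bookkeeping in the exponents of $\mu$ is in fact cleaner than the paper's (which carries apparent typographical sign errors but reaches the same constraints), and your remark that (\ref{SERRIN}) gives $q>k$ so that $\sigma$ is well defined is a harmless, correct addition.
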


\begin{proof}
Clearly, if we denote $\mu x$ by $y$, then
$$
F_k(D^2u_\mu(x))=\mu^{-k(\sigma+2)}F_k(D^2u(y))=\mu^{-k(\sigma+2)}(-u)^q(y)
=\mu^{q\sigma-k(\sigma+2)}(-u_\mu)^q(x)
$$
implies $\sigma=\frac{2k}{q-k}$. On the other hand,
$
\int_{R^n}(-u_\mu)^{q+1}dx=\mu^{(q+1)\sigma-n}\int_{R^n}(-u)^{q+1}dx
$
implies $\sigma=\frac{n}{q+1}$. Eliminating $\sigma$ yields $q=\frac{(n+2)k}{n-2k}$.
The necessity is complete.

On the contrary, the argument above still works for the sufficiency.
\end{proof}

The following result shows that (\ref{hesse}) has no finite energy solution
when $q$ is not the critical exponent.

\begin{theorem} \label{th4.5}
Eq. (\ref{hesse}) has negative solution $u$ belonging to $C^2(R^n)\cap
L^{q+1}(R^n)$ if and only if $q=\frac{(n+2)k}{n-2k}$.
\end{theorem}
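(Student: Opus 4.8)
The plan is to prove the two implications separately, pinning down the critical exponent as the unique value that reconciles the two competing homogeneities of the $k$-Hessian equation already exhibited in Theorem \ref{th4.6}.

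For sufficiency I would not build a new solution but reuse the radial profile produced in the critical case of Theorem \ref{th4.12}. When $q=\frac{(n+2)k}{n-2k}$ and $2\theta=\frac{n-2k}{k}$, the coefficient $C_{n-1}^k-C_{n-1}^{k-1}(2\theta+1)$ vanishes, so by (\ref{yinan}) the left-hand side of (\ref{ode}) collapses to a constant multiple of $(1+r^2)^{-(\theta+1)k-1}$. A short check shows $(\theta+1)k+1=\theta q$ for this pair $(\theta,q)$, so the weight $R(r)$ is in fact \emph{constant}; after rescaling the amplitude, $u(x)=-c(1+|x|^2)^{-(n-2k)/(2k)}$ solves the unweighted equation (\ref{hesse}) for a suitable $c>0$, and it is manifestly in $C^2(R^n)$ and negative. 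Finite energy then follows from $(-u)^{q+1}\sim(1+|x|^2)^{-\theta(q+1)}$ together with $2\theta(q+1)=\frac{n(k+1)}{k}>n$, giving $u\in L^{q+1}(R^n)$.

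For necessity I would exploit the variational structure of the $k$-Hessian operator. Writing $\mathcal{E}_k(u)=\int_{R^n}(-u)F_k(D^2u)\,dx$ and $\mathcal{P}(u)=\int_{R^n}(-u)^{q+1}\,dx$, and using the divergence-free Newton tensor $S_k^{ij}=\partial S_k/\partial u_{ij}$ together with Euler's relation $S_k^{ij}u_{ij}=kF_k(D^2u)$, one integrates by parts twice to see that the first variation of $\mathcal{E}_k$ is $-(k+1)F_k(D^2u)$, so that (\ref{hesse}) is the Euler--Lagrange equation of
\[
E(u)=\frac{1}{k+1}\mathcal{E}_k(u)-\frac{1}{q+1}\mathcal{P}(u).
\]
Under the dilation $u_\mu(x)=u(x/\mu)$ one has $\mathcal{E}_k(u_\mu)=\mu^{n-2k}\mathcal{E}_k(u)$ and $\mathcal{P}(u_\mu)=\mu^n\mathcal{P}(u)$, whence the Pohozaev identity $\frac{d}{d\mu}E(u_\mu)|_{\mu=1}=0$ yields $\frac{n-2k}{k+1}\mathcal{E}_k(u)=\frac{n}{q+1}\mathcal{P}(u)$. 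Testing (\ref{hesse}) by $(-u)$ gives the Nehari identity $\mathcal{E}_k(u)=\mathcal{P}(u)$, and eliminating the two energies forces $(n-2k)(q+1)=n(k+1)$, i.e. $q=\frac{(n+2)k}{n-2k}$.

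The main obstacle is the rigorous justification of both identities, since they rest on integration by parts over all of $R^n$ for an operator nonlinear in $D^2u$: the boundary terms generated by the $k$-Hessian involve $S_k^{ij}$ and hence products of second derivatives, and are more delicate than the $|\nabla u|^p$ terms controlled in Theorem \ref{th4.2}, while the sole a priori hypothesis is $u\in L^{q+1}(R^n)$. I would therefore run the computation on $B_R(0)$, use the equation and interior $C^2$ estimates to upgrade the finite-energy hypothesis into decay of $u$ and its derivatives, and then pick a sequence $R=R_j\to\infty$ along which every boundary integral vanishes, in the spirit of (\ref{changwei}) and (\ref{xiuli3}); guaranteeing that $\mathcal{E}_k(u)$ is finite and that the variational identity for $\mathcal{E}_k$ holds globally is the technical heart of the argument.
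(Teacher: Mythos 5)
Your proposal matches the paper's proof in both directions: sufficiency via the explicit radial profile of Theorem \ref{th4.12}, observing that in the critical case the weight $R(r)$ in (\ref{yinan}) is the constant $C_*=(\frac{n-2k}{k})^k(C_{n-1}^{k-1}+C_{n-1}^k)$ and rescaling the amplitude; necessity by combining the Pohozaev identity $\frac{n-2k}{k+1}\int_{R^n}(-u)F_k(D^2u)\,dx=\frac{n}{q+1}\int_{R^n}(-u)^{q+1}dx$ (from the scaling of the functional $E_k$) with the Nehari-type identity obtained by testing the equation with $-u$. Your closing remarks on justifying the integrations by parts (cut-off $\zeta_R$, decay along a sequence $R_j\to\infty$) are exactly the step the paper carries out, if rather tersely, so the argument is correct and essentially identical to the paper's.
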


\begin{proof}
{\it Sufficiency.}
If $q=\frac{(n+2)k}{n-2k}$, Theorem \ref{th4.12} implies
(\ref{ode}) has a radial solution as the form (\ref{radial}) with
the fast decay rate $2\theta=\frac{n-2k}{k}$. In addition, (\ref{yinan})
shows that $R(r)$ is a constant
$$
R(x)=C_*:=(\frac{n-2k}{k})^k(C_{n-1}^{k-1}+C_{n-1}^k).
$$
Thus, setting
$$
L:= C_*^{\frac{1}{q-k}}, \quad and \quad
V(r):=LU(r)=-L(1+r^2)^{-\frac{n-2k}{2k}},
$$
we know that $v(x)=V(|x|)$ is a radial solution of (\ref{hesse})
in $C^2(R^n)\cap L^{q+1}(R^n)$.

{\it Necessity.}
Multiply (\ref{hesse}) by $-u\zeta_R$ and integrate on
$B_{2R}(0)$. Noting $u \in L^{q+1}(R^n)$ and then letting
$R \to \infty$, we obtain $uF_k(D^2u) \in L^1(R^n)$, and hence
\begin{equation} \label{fangyu}
-\int_{R^n}uF_k(D^2u)dx=\int_{R^n}(-u)^{q+1}dx.
\end{equation}
In addition, (\ref{hesse}) is the Euler-Lagrange equation for the functional
$$
E_k(u)=\frac{-1}{k+1}\int_{R^n}uF_k(D^2u)dx-\int_{R^n}\frac{(-u)^{q+1}}{q+1}dx.
$$
By virtue of
$$
E_k(u(\frac{x}{\mu}))=\frac{-\mu^{n-2k}}{k+1}\int_{R^n}uF_k(D^2u)dx
-\frac{\mu^n}{q+1}\int_{R^n}(-u)^{q+1}dx,
$$
the Pohozaev identity $\frac{d}{d\mu}E_k(u(\frac{x}{\mu}))|_{\mu=1}=0$
shows
$$
\frac{n-2k}{k+1}\int_{R^n}uF_k(D^2u)dx
+\frac{n}{q+1}\int_{R^n}(-u)^{q+1}dx=0.
$$
Inserting (\ref{fangyu}) into this result, we obtain
$\frac{n-2k}{k+1}=\frac{n}{q+1}$, which implies
$q=\frac{(n+2)k}{n-2k}$.
\end{proof}

Theorem \ref{th4.5} shows (\ref{hesse}) has a radial
solution with fast decay rate $\frac{n-2k}{k}$ when
$q$ is a critical exponent.

In the supercritical case,
Theorem \ref{th4.5} implies that the solution of (\ref{hesse})
is not the finite energy solution, and hence the decay rate
should be slower than $\frac{n-2k}{k}$.

\begin{theorem} \label{th4.10}
If $q>\frac{(n+2)k}{n-2k}$,
then (\ref{hesse}) has radial
solutions with slow decay rate $\frac{2k}{q-k}$.
\end{theorem}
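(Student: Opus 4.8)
The plan is to reduce (\ref{hesse}) to its radial form and then to read off the slow rate from the scale-invariant profile. A radial function $u(x)=U(r)$ solves (\ref{hesse}) exactly when $U$ solves (\ref{ode}) with $R\equiv 1$, that is
\[
C_{n-1}^{k-1}\big(\tfrac{U_r}{r}\big)^{k-1}U_{rr}+C_{n-1}^k\big(\tfrac{U_r}{r}\big)^k=(-U)^q,\qquad U<0 .
\]
By Theorem \ref{th4.6} this equation is invariant under $U(r)\mapsto\mu^{\sigma}U(\mu r)$ with $\sigma=\tfrac{2k}{q-k}$, so I would first look for the fixed point of this scaling, namely the homogeneous profile $U(r)=-A\,r^{-\gamma}$ with $A>0$ and $\gamma=\tfrac{2k}{q-k}$, which is precisely the claimed slow rate.

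First I would substitute this ansatz. From $U_r=A\gamma r^{-\gamma-1}$, $U_{rr}=-A\gamma(\gamma+1)r^{-\gamma-2}$ and $\tfrac{U_r}{r}=A\gamma r^{-\gamma-2}$, the left-hand side collapses to a single power of $r$,
\[
(A\gamma)^k\big[\,C_{n-1}^k-C_{n-1}^{k-1}(\gamma+1)\,\big]\,r^{-(\gamma+2)k},
\]
while the right-hand side equals $A^q r^{-\gamma q}$. Matching the powers of $r$ gives $(\gamma+2)k=\gamma q$, i.e. exactly $\gamma=\tfrac{2k}{q-k}$, so the decay exponent is forced with no freedom, and matching the constants leaves the single algebraic relation $A^{\,q-k}=\gamma^k\big[\,C_{n-1}^k-C_{n-1}^{k-1}(\gamma+1)\,\big]$.

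The only role of the growth hypothesis is to force the bracket to be positive, so that this relation has a positive root. Using $C_{n-1}^k=C_{n-1}^{k-1}\tfrac{n-k}{k}$, the bracket is positive if and only if $\gamma<\tfrac{n-2k}{k}$, that is if and only if $q>\tfrac{nk}{n-2k}$; this is the inequality $\tfrac{2k}{q-k}<\tfrac{n-2k}{k}$ already recorded in the proof of Theorem \ref{th4.12}, and it holds a fortiori under the supercritical assumption $q>\tfrac{(n+2)k}{n-2k}$. Hence there is a unique admissible $A>0$, and $U(r)=-A\,r^{-\gamma}$ is a negative radial solution of (\ref{hesse}) on $R^n\setminus\{0\}$ with slow decay rate $\tfrac{2k}{q-k}$. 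Moreover, because $\gamma<\tfrac{n-2k}{k}$, one checks that $S_j(\lambda)>0$ for every $j\le k$, where $\lambda=(U_{rr},\tfrac{U_r}{r},\dots,\tfrac{U_r}{r})$ are the eigenvalues of $D^2u$; thus $D^2u$ stays in the admissible cone $\Gamma_k$ and $F_k$ is evaluated in the elliptic regime. Finally, the scaling $U\mapsto\mu^{\gamma}U(\mu\,\cdot\,)$ turns this single profile into the asserted one-parameter family of radial solutions.

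If in addition one demands a solution regular at the origin, I would instead run a shooting argument on the radial ODE: impose $U(0)=-\beta<0$ and $U_r(0)=0$, note that letting $r\to0$ forces $[\,C_{n-1}^{k-1}+C_{n-1}^k\,]\,U_{rr}(0)^k=\beta^q$ and hence $U_{rr}(0)>0$, solve the initial value problem (degenerate at $r=0$) locally by a contraction argument, and continue it as long as $U<0$ and the solution remains $k$-admissible. The genuinely hard step, and the main obstacle, is the asymptotics: one must prove that this regular trajectory decays at the slow rate $r^{-\gamma}$ rather than at the fast rate $r^{-(n-2k)/k}$. The natural device is the Emden--Fowler change of variables $U(r)=-r^{-\gamma}v(\ln r)$, which renders the equation autonomous and identifies the constant state $v\equiv A$ with the homogeneous solution constructed above; a phase-plane or Lyapunov analysis would then show that the regular trajectory is attracted to $v\equiv A$. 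The slow rate must be selected because Theorem \ref{th4.5} already excludes the fast-decay, finite energy alternative in the supercritical range; making this selection rigorous is where the real work lies, while the explicit profile above already pins down both the exponent and the limiting constant.
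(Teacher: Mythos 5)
Your homogeneous profile $U(r)=-A\,r^{-2k/(q-k)}$ is computed correctly (its constant agrees with the paper's $C_A$, and your positivity condition is the right one), but it does not prove the theorem: this profile blows up at the origin, so it solves the equation only on $R^n\setminus\{0\}$, whereas (\ref{hesse}) is posed in all of $R^n$ and the theorem asks for entire radial solutions (the paper produces bounded ones, regular at the origin). You acknowledge this and fall back on a shooting argument, but you explicitly leave unproven the step that in that approach \emph{is} the theorem, namely that the regular trajectory is attracted to the slow-decay profile rather than exhibiting some other behavior; the appeal to Theorem \ref{th4.5} only excludes the fast-decay (finite energy) alternative, not intermediate or non-power-law asymptotics. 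A telling symptom of the gap: your construction uses only the Serrin condition $q>\frac{nk}{n-2k}$, so the singular solution you build exists equally well in the critical and supercritical ranges — the supercritical hypothesis $q>\frac{(n+2)k}{n-2k}$ never genuinely enters, while in the paper it is exactly what makes the entire solution possible. (A smaller point: scaling cannot turn your profile into a one-parameter family, since the homogeneous solution is precisely the fixed point of the scaling, $\mu^{\gamma}U(\mu r)=U(r)$.)

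The paper's proof gets the entire solution without any dynamical-systems analysis, using your profile as an ingredient rather than as the answer. It rewrites the radial ODE in the integrated form (\ref{kaoshi}); shows that the shooting trajectory with $f(0)=-A$, $f'(0)=0$ can never vanish at a finite radius, because otherwise $u=f(|x|)$ would solve the Dirichlet problem $F_k(D^2u)=(-u)^q$, $u|_{\partial B}=0$ on a ball, contradicting Tso's supercritical nonexistence theorem — this is where $q>\frac{(n+2)k}{n-2k}$ is used, and it guarantees an entire negative solution for every $A$; and then tunes the shooting parameter so that at $r=1$ the trajectory matches the value $-C_A$ of the exact power-law solution, continuing the solution for $r\geq 1$ by that power law. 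The glued function is regular at the origin and has exactly the decay rate $\frac{2k}{q-k}$, with no asymptotic analysis needed because beyond $r=1$ the solution coincides with the homogeneous profile. To complete your route you would either have to carry out the Emden--Fowler phase-plane attraction argument you sketch, or adopt the paper's matching device.
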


\begin{proof}
Consider the problem
\begin{equation} \label{odes}
\left \{
\begin{array}{l}
\frac{1}{k}C_{n-1}^{k-1}[(f_r)^k r^{n-k}]_r=r^{n-1} (-f)^q, \quad r>0,\\[3mm]
f(r)<0 \quad and~ f'(r)>0, \quad r>0,\\[3mm]
f(0)=-A, \quad f'(0)=0,
\end{array}
   \right.
\end{equation}
where $A>0$ will be determined later. We prove that this problem has
a bounded entire solution $f(r)$. Then $u(x)=f(|x|)$ is a solution of (\ref{hesse}).

Let $t>0$. Integrating (\ref{odes}) from $0$ to $t$ yields
$$
f_r(t)=(\frac{1}{k}C_{n-1}^{k-1})^{-\frac{1}{k}}
[t^{k-n}\int_0^t r^{n}(-f(r))^q\frac{dr}{r}]^{\frac{1}{k}}.
$$
For $s \geq 0$, integrating from $s$ to $R$ again, we get
\begin{equation}\label{kaoshi}
f(R)=f(s)+(\frac{1}{k}C_{n-1}^{k-1})^{-\frac{1}{k}} \int_s^R
[t^{k-n}\int_0^t r^{n}(-f(r))^q\frac{dr}{r}]^{\frac{1}{k}}dt.
\end{equation}

We claim that $\inf_{r \geq 0}[-f(r)]=0$ as long as $f$ is an entire
positive solution of (\ref{odes}). Otherwise, there exists $c_*>0$
such that $-f(r) \geq c_*$ for $r \geq 0$. Therefore, (\ref{kaoshi})
with $s=0$ shows
$$
f(R) \geq (\frac{kc_*^q}{C_{n-1}^{k-1}})^{\frac{1}{k}}\frac{R^2}{2}-A.
$$
Thus, we can find some $R_A$ such that $f(R_A)=0$. This shows
that (\ref{odes}) has no entire positive solution.

By virtue of $f'(r)>0$ and $\inf_{r \geq 0}[-f(r)]=0$,
we know that the global solution $f(r)$
should be bounded and increasing to zero when $r \to \infty$.
In the following, we construct the function $f(r)$ with the slow decay rate
$\frac{2k}{q-k}$.

First, (\ref{odes}) admits a local negative solution by the standard argument.
Namely, for each $A>0$, we can find $R^A>0$ such that the solution $f(r)<0$ as
$r \in (0,R^A)$. We claim $f(R^A)<0$.

Otherwise, $f(R^A)=0$. Thus, $f(r)$ solves the two point boundary value problem
$$
\left \{
\begin{array}{l}
\frac{1}{k}C_{n-1}^{k-1}[(f_r)^k r^{n-k}]_r=r^{n-1} (-f)^q, \quad r\in (0,R^A),\\
f(0)=-A, \quad f(R^A)=0.
\end{array}
   \right.
$$
Namely, $u(x)=f(|x|)$ is a classical solution of
$$
\left \{
\begin{array}{l}
F_k(D^2u)=(-u)^q \quad on \quad B_{R^A}(0),\\
u|_{\partial B_{R^A}(0)}=0, \quad u<0 \quad on ~~B_{R^A}(0).
\end{array}
   \right.
$$
It contradicts with the nonexistence result in the supercritical case (cf.
\cite{Tso1}).

Thus, we can extend the solution towards right in succession and hence
obtain an entire solution for each $A$.

Next, by the shooting method, it is easy to find suitable $A>0$ such that (\ref{odes}) has
solution $f_A(r)<0$ on $[0,1]$ satisfying $f_A(0)=-A$ and $f_A(1)=-C_A$. Here
$$
C_A=[\frac{C_{n-1}^{k-1}}{k}(n-\frac{2qk}{q-k})]^{\frac{1}{q-k}}
(\frac{q-k}{2k})^{\frac{k}{k-q}}.
$$
By virtue of (\ref{SERRIN}), we see $C_A>0$.

Finally, we claim that
$
f(R)=-C_AR^{-\frac{2k}{q-k}}
$
solves (\ref{odes}) as $r>1$.
It is sufficient to verify (\ref{kaoshi}) with $s=1$:
$$
f(R)=f(1)+(\frac{1}{k}C_{n-1}^{k-1})^{-\frac{1}{k}} \int_1^R
[t^{k-n}\int_0^t r^{n}(-f(r))^q\frac{dr}{r}]^{\frac{1}{k}}dt.
$$

In fact, (\ref{SERRIN}) shows that $n-\frac{2qk}{q-k}>0$.
By a calculation, we get the right hand side of the integral equation
above
$$\begin{array}{ll}
&\quad (\displaystyle\frac{kC_A^q}{C_{n-1}^{k-1}})^{\frac{1}{k}}
\displaystyle\int_1^R(t^{k-n}\int_0^t
s^{n-\frac{2qk}{q-k}}\frac{ds}{s})^{\frac{1}{k}}dt-C_A\\[3mm]
&=(\displaystyle\frac{kC_A^q}{C_{n-1}^{k-1}})^{\frac{1}{k}}
(n-\frac{2qk}{q-k})^{-\frac{1}{k}}
\int_1^R t^{2-\frac{2q}{q-k}}\frac{dt}{t}-C_A\\[3mm]
&=(\displaystyle\frac{kC_A^q}{C_{n-1}^{k-1}})^{\frac{1}{k}}
(n-\frac{2qk}{q-k})^{-\frac{1}{k}}\frac{q-k}{2k}
(1-R^{-\frac{2k}{q-k}})-C_A\\[3mm]
&=-C_AR^{-\frac{2k}{q-k}}.
\end{array}
$$
It is the left hand side.

Thus, we find a radial solution of (\ref{hesse}) in the supercritical case
$$
u(x)=
\left \{
\begin{array}{lll}
&f_A(|x|), \quad &for \quad r \in [0,1];\\
&-C_A|x|^{-\frac{2k}{q-k}}, \quad &for \quad r \geq 1.
\end{array}
   \right.
$$
Theorem \ref{th4.10} is proved.
\end{proof}

\paragraph{Acknowledgements} The author is very grateful to
Dr. Xingdong Tang for providing the reference \cite{IMN},
and many fruitful discussions.


\end{document}